\documentclass{article}
\usepackage{amsmath}
\usepackage{amssymb}
\usepackage{titlesec}
\usepackage{graphicx}
\usepackage{fancyhdr}
\usepackage{latexsym}
\usepackage{mathrsfs}
\usepackage{booktabs}
\usepackage{mathrsfs}
\usepackage{mathtools}
\usepackage{amsthm}
\usepackage{wasysym}
\usepackage{cite}
\usepackage{color}
\usepackage{amsfonts}
\usepackage{amsmath,amssymb}
\usepackage{graphics}
\usepackage{multimedia}
\usepackage{graphics}
\usepackage{cite}
\usepackage{latexsym}
\usepackage{amsmath}
\usepackage{amssymb}
\usepackage[dvips]{epsfig}
\usepackage{amscd}
\usepackage{amsmath, amsthm}
\numberwithin{equation}{section}
\newtheorem{definition}{Definition}[section]
\newtheorem{theorem}{Theorem}[section]
\newtheorem{lemma}{Lemma}[section]
\newtheorem{remark}{Remark}[section]
\newtheorem{proposition}{Proposition}[section] 

	\newtheorem{corollary}{Corollary}[section]

\title{Global strong solutions with large initial data for the Cauchy problem of the multi-dimensional compressible Navier-Stokes-Korteweg system}
\author{Xiangdi H{\small UANG}$^{a}$\thanks{Email addresses: xdhuang@amss.ac.cn (X. D. Huang), guyongteng@amss.ac.cn (Y. T. Gu), leimuxi25@mails.ucas.ac.cn (M. X. Lei). }, Yongteng G{\small U}$^{a}$, Muxi L{\small EI}$^{a}$  \\  
	{\normalsize a. State Key Laboratory of Mathematical Sciences, Academy of Mathematics and Systems Sciences,}\\
	{\normalsize Chinese Academy of Sciences, Beijing 100190, China;}\\
}
\date{}

\begin{document}
\maketitle
\begin{abstract}
In this paper, we establish global strong solutions for arbitrarily large initial data to the multi-dimensional compressible Navier-Stokes–Korteweg system, also referred to as the quantum Navier–Stokes equations, originally derived by Dunn and Serrin [Arch. Ration. Mech. Anal. 88(2):95–133, 1985]. Specifically, we prove the existence of global strong solutions for arbitrarily large initial data in the case $N=2$ when $\gamma \ge 1$, and $N=3$ with $1 \le \gamma < 8/3$ for the associated Cauchy problem. By employing techniques from Littlewood–Paley theory, range truncation analysis, refined Nash-Moser and De Giorgi iteration methods, we derive positive upper and lower bounds for the density. As a consequence, we are able to treat the whole-space case with strictly positive far-field density. To the best of our knowledge, this is the first result that establishes global strong solutions for physically relevant compressible Navier–Stokes equations in the whole space, without imposing any symmetry or special geometric assumptions on the initial data.
\end{abstract}
\bigskip
\noindent \textbf{Keywords:} compressible Navier-Stokes-Korteweg system; quantum Navier-Stokes system; global large strong solutions; Nash-Moser iteration; De Giorgi iteration; Cauchy problem.

\smallskip
\noindent \textbf{Mathematics Subject Classifications (2020):} 35D35; 35Q30; 35Q35; 35Q40; 76N10.
\section{Introduction}
In this paper, we are concerned with the global well-posedness of the compressible Navier-Stokes-Korteweg equations in the whole space $\mathbb{R}^N$. This system describes the motion of a viscous compressible fluid endowed with internal capillarity and reads as follows
\begin{equation}\label{NSK} \begin{cases} \partial_t \rho + \nabla \cdot(\rho u) = 0, 
\\ \partial_t (\rho u) + \nabla \cdot(\rho u \otimes u) + \nabla P(\rho) =\nabla \cdot \mathbb{S} + \nabla \cdot \mathbb{K}. \end{cases} \end{equation}
The viscous stress tensor $\mathbb{S}$ is given by Newton's rheological law
\begin{equation}
    \mathbb{S} = 2\mu(\rho) \mathcal{D}(u) + \lambda(\rho) \nabla \cdot u \mathbb{I},
\end{equation}
where $\mathcal{D}(u) = \frac{1}{2}(\nabla u + \nabla u^T)$ denotes the symmetric deformation tensor, and $\mathbb{I}$ is the identity matrix in $\mathbb{R}^N$. The scalar functions $\mu(\rho)$ and $\lambda(\rho)$ represent the shear and bulk viscosity coefficients, respectively, which satisfy the physical condition $\mu(\rho) > 0$ and $2\mu(\rho) + N\lambda(\rho) \ge 0$.
In order to ensure the stability of the system and utilize the BD entropy structure, we assume the viscosity coefficients satisfy the relation $$\lambda(\rho) = 2(\rho \mu'(\rho) - \mu(\rho)).$$
In the context of the Korteweg theory, the capillarity tensor $\nabla \cdot\mathbb{K}$ is typically defined as
\begin{equation}
    \nabla \cdot \mathbb{K} = \nabla \left( \rho \kappa(\rho) \Delta \rho + \frac{\kappa(\rho) + \rho \kappa'(\rho)}{2} |\nabla \rho|^2 \right) - \nabla \cdot \left( \kappa(\rho) \nabla \rho \otimes \nabla \rho \right).
\end{equation}
where $\kappa(\rho) > 0$ is the coefficient of capillarity.\par
The study of capillary fluids originates from the pioneering work of Van der Waals and Korteweg \cite{Korteweg,J.F. Van derWaals}. In their theory of capillarity, the fluid energy is assumed to depend not only on the standard thermodynamic variables but also on the gradient of the density. This concept was later formalized in the modern language of continuum mechanics by Dunn and Serrin in the 1980s, leading to the so-called Korteweg-type models. \par
  When $\kappa(\rho) = 0$, $\mu=\rho$ and $\lambda=0$, system \eqref{NSK} reduces to the well-known viscous shallow water equations. 
Significant progress has been made for the viscous shallow water equations in recent years. Regarding general multi-dimensional initial data, significant breakthroughs concerning the global well-posedness of weak solutions were achieved by Li-Xin \cite{Li-Xin} and Vasseur-Yu \cite{Vasseur-Yu}. Independently, these authors established the global existence of weak solutions for the compressible system with $\mu(\rho) = \rho$ and $\lambda(\rho) = 0$, admitting arbitrarily large data and vacuum states. More specifically, they obtained global weak solutions for $\gamma \in (1, \infty)$ when $N=2$, and for $\gamma \in (1, 3)$ when $N=3$. It is worth noting that Li-Xin \cite{Li-Xin} extended their analysis to a wider class of viscosity coefficients satisfying the BD entropy relation. Concerning the global smooth solutions for multi-dimensional shallow water equations with arbitrarily large initial data, Huang-Meng-Zhang \cite{Huang-Meng-Zhang-V} pioneered the proof of global classical solutions for the two-dimensional initial-boundary value problem under the assumption of radial symmetry with $\gamma\ge\frac{3}{2}$. Subsequently, Gu-Huang \cite{Gu-Huang} extended the range of the exponent to $\gamma > 1$ and generalized the result to the three-dimensional case with $1 < \gamma < 3$. Concurrently, the global existence of large solutions for the associated Cauchy problem was proved independently by Chen-Zhang-Zhu \cite{Chen-Zhang-Zhu}. Moreover, Huang-Meng-Zhang \cite{Huang-Meng-Zhang-V} successfully proved the global classical well-posedness for general isentropic compressible Navier-Stokes equations satisfying the BD entropy condition in 2D and 3D, treating the shallow water model as a specific instance.\par
 When $\kappa(\rho) >0$, we review some related works regarding the well-posedness of the Navier-Stokes-Korteweg system with general viscosity coefficients. For the one-dimensional case, considering the system with specific density-dependent viscosity $\mu(\rho) = \rho$ and capillarity $\kappa(\rho) = \rho^{-1}$, Charve-Haspot \cite{Charve-Haspot} established the existence of global strong solutions in the whole space, allowing for large non-vacuum initial data. Furthermore, they demonstrated that these solutions converge to the entropic weak solutions of the compressible Euler equations. Moreover, Germain-LeFloch \cite{Germain-LeFloch} proved the global existence of finite energy weak solutions for the Cauchy problem with general density-dependent coefficients and demonstrated their convergence to the entropy solutions of the Euler system. For the case with power-law viscosity $\mu(\varrho) = \varrho^\alpha$ and capillarity $\kappa(\varrho) = \varrho^\beta$ satisfying specific conditions, Antonelli-Bresch-Spirito \cite{Antonelli-Bresch-Spirito} established the existence of global weak solutions for the periodic problem with large data. Furthermore, investigating the Cauchy problem in Lagrangian coordinates, Chen et al. \cite{Chen-Chai} obtained global classical solutions for large initial data away from vacuum, considering density-dependent viscosity and capillarity.  For the multi-dimensional case, building upon their earlier local theory \cite{Hattori-Li}, Hattori-Li \cite{Hattori-Li-2} established the global existence of solutions for the constant-coefficient Cauchy problem with small, non-vacuum initial data. For the case where both viscosity and capillarity coefficients depend on the density, Danchin-Desjardins \cite{Danchin_Desjardins} obtained global smooth solutions. Their result holds for small perturbations of a non-vacuum state in functional spaces that are critical with respect to the physical energy. Bresch and Desjardins \cite{B-D} investigated the two-dimensional viscous shallow water equations extended by a capillary term. They established the global existence of weak solutions in the presence of vacuum and demonstrated their convergence towards the strong solution of the viscous quasi-geostrophic system with a free surface. \par
However, the global existence of strong solutions with arbitrarily large initial data for the multi-dimensional Navier-Stokes-Korteweg system has long remained an open problem. It was not until recently that the first author Huang \cite{Huang-Meng-Zhang} in his newly preprint resolved this by establishing global strong solutions on the two- and three-dimensional periodic torus, provided that the initial density is bounded from above and below. However, extending their method to the whole space presents new difficulties. In this paper, we overcome these obstacles to establish the global existence of strong solutions for the multi-dimensional Navier-Stokes-Korteweg system with large initial data in the whole space. By employing the novel critical inequality established by the first author in his newly preprint \cite{Huang-Meng-Zhang}, we successfully establish the lower bound of the density by employing a novel De Giorgi iteration method. It is worth noting that our work does not require any radial symmetry assumption. To the best of our knowledge, this can be regarded as the first result concerning global large solutions for the corresponding Cauchy problem.
\par
Throughout the rest of this paper, we focus on the case of shallow-water viscosity coefficients
$$\mu(\rho) = \rho, \quad \lambda(\rho) = 0,$$
and we assume that $\kappa(\rho)$ satisfies 
$$\kappa(\rho)=\frac{1}{\rho}.$$
With the specific choice of the capillarity coefficient $\kappa(\rho)$, the divergence of the Korteweg tensor takes the following form
\begin{equation}
\nabla \cdot \mathbb{K} = \nabla \cdot (\rho \nabla \nabla \log \rho).
\end{equation}
Consequently, under the assumption of shallow-water viscosity ($\mu(\rho)=\rho, \lambda(\rho)=0$) and the specific capillarity coefficient $\kappa(\rho)=1/\rho$, equation \eqref{NSK} can be rewritten as
\begin{equation}\label{NSK_simplified} \begin{cases} \partial_t \rho + \nabla \cdot(\rho u) = 0, \\ \partial_t (\rho u) + \nabla \cdot(\rho u \otimes u) + \nabla P(\rho) = \nabla \cdot \left(2 \rho \mathcal{D}(u) \right) + \nabla \cdot (\rho \nabla \nabla \log \rho). \end{cases} \end{equation}
Now we investigate the global existence of strong solutions to system \eqref{NSK_simplified} in $\mathbb{R}^N$, where $N=2, 3$. The system is supplemented with the prescribed initial data $(\rho_0, u_0)$ satisfying
\begin{equation}
\rho(x, 0) = \rho_0(x), \quad u(x, 0) = u_0(x), \quad \text{for } x \in \mathbb{R}^N,
\end{equation}
with far field behavior 
\begin{equation}\label{far field}
    \rho \left( x,t \right) \rightarrow \bar{\rho}>0,\quad u\left( x,t \right) \rightarrow 0,\quad \mathrm{as}\, |x|\rightarrow \infty .
\end{equation}
We now state the main result on the global existence of strong solutions to the Cauchy problem for system \eqref{NSK_simplified}-\eqref{far field} with arbitrarily large initial data.
\begin{theorem}\label{thm:global_existence_cauchy}
Let $N=2$ or $N=3$. Assume that $\gamma$ satisfies
\begin{equation}
    \begin{cases}
        \gamma \in [1, \infty) & \text{if } N=2, \\
        \gamma \in [1, 8/3) & \text{if } N=3.
    \end{cases}
\end{equation}
Let $\bar{\rho} > 0$ be the far field behavior of density. Assume that the initial data $(\rho_0, u_0)$ satisfies
\begin{equation}
    0 < \underline{\varrho} \le \rho_0 \le \bar{\varrho}, \quad (\rho_0 - \bar{\rho}) \in H^3(\mathbb{R}^N), \quad u_0 \in H^2(\mathbb{R}^N),
\end{equation}
where $\underline{\varrho}$ and $\bar{\varrho}$ are positive constants. Then the Cauchy problem  \eqref{NSK_simplified}-\eqref{far field} admits a unique global strong solution $(\rho, u)$ satisfying for any $0 < T < \infty$ and $(x,t) \in \mathbb{R}^N \times [0, T]$,
\begin{equation}
    (C(T))^{-1} \le \rho(x,t) \le C(T),
\end{equation}
and
\begin{equation}
\begin{aligned}
    &(\rho - \bar{\rho}) \in C([0, T]; H^3) \cap L^2(0, T; H^4), \quad \rho_t \in C([0, T]; H^1) \cap L^2(0, T; H^2), \\
    &u \in C([0, T]; H^2) \cap L^2(0, T; H^3), \quad u_t \in L^\infty(0, T; L^2) \cap L^2(0, T; H^1),
\end{aligned}
\end{equation}
where the constant $C(T) > 0$ depends on the initial data and $T$.
\end{theorem}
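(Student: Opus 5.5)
The plan is the standard continuation argument. Local existence and uniqueness of a strong solution in the stated class follows from classical theory, since the system with $\rho$ bounded away from zero is a parabolic-dispersive system for $(\log\rho,u)$. It therefore suffices to prove a priori bounds on $[0,T]$ depending only on $T$ and the data: a lower bound $\rho\ge C(T)^{-1}$, an upper bound $\rho\le C(T)$, and control of $\|\rho-\bar\rho\|_{H^3}+\|u\|_{H^2}$. Once these are in hand, the higher regularity follows by energy estimates on the differentiated equations, and the solution extends to all times.

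The first step collects the structural estimates.

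\begin{itemize}
\item \textbf{Energy.} The energy identity controls $\sup_t\int \rho|u|^2+|\nabla\sqrt\rho|^2+G(\rho)$ and $\int_0^T\!\!\int \rho|\mathcal D u|^2$. Here $G$ is the relative potential with respect to $\bar\rho$, which is finite because of the far field condition.
\item \textbf{BD entropy.} With $v=u+\nabla\log\rho$, the BD entropy controls $\sup_t\int\rho|\nabla\log\rho|^2$ and $\int_0^T\!\!\int \rho|\nabla^2\log\rho|^2+P'(\rho)|\nabla\rho|^2/\rho$.
\item \textbf{Effective-velocity reformulation.} Because $\kappa=1/\rho$, the Korteweg term combines with the viscosity. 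Writing $w=u+c\nabla\log\rho$ with a suitable $c$ turns the momentum equation into a damped/parabolic equation for $w$. The continuity equation then becomes a parabolic equation for $\log\rho$:
\[
\partial_t\log\rho+u\cdot\nabla\log\rho-\Delta\log\rho=-\nabla\cdot w+|\nabla\log\rho|^2\ \text{(schematically)}.
\]
\end{itemize}

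The second step is the upper bound of $\rho$. I would obtain it by a Nash--Moser iteration on the $\rho$ (or $\log\rho$) equation. The inputs are the $L^\infty_tL^2$ and $L^2_tH^1$ bounds on $\nabla\sqrt\rho$ and $\rho^{\gamma/2}$ from the energy and BD entropy. The drift term is handled through a Littlewood--Paley decomposition of $u$: the low frequencies are bounded in $L^\infty$ by energy, and the high frequencies are controlled via the dissipation. The restriction $\gamma<8/3$ in 3D should enter here. The integrability of the pressure gained from the BD entropy, $\rho^{\gamma}\in L^{p}_{t,x}$, must beat the critical Sobolev exponent needed to close the iteration for the drift. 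I expect the range truncation ($\rho\ge\bar\rho$ vs.\ $\rho\le\bar\rho$, needed because $\rho-\bar\rho\notin L^1$) to be essential here. We only test with $(\rho-k)_+$ for $k>2\bar\rho$, whose support has finite measure by the energy bound on $G$.

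The third step, and the main obstacle, is the positive lower bound. I would run a De Giorgi iteration on $\phi=(\log(1/\rho)-k)_+$ for $k$ large, again of finite support measure because $G(\rho)$ controls $\{\rho<\bar\rho/2\}$. Now the singular nonlinearity $|\nabla\log\rho|^2$ has the bad sign for lower bounds. I would control it using the critical inequality of \cite{Huang-Meng-Zhang} for $\nabla\log\rho$, combined with the BD bound $\rho|\nabla^2\log\rho|^2\in L^1$ and the now-available upper bound. The drift $u\cdot\nabla\phi$ is handled by splitting $u=w-c\nabla\log\rho$: the gradient part merges into the nonlinearity, and $w$ is bounded via a higher-integrability estimate from its parabolic equation. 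Level-set energy inequalities with a fast geometric decay then give $\phi\equiv0$ for $k\ge k_0(T)$.

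The final step is the higher-order estimates. With $C(T)^{-1}\le\rho\le C(T)$, the system is uniformly parabolic for $(\log\rho,w)$. Standard $H^1$, then $H^2$, estimates on $u$ (Hoff-type via $\dot u$) and $H^3$ estimates on $\rho-\bar\rho$ close by Gronwall, with $L^\infty$ bounds on $\nabla u$ and $\nabla\rho$ obtained by Gagliardo--Nirenberg in $N\le3$. This yields the stated regularity, including the time-derivative bounds read off from the equations, and uniqueness follows by an $L^2$ stability estimate.
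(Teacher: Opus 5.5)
\textbf{Your lower-bound step is circular as written, and that step is the heart of the theorem.} Your De Giorgi scheme on $(\log(1/\rho)-k)_+$ needs the effective velocity (your $w$, the paper's $v=u+\nabla\log\rho$) in an unweighted space such as $L^\infty_{t,x}$ or $L^\infty_tL^s$ with $s>N$, because $v$ enters both as the drift and as the forcing $\nabla\cdot v$. Every bound the $v$-equation $\rho\partial_tv+\rho u\cdot\nabla v+\nabla P=\nabla\cdot(\rho\nabla v)$ yields is $\rho$-weighted ($\int\rho|v|^p$). That equation degenerates exactly on the level sets $\{\rho^{-1}>k\}$ you iterate on. Passing to unweighted norms costs powers of $\|\rho^{-1}\|_{L^\infty}$, the very quantity being bounded, and you neither quantify this nor explain how the resulting self-referential inequality closes.

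The missing idea is that this dependence can be made only $\sqrt{\log}$. The paper first shows $\sup_t\|\rho^{1/p}v\|_{L^p}\le C\sqrt p$ with $C$ independent of $p$, using the upper bound. It then runs a Moser iteration through the reverse H\"older inequality $\Psi(r(p+2))\lesssim V_T\bigl((p+2)^{2r}\Psi(p+2)^{r}+\dots\bigr)$. Here $\Psi(p)=\int_0^T\!\int\rho|v|^p$, $r=5/3$, $V_T=\|\rho^{-1}\|_{L^\infty_TL^\infty}+e^{25/9}$, and the iteration starts at exponent $r^l\sim\log V_T$. This gives $\|v\|_{L^\infty_TL^\infty}\le c_v(\log V_T)^{1/2}$.

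Only then does De Giorgi on the $\rho^{-1}$-equation close. The Gronwall factor $\exp(C\int_0^T\|v\|_{L^\infty}^2)$ becomes $V_T^{Cc_v^2T}$, a small power for short times. The first level $k_0=2\|\rho_0^{-1}\|_{L^\infty}$ lets $\mu^T(k_0)$ be controlled by $\|\sqrt\rho-\sqrt{\bar\rho}\|_{L^2}$. The result is $\|\rho^{-1}\|_{L^\infty_TL^\infty}\lesssim V_T^{c_v^2T/2}(\log V_T)^{5/4}+C$, which is sublinear in $V_T$ once $T\le 1/(2c_v^2)$. One then advances in time steps of this fixed length.

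The quadratic term is not the obstacle you describe. With drift $v$, $\ell=-\log\rho$ solves $\partial_t\ell-\Delta\ell+|\nabla\ell|^2+v\cdot\nabla\ell=\nabla\cdot v$, and $\rho^{-1}$ carries $+2\rho|\nabla\rho^{-1}|^2$ on the left. So the term has the good sign and is simply dropped. The Huang--Meng--Zhang critical inequality is precisely the $\sqrt{\log}$ bound on $\|v\|_{L^\infty}$, not an estimate on $\nabla\log\rho$. Your far-field truncation idea (finite measure of the level sets via the energy) is sound and matches the paper.

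\textbf{Your upper-bound step relies on bounds on $u$ that are not available, and it omits the key estimate.} Energy controls $\sqrt\rho\,u$, and the dissipation is $\int\rho|\mathcal D u|^2$. Before any lower bound on $\rho$ is known, neither gives $L^\infty$ control of the low frequencies of $u$ nor unweighted control of its high frequencies.

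The paper never uses $u$ here. It writes $\partial_t(\rho-\bar\rho)-\Delta(\rho-\bar\rho)=-\nabla\cdot(\rho v)$ and applies maximal regularity in $\mathcal{L}^\infty_TB^{3/(q+2)+\varepsilon'}_{q+2,1}\hookrightarrow L^\infty$. The source is controlled only through weighted quantities: $\|\rho v\|_{L^{q+2}}\le\|\rho\|_{L^\infty}^{1-\frac1{q+2}}\|\rho^{\frac1{q+2}}v\|_{L^{q+2}}$ and $\|\rho v\|_{L^2}\le\|\rho\|_{L^\infty}^{1/2}\|\sqrt\rho\, v\|_{L^2}$. Optimal interpolation then gives the sublinear bound $\|\rho\|_{L^\infty}\lesssim 1+\|\rho\|_{L^\infty}^{(2+\varepsilon')/3}$. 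A Moser or De Giorgi argument on this $\rho$-equation could also work, but only with the same input.

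That input is what your plan is missing: $\sup_t\|\rho^{1/(q+2)}v\|_{L^{q+2}}\le C$ for some $q>1$, i.e.\ $q+2>N=3$, obtained by testing the $v$-equation with $|v|^qv$. This is where $\gamma<8/3$ actually enters. The pressure term $\int_{\{\rho>4\bar\rho\}}\rho^{2\gamma-1}|v|^q$ closes under Gronwall, using $\nabla\sqrt\rho\in L^2L^6$ from the BD and J\"ungel bounds, provided $\gamma\le\frac{2q+6}{q+2}$. Since $\frac{2q+6}{q+2}$ decreases to $8/3$ as $q\downarrow1$, such a $q>1$ exists precisely when $\gamma<8/3$.
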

\begin{remark}
    For the sake of brevity, we restrict our proof to the three-dimensional case where $\gamma \in (1, \frac{8}{3})$. For the critical 3D case $(\gamma=1)$ and the 2D case $(\gamma \ge 1)$, although the density upper bound estimates for these cases were addressed in Haspot \cite{Haspot 1} and Yu-Wu \cite{Yu-Wu}, neither work derived the density lower bound. Consequently, they were unable to establish the global existence of solutions. We emphasize that the approach developed in this paper can be successfully applied to establish the density lower bound for the critical 3D case $(\gamma=1)$ and the 2D case $(\gamma \ge 1)$ as well. Therefore, we are able to obtain global solutions for both of these cases. It is worth noting that the technique used by Haspot \cite{Haspot 1} relies on the assumption that the estimate for $\|\rho^{\frac{1}{q+2}} v\|_{L^\infty_T L^{q+2}}$ is independent of $q$ to derive the density lower bound. Since such $q$-independence does not hold in the present context, Haspot's method is not applicable for securing a strictly positive lower bound.
\end{remark}
\begin{remark}
    Huang-Meng-Zhang \cite{Huang-Meng-Zhang} established this result for the periodic domain. In this work, we extend their findings to the more challenging whole space setting by introducing a new De Giorgi iteration method to effectively handle the far field behavior.   
\end{remark}
Provided that the density remains strictly positive, we define the effective velocity $v$ as
\begin{equation}
v = u + \nabla \log \rho.
\end{equation}
This transformation converts the original system \eqref{NSK}  into the following parabolic system
\begin{equation}\label{parabolic_system}
\begin{cases}
\partial_t \rho + \nabla \cdot(\rho v) - \Delta \rho = 0, \\
\rho \partial_t v + \rho u \cdot \nabla v + \nabla P(\rho) = \nabla \cdot(\rho \nabla v).
\end{cases}
\end{equation}
Consequently, the initial effective velocity $v_0$ is defined by
\begin{equation}\label{initial data}
v_0 = u_0 + \nabla \log \rho_0.
\end{equation}
Let us outline the main strategy of the proof. Our proof is organized into three main parts: establishing the density upper bound, deriving the density lower bound, and finally, improving the regularity of the solution.

In the first phase, we work within the framework of Besov spaces, utilizing maximal regularity estimates for parabolic equations to secure the upper bound. The second and most crucial step is establishing the density lower bound. Although Haspot \cite{Haspot 1} investigated the isothermal NSK system, his approach failed to yield a lower bound for the density. Our strategy draws inspiration from a key inequality introduced by the first author in \cite{Huang-Meng-Zhang}. However, the technique in \cite{Huang-Meng-Zhang} cannot be directly applied to the whole space problem with non-vacuum far-field conditions. To overcome this obstacle, we have developed a novel truncated De Giorgi iteration method.

We now proceed to outline the derivation of these density bounds.\\
\textbf{Upper bound of $\rho$.} \par
To derive the upper bound of the density, we fully exploit the parabolic structure of the system and employ maximal regularity estimates for the heat equation. Specifically, closing the density estimate requires bounding the norm $\|\rho^{1/(q+2)}v\|_{L^{q+2}}$ for some $q > 1$. In Proposition \ref{PRO3.4}, by means of a precise domain decomposition analysis, we established that for any $\gamma \in (1, \frac{8}{3})$, there exists a $q$ such that the norm $\|\rho^{\frac{1}{q+2}} v\|_{L^{q+2}}$ remains bounded.\\
\textbf{Lower bound of $\rho$.} \par
 To derive the lower bound of the density following Haspot\cite{Haspot 1}'s argument, it is essential to control the norm $\|\rho^{\frac{1}{p+2}} v\|_{L^\infty_T L^{p+2}}$ by a constant independent of $p$. However, the bound for $\|\rho^{\frac{1}{p+2}} v\|_{L^\infty_T L^{p+2}}$ tends to infinity as $p \to \infty$. To overcome this obstacle, Huang-Meng-Zhang \cite{Huang-Meng-Zhang} employed the Moser iteration method to establish a control relationship between $\|v\|_{L^\infty}$ and $\sqrt{\log (e^{\frac{25}{9}}+\|{\rho}^{-1}\|_{L^{\infty}})}$. This critical inequality plays a pivotal role in our analysis. Although their approach can handle the density lower bound in a periodic domain, it fails in the whole space when the density exhibits non-zero far-field behavior at infinity. To address the difficulties arising in this context, we introduce the De Giorgi iteration technique to prove the lower bound of the density by employing a truncation level adapted to the far-field density. Specifically, we construct the following truncation and iteration sequence
 \begin{equation}
\begin{split}
    \rho^{-1}_{(k_n)}&:=\max\{\rho^{-1}-{k_n},0\},
    \\
    U_{n}^{T}&:=\| \rho _{(k_n)}^{-1}\| _{L_{T}^{\infty}L^2}^2+\| \nabla \rho _{(k_n)}^{-1}\| _{L_{T}^{2}L^2}^2.
\end{split}
 \end{equation}
 where $k_n:=M\left( 1-2^{-n} \right)+2\left\| {\rho}^{-1} _0 \right\| _{L^{\infty}}.$ It is worth noting that the initial iteration value $k_0 = 2\|1/\rho_0\|_{L^\infty}$ is chosen specifically to handle the far-field behavior while ensuring that the initial energy $U_0^T$ satisfies the convergence condition
     \begin{equation}
    U_0^T \le K^{-\frac{1}{\nu}} A^{-\frac{1}{\nu^2}}.
\end{equation}
 We choose a suitable $T$ to bound the solution on $[0, T]$ and then employ a shifted iteration sequence for $[T, 2T]$. As long as the time step for each extension is uniform, this procedure proves the boundedness within the maximal lifespan $T^*$, which in turn establishes the time-dependent lower bound of the density.\par
 In Section 2, we introduce some preliminaries, with a particular focus on Littlewood-Paley theory. In Section 3, we first demonstrate that for any $\gamma \in (1, \frac{8}{3})$, there exists a $q>1$ satisfying the condition $\gamma \le \frac{2q+6}{q+2}$ such that the quantity $\sup_{0 \le t \le T} \|\rho^{\frac{1}{q+2}} v\|_{L^{q+2}}$ remains bounded. This allows us to establish the density upper bound by applying maximal regularity estimates for the heat equation. In Section 4, armed with the density upper bound, we extend the range of $q$ for which the boundedness of $\sup_{0 \le t \le T} \|\rho^{\frac{1}{q+2}} v\|_{L^{q+2}}$ holds. Consequently, we employ the Moser iteration method to control $\|v\|_{L^{\infty}}$ by a term involving $\sqrt{\log V_T}$, and subsequently utilize De Giorgi iteration to prove the existence of a density lower bound up to the maximal existence time $T^*$. Finally, in Section 5 and Section 6, we prove the main result by using the established density upper and lower bounds as blow-up criteria.
\section{Preliminaries}
\subsection{Basic facts on Littlewood-Paley theory}

The proof of the upper bound of $\rho$ relies essentially on Littlewood-Paley theory. For the reader's convenience, we briefly recall some basic definitions and properties in $\mathbb{R}^N$ (see, e.g., \cite{BCD}).

Let $\chi$ and $\varphi$ be two smooth radial functions satisfying the following support properties
\begin{equation}
    \operatorname{supp} \varphi \subset \left\{ \xi \in \mathbb{R}^N : \frac{3}{4} < |\xi| < \frac{8}{3} \right\} \quad \text{and} \quad \forall \, \mathbb{R}^N \setminus \{0\}, \quad \sum_{j \in \mathbb{Z}} \varphi(2^{-j}\xi) = 1;
\end{equation}
\begin{equation}
    \operatorname{supp} \chi \subset \left\{ \xi \in \mathbb{R}^N : 0 \le |\xi| < \frac{4}{3} \right\} \quad \text{and} \quad \forall \xi \,\in \mathbb{R}^N, \quad \chi(\xi) + \sum_{j \ge 0} \varphi(2^{-j}\xi) = 1.
\end{equation}

We define the non-homogeneous dyadic blocks ${\Delta}_j$ and the partial sum operators $S_j$ for any tempered distribution $u \in \mathcal{S}'(\mathbb{R}^N)$ as follows
\begin{equation}
    \Delta_j u := 
    \begin{cases}
        \varphi(2^{-j}|D|)u & \text{if } j \ge 0, \\
        \chi(|D|)u & \text{if } j = -1, \\
        0 & \text{if } j < -1,
    \end{cases}
    \quad \text{and} \quad S_j u := \sum_{i=-1}^{j-1} \Delta_i u.
\end{equation}
\begin{definition}
Let $s \in \mathbb{R}$ and $1 \le p, r \le +\infty$. The inhomogeneous Besov space $B^s_{p,r}$ consists of all tempered distributions $u \in \mathcal{S}'$ such that
\begin{equation}
    \|u\|_{B^s_{p,r}} := 
    \begin{cases}
        \displaystyle \left( \sum_{j \ge -1} 2^{jsr} \|\Delta_j u\|_{L^p}^r \right)^{\frac{1}{r}} & \text{if } r < \infty, \\[10pt]
        \displaystyle \sup_{j \ge -1} 2^{js} \|\Delta_j u\|_{L^p} & \text{if } r = \infty.
    \end{cases}
\end{equation}
\end{definition}
To accurately capture the regularizing effect of the transport-diffusion equation and, in particular, to establish a maximal regularity estimate for the heat equation, it is necessary to introduce the Chemin-Lerner type norms.
\begin{definition}
Let $s \in \mathbb{R}$, $q, r, p \in [1, +\infty]$ and $T > 0$. We define
\begin{equation}
    \|u\|_{\mathcal{L}^q_T(B^s_{p,r})} := \left\| \left( 2^{js} \|\Delta_j u\|_{L^q_TL^p} \right)_{j \ge -1} \right\|_{\ell^r}.
\end{equation}
\end{definition}
\begin{remark}
    According to Minkowski's inequality, we have the following relationship between the spaces $\mathcal{L}^q_T (B^s_{p,r})$ and $L^q_T (B^s_{p,r})$
    $$\begin{aligned}
\|u\|_{\mathcal{L}^q_T(B^s_{p,r})} &\le \|u\|_{L^q_T(B^s_{p,r})} \quad \text{if } r \ge q, \\
\|u\|_{\mathcal{L}^q_T(B^s_{p,r})} &\ge \|u\|_{L^q_T(B^s_{p,r})} \quad \text{if } r \le q.
\end{aligned}$$
\end{remark}
Let us recall the well-known Bernstein’s inequality (see \cite{BCD})
\begin{lemma}\label{lem:bernstein}
Let $\mathcal{B}$ and $\mathcal{C}$ be the ball and the annulus in $\mathbb{R}^N$ defined respectively by
\begin{equation}
    \mathcal{B} := \left\{ \xi \in \mathbb{R}^N : |\xi| \le \frac{4}{3} \right\} \quad \text{and} \quad \mathcal{C} := \left\{ \xi \in \mathbb{R}^N : \frac{3}{4} \le |\xi| \le \frac{8}{3} \right\}.
\end{equation}
Let $\lambda > 0$, $k \in \mathbb{N}$, and $1 \le a \le b \le \infty$. There exists a constant $C > 0$ such that for any smooth homogeneous function $\sigma$ of degree $m$ and any function $u \in L^a(\mathbb{R}^N)$, the following estimates hold:
\begin{align}
    \operatorname{supp} \widehat{u} \subset \lambda \mathcal{B} &\implies \sup_{|\alpha|=k} \|\partial^\alpha u\|_{L^b} \le C^{k+1} \lambda^{k + N\left(\frac{1}{a} - \frac{1}{b}\right)} \|u\|_{L^a}, \label{ineq:bernstein1} \\[5pt]
    \operatorname{supp} \widehat{u} \subset \lambda \mathcal{C} &\implies C^{-k-1} \lambda^k \|u\|_{L^a} \le \sup_{|\alpha|=k} \|\partial^\alpha u\|_{L^a} \le C^{k+1} \lambda^k \|u\|_{L^a}, \label{ineq:bernstein2} \\[5pt]
    \operatorname{supp} \widehat{u} \subset \lambda \mathcal{C} &\implies \|\sigma(D)u\|_{L^b} \le C_{\sigma, m} \lambda^{m + N\left(\frac{1}{a} - \frac{1}{b}\right)} \|u\|_{L^a}, \label{ineq:bernstein3}
\end{align}
where the Fourier multiplier is defined by $\sigma(D)u := \mathcal{F}^{-1}(\sigma \widehat{u})$.
\end{lemma}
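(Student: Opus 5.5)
The plan is the classical convolution-kernel argument (as in \cite{BCD}). We first reduce to $\lambda=1$ by scaling and then realize each operator as convolution with a Schwartz kernel, whose $L^c$ norms we control. For the reduction, set $u_\lambda(x):=u(x/\lambda)$. If $\operatorname{supp}\widehat u\subset\lambda\mathcal B$ (resp. $\lambda\mathcal C$), then $\operatorname{supp}\widehat{u_\lambda}\subset\mathcal B$ (resp. $\mathcal C$). Moreover $\|u_\lambda\|_{L^a}=\lambda^{N/a}\|u\|_{L^a}$ and $\partial^\alpha u_\lambda=\lambda^{-|\alpha|}(\partial^\alpha u)_\lambda$, and for homogeneous $\sigma$ of degree $m$ we have $\sigma(D)u_\lambda=\lambda^{-m}(\sigma(D)u)_\lambda$. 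Hence all the powers $\lambda^{k+N(1/a-1/b)}$, $\lambda^k$ and $\lambda^{m+N(1/a-1/b)}$ come out automatically, and it suffices to prove the three estimates for $\lambda=1$.

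For \eqref{ineq:bernstein1} we fix a smooth $\theta$ with $\theta\equiv1$ on $\mathcal B$ and support in the ball of radius $2$, and put $h:=\mathcal F^{-1}\theta\in\mathcal S$. Since $\widehat u=\theta\widehat u$, we get $\partial^\alpha u=(\partial^\alpha h)*u$. Young's inequality with $1+\frac1b=\frac1c+\frac1a$ gives $\|\partial^\alpha u\|_{L^b}\le\|\partial^\alpha h\|_{L^c}\|u\|_{L^a}$, and since $\|\partial^\alpha h\|_{L^c}\le\|\partial^\alpha h\|_{L^1}+\|\partial^\alpha h\|_{L^\infty}$ it remains to show $\|\partial^\alpha h\|_{L^1\cap L^\infty}\le C^{k+1}$ for $|\alpha|=k$. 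The point requiring care is the geometric dependence on $k$. We would use
\[
\|\partial^\alpha h\|_{L^1}\le \big\|(1+|x|^2)^{N}\partial^\alpha h\big\|_{L^\infty}\,\|(1+|x|^2)^{-N}\|_{L^1},
\]
and write $(1+|x|^2)^N\partial^\alpha h=\mathcal F^{-1}\big((1-\Delta_\xi)^N(\xi^\alpha\theta)\big)$ up to constants. Leibniz's rule puts at most $2N$ derivatives on $\xi^\alpha$, which produces factors $\le k^{2N}2^{k}$ on $|\xi|\le2$. This is bounded by $C^{k+1}$, and the $L^\infty$ bound is even simpler.

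For \eqref{ineq:bernstein2}, the upper bound follows from the first estimate with $a=b$, since $\mathcal C\subset$ a fixed ball. For the lower bound we take $\tilde\varphi\in C_c^\infty(\mathbb R^N\setminus\{0\})$ equal to $1$ near $\mathcal C$ and use the identity
\[
|\xi|^{2k}=\sum_{|\alpha|=k}\frac{k!}{\alpha!}\,\xi^{2\alpha}.
\]
This gives $u=\sum_{|\alpha|=k}g_\alpha(D)\,\partial^\alpha u$ with $g_\alpha(\xi)=\frac{k!}{\alpha!}(-i\xi)^\alpha|\xi|^{-2k}\tilde\varphi(\xi)$. Young's inequality then yields $\|u\|_{L^a}\le\sum_\alpha\|\mathcal F^{-1}g_\alpha\|_{L^1}\sup_{|\alpha|=k}\|\partial^\alpha u\|_{L^a}$. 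We bound $\|\mathcal F^{-1}g_\alpha\|_{L^1}$ exactly as above, using that $\frac{k!}{\alpha!}|\xi^\alpha||\xi|^{-2k}\le N^k|\xi|^{-k}$, which is geometric on the support of $\tilde\varphi$. The number of multi-indices, $(k+1)^{N-1}$, is absorbed into $C^{k+1}$.

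For \eqref{ineq:bernstein3} we write $\sigma(D)u=(\mathcal F^{-1}(\sigma\tilde\varphi))*u$. Here $\sigma\tilde\varphi\in C_c^\infty$ because $\tilde\varphi$ vanishes near the origin, where $\sigma$ may be singular, so the kernel is Schwartz. Young's inequality with the same exponent $c$ then gives the estimate with $C_{\sigma,m}=\|\mathcal F^{-1}(\sigma\tilde\varphi)\|_{L^c}$.

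The only step that is not purely formal is tracking the $k$-dependence to obtain the form $C^{k+1}$ in \eqref{ineq:bernstein1}–\eqref{ineq:bernstein2}. The weighted-$L^\infty$ trick with a fixed number ($2N$) of $\xi$-derivatives is what we rely on to make that dependence geometric rather than factorial.
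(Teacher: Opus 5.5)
Your proposal is correct and is essentially the standard argument of Bahouri--Chemin--Danchin \cite{BCD}, which the paper cites instead of proving: scaling to $\lambda=1$, then convolution with a Schwartz kernel plus Young's inequality, then the weighted-$L^\infty$ bound with a fixed number $2N$ of Fourier-side derivatives to get the geometric growth $C^{k+1}$, and finally the identity $|\xi|^{2k}=\sum_{|\alpha|=k}\frac{k!}{\alpha!}\xi^{2\alpha}$ with a cutoff $\tilde\varphi$ supported away from $0$ for the reverse inequality. The one place where ``exactly as above'' hides a little work is the lower bound, where the $2N$ derivatives also fall on $|\xi|^{-2k}$; this produces factors polynomial in $k$ times $|\xi|^{-2k-|\beta|}$, which stay geometric in $k$ precisely because $\operatorname{supp}\tilde\varphi$ avoids the origin.
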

\subsection{Some useful estimates}
In what follows, we state some useful embedding theorems that can be derived directly from the definition and Bernstein’s lemma (refer to \cite{BCD}).
\begin{lemma}[Embedding inequalities]\label{lem:embedding}
Let $1 \le p_1 \le p_2 \le \infty$, $1 \le r_1 \le r_2 \le \infty$, $1 \le p \le \infty$, $1 \le r \le \infty$ and $d$ denote the spatial dimension.  Then, for any real number $s$, $$B^s_{p_1, r_1} \hookrightarrow B^{s - d \left( \frac{1}{p_1} - \frac{1}{p_2} \right)}_{p_2, r_2},$$
and 
$$B_{p,r}^{s} \hookrightarrow B_{p,r_1}^{s_1}, \quad \text{if } s_1 < s \quad \text{or} \quad s_1 = s, \, r_1 \ge r.$$
Furthermore, we have the following endpoint embeddings
\begin{equation}
    B^{\frac{d}{p}}_{p,1} \hookrightarrow B^0_{\infty, 1} \hookrightarrow L^\infty 
    \quad \text{and} \quad 
    B^0_{p,1} \hookrightarrow L^p \hookrightarrow B^0_{p,\infty}.
\end{equation}
Finally, let us recall that the fractional Sobolev space $H^s$ coincides with the Besov space $B^s_{2,2}$, i.e. 
$$C^{-\left( \left| s \right|+1 \right)}\left\| u \right\| _{B_{2,2}^{s}}\le \left\| u \right\| _{H^s}\le C^{\left( \left| s \right|+1 \right)}\left\| u \right\| _{B_{2,2}^{s}}.
$$
\end{lemma}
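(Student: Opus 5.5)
The statement immediately above is Lemma \ref{lem:embedding}, a collection of standard embeddings. I would derive each one directly from the definition of $B^s_{p,r}$ together with Bernstein's inequality (Lemma \ref{lem:bernstein}).

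\textbf{First embedding.} Every block $\Delta_j u$ with $j\ge0$ has spectrum in $2^j\mathcal{C}$, and $\Delta_{-1}u$ has spectrum in $\mathcal{B}$. Applying \eqref{ineq:bernstein1} with $k=0$, $a=p_1$, $b=p_2$ gives
\[
\|\Delta_j u\|_{L^{p_2}}\le C\,2^{jd(\frac1{p_1}-\frac1{p_2})}\|\Delta_j u\|_{L^{p_1}} .
\]
Multiplying by $2^{j(s-d(1/p_1-1/p_2))}$ and taking the $\ell^{r_2}$ norm, the inclusion $\ell^{r_1}\subset\ell^{r_2}$ (valid since $r_1\le r_2$) yields the embedding.

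\textbf{Second embedding.} If $s_1=s$ and $r_1\ge r$, the claim is again just $\ell^r\subset\ell^{r_1}$. If $s_1<s$, I would bound
\[
2^{js_1}\|\Delta_j u\|_{L^p}\le 2^{-j(s-s_1)}\|u\|_{B^s_{p,\infty}} .
\]
The sequence $2^{-j(s-s_1)}$ is summable in $\ell^{r_1}$ for every $r_1$, and $B^s_{p,r}\subset B^s_{p,\infty}$, which closes this case.

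\textbf{Endpoint embeddings.} The inclusion $B^{d/p}_{p,1}\hookrightarrow B^0_{\infty,1}$ is the first embedding with $p_2=\infty$. For $B^0_{\infty,1}\hookrightarrow L^\infty$, I would write $u=\sum_{j\ge-1}\Delta_j u$, with convergence in $\mathcal S'$ and, by the summability, absolutely in $L^\infty$. This gives $\|u\|_{L^\infty}\le\sum_j\|\Delta_j u\|_{L^\infty}$, and the same argument gives $B^0_{p,1}\hookrightarrow L^p$. For $L^p\hookrightarrow B^0_{p,\infty}$, I would note that $\Delta_j$ is convolution with $2^{jd}h(2^j\cdot)$, whose $L^1$ norm is independent of $j$. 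Young's inequality then gives $\|\Delta_j u\|_{L^p}\le C\|u\|_{L^p}$.

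\textbf{Identification $H^s=B^s_{2,2}$.} I would use Plancherel: $\|\Delta_j u\|_{L^2}^2=\int \varphi^2(2^{-j}\xi)|\hat u|^2\,d\xi$. On the support of $\varphi(2^{-j}\cdot)$ we have $(1+|\xi|^2)^{s}\simeq 2^{2js}$, with constants of the form $C^{|s|}$ because $|\xi|\in[\frac34 2^j,\frac83 2^j]$. Moreover $\frac12\le\chi^2+\sum_j\varphi^2(2^{-j}\xi)\le1$, since at most two terms overlap. Summing over $j$ gives the two-sided equivalence with constants $C^{|s|+1}$.

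There is no real obstacle here. The only point requiring care is tracking the $s$-dependence of the constants in the $H^s$ equivalence, and justifying that the Littlewood–Paley series converges in the endpoint cases.
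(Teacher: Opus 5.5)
Your proposal is correct and follows the route the paper indicates: the paper gives no proof beyond saying these embeddings follow from the definition of $B^s_{p,r}$ and Bernstein's lemma (citing \cite{BCD}). Your block-by-block argument is the standard proof found there: Bernstein plus $\ell^{r_1}\subset\ell^{r_2}$, geometric summability when $s_1<s$, absolute convergence of $\sum_j\Delta_j u$ for the endpoint cases, Young's inequality for $L^p\hookrightarrow B^0_{p,\infty}$, and Plancherel with $\frac12\le\chi^2+\sum_j\varphi^2(2^{-j}\cdot)\le1$ for $H^s=B^s_{2,2}$. The only cosmetic point is that for $j\ge0$ you are really using the ball version of Bernstein via $2^j\mathcal{C}\subset 2^{j+1}\mathcal{B}$, which only changes the constant.
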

According to \cite{BCD}, in the following, we state the interpolation theorem.
\begin{lemma}[Interpolation Inequalities]\label{lem:interpolation}
There exists a constant $C > 0$ such that the following properties hold. Let $s_1, s_2 \in \mathbb{R}$ with $s_1 < s_2$, let $\theta \in (0, 1)$, and let $(p, r) \in [1, \infty]^2$. Then, we have the following estimates
\begin{equation}
    \|u\|_{B^{\theta s_1 + (1-\theta)s_2}_{p,r}} \le \|u\|^\theta_{B^{s_1}_{p,r}} \|u\|^{1-\theta}_{B^{s_2}_{p,r}},
\end{equation}
and
\begin{equation}\label{2.12}
    \|u\|_{B^{\theta s_1 + (1-\theta)s_2}_{p,1}} \le \frac{C}{s_2 - s_1} \left( \frac{1}{\theta} + \frac{1}{1-\theta} \right) \|u\|^\theta_{B^{s_1}_{p,\infty}} \|u\|^{1-\theta}_{B^{s_2}_{p,\infty}}.
\end{equation}
\end{lemma}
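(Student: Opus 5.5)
The plan is to prove the interpolation inequalities of Lemma \ref{lem:interpolation} directly from the definition of the Besov norms, working block by block on the sequence $a_j := \|\Delta_j u\|_{L^p}$, $j \ge -1$. Set $s := \theta s_1 + (1-\theta)s_2$.

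For the first inequality, write
\[
2^{js} a_j = \bigl(2^{js_1} a_j\bigr)^\theta \bigl(2^{js_2} a_j\bigr)^{1-\theta}.
\]
Then apply H\"older's inequality in $\ell^r$ with exponents $r/\theta$ and $r/(1-\theta)$. This gives
\[
\|(2^{js} a_j)\|_{\ell^r} \le \|(2^{js_1} a_j)^\theta\|_{\ell^{r/\theta}}\, \|(2^{js_2} a_j)^{1-\theta}\|_{\ell^{r/(1-\theta)}} = \|u\|_{B^{s_1}_{p,r}}^\theta \|u\|_{B^{s_2}_{p,r}}^{1-\theta}.
\]
The case $r=\infty$ is the same argument with suprema in place of sums. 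This step is routine.

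For the second inequality, which is the substantive one, use the $\ell^\infty$ bounds
\[
a_j \le 2^{-js_1} A, \qquad a_j \le 2^{-js_2} B, \qquad A := \|u\|_{B^{s_1}_{p,\infty}}, \quad B := \|u\|_{B^{s_2}_{p,\infty}}.
\]
Split the sum $\sum_{j\ge -1} 2^{js} a_j$ at an integer $J$ to be chosen. On the low frequencies $j \le J$, use the first bound:
\[
\sum_{j\le J} 2^{j(s-s_2)}\cdot 2^{j s_2}a_j \le A\sum_{j\le J} 2^{j(1-\theta)(s_2-s_1)},
\]
since $s - s_1 = (1-\theta)(s_2 - s_1) > 0$. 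This geometric series is bounded by $A\, 2^{J(1-\theta)(s_2-s_1)} / \bigl(1-2^{-(1-\theta)(s_2-s_1)}\bigr)$. On the high frequencies $j > J$, use the second bound, which gives the decaying series
\[
B\sum_{j>J} 2^{-j\theta(s_2-s_1)} \le B\, 2^{-J\theta(s_2-s_1)} / \bigl(2^{\theta(s_2-s_1)}-1\bigr).
\]
Now choose $J$ as the integer part of $\log_2(B/A)/(s_2-s_1)$; the cases $A=0$ and $B\ge A$ are handled trivially. With this choice $2^{J(s_2-s_1)} \approx B/A$, and both pieces become $\lesssim A^\theta B^{1-\theta}$.

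The main point is tracking the constants. Use the elementary bound $(1-2^{-x})^{-1} \le C(1+1/x)$, which yields prefactors of size $\frac{1}{(1-\theta)(s_2-s_1)}$ and $\frac{1}{\theta(s_2-s_1)}$. The loss from $J$ being an integer costs at most a factor $2^{s_2-s_1}$. To avoid this factor, compare each piece against the ideal real number $J_* = \log_2(B/A)/(s_2-s_1)$: take $J = \lfloor J_*\rfloor$ for the low sum and note that the high sum starts at $J+1 > J_*$. Then the exponents $2^{\theta(s_2-s_1)(J_*-J-1)}$ and $2^{(1-\theta)(s_2-s_1)(J-J_*)}$ are both at most $1$. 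Combining the two estimates gives exactly $\frac{C}{s_2-s_1}\left(\frac1\theta+\frac1{1-\theta}\right)A^\theta B^{1-\theta}$, which is the claimed inequality \eqref{2.12}.
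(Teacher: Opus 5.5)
Your route is the same as the paper's (Remark \ref{Remark 2.2}): split $\sum_{j\ge-1}2^{js}\|\Delta_ju\|_{L^p}$ at a threshold $J$ with $2^{J(s_2-s_1)}\approx B/A$, using the $B^{s_1}_{p,\infty}$ bound below $J$ and the $B^{s_2}_{p,\infty}$ bound above it. Your H\"older argument for the first inequality is correct; the paper only cites it. Your rounding device, taking $J=\lfloor J_*\rfloor$ and starting the tail at $J+1>J_*$, is more careful than the paper's. There, the choice $2^{N(s_2-s_1)}\le M_2/M_1$ gives $2^{-N\theta(s_2-s_1)}\ge (M_2/M_1)^{-\theta}$, which is the wrong direction for $I_{high}$. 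Two small slips: in your low-frequency display the factorization should read $2^{j(s-s_1)}\cdot 2^{js_1}a_j$, and no case split on $B\ge A$ is needed, since $B\ge 2^{-(s_2-s_1)}A$ forces $J_*\ge-1$.

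The genuine gap is the final constant. What your computation actually proves is
\[
\|u\|_{B^{s}_{p,1}}\le\Bigl(\frac{1}{1-2^{-(1-\theta)(s_2-s_1)}}+\frac{1}{1-2^{-\theta(s_2-s_1)}}\Bigr)A^\theta B^{1-\theta}.
\]
The bound $(1-2^{-x})^{-1}\le C(1+1/x)$ turns this into $C\bigl(2+\frac{1}{s_2-s_1}(\frac1\theta+\frac1{1-\theta})\bigr)A^\theta B^{1-\theta}$, not into the claimed form. The additive $2C$ cannot be absorbed once $(s_2-s_1)\theta(1-\theta)$ is large.

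No argument can absorb it, because the lemma with a universal $C$ is false in that regime. Take $\widehat u\in C_c^\infty$ supported in $\{|\xi|\le\frac34\}$. Then every $\varphi(2^{-j}\cdot)$ with $j\ge0$ vanishes on $\operatorname{supp}\widehat u$, so $\Delta_{-1}u=u$ and all other blocks vanish. Hence $\|u\|_{B^\sigma_{p,r}}=2^{-\sigma}\|u\|_{L^p}$ for every $\sigma$, and the inequality is an equality with constant $1$. But for $\theta=\frac12$ and $s_2-s_1>4C$ the claimed constant is $\frac{4C}{s_2-s_1}<1$.

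The paper's step $\sum_{j\le N}2^{j(1-\theta)(s_2-s_1)}\le\frac{C}{(1-\theta)(s_2-s_1)}2^{N(1-\theta)(s_2-s_1)}$ fails for the same reason: the term $j=N$ alone exceeds the right side once $(1-\theta)(s_2-s_1)>C$.

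What can be proved is the displayed inequality. Equivalently, the lemma holds under the extra restriction $s_2-s_1\le\delta_0$ with $C=C(\delta_0)$, using $\frac1\theta+\frac1{1-\theta}\ge4$ to absorb the additive term. That covers the paper's only use of the lemma, where $s_2-s_1=3\bigl(\frac12-\frac1{q+2}\bigr)<\frac32$.
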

\begin{remark}\label{Remark 2.2}
    The inequality \eqref{2.12} is called optimal interpolation inequality. We will use this optimal interpolation inequality under Chemin-Lerner type norms, i.e.
    \begin{equation}
    \|u\|_{\mathcal{L}_{T}^{\infty}B^{\theta s_1 + (1-\theta)s_2}_{p,1}} \le \frac{C}{s_2 - s_1} \left( \frac{1}{\theta} + \frac{1}{1-\theta} \right) \|u\|^\theta_{\mathcal{L}_{T}^{\infty}B^{s_1}_{p,\infty}} \|u\|^{1-\theta}_{\mathcal{L}_{T}^{\infty}B^{s_2}_{p,\infty}}.
    \end{equation}
    Let us briefly prove this inequality.
    \begin{proof}
        Set  $s = \theta s_1 + (1-\theta)s_2$ and $c_j = \|\Delta_j u\|_{L^\infty_T L^p}$, so we have $\|u\|_{\mathcal{L}_{T}^{\infty}B^{s}_{p,1}} = \sum_{j \ge -1} 2^{js} c_j$.
        Since we have $$M_1 := \|u\|_{\mathcal{L}_{T}^{\infty}B^{s_1}_{p,\infty}} = \sup_{j \ge -1} 2^{js_1} c_j \implies c_j \le 2^{-js_1} M_1,$$
        $$M_2 := \|u\|_{\mathcal{L}_{T}^{\infty}B^{s_2}_{p,\infty}} = \sup_{j \ge -1} 2^{js_2} c_j \implies c_j \le 2^{-js_2} M_2.$$
           For any integer $N$ (to be determined later), we split $\|u\|_{\mathcal{L}_{T}^{\infty}B^{s}_{p,1}} = \sum_{j \ge -1} 2^{js} c_j$ into a low-frequency part ($j \le N$) and a high-frequency part ($j > N$)
    $$ \sum_{j \ge -1} 2^{js} c_j= \underbrace{\sum_{j \le N} 2^{js} c_j}_{I_{low}} + \underbrace{\sum_{j > N} 2^{js} c_j}_{I_{high}}.$$
    The low-frequency part can be estimated as 
    \begin{equation}
        \begin{aligned}
I_{low} &= \sum_{j \le N} 2^{j(\theta s_1 + (1-\theta)s_2)} c_j \le \sum_{j \le N} 2^{j(\theta s_1 + (1-\theta)s_2)} \left( 2^{-js_1} M_1 \right) \\
&= M_1 \sum_{j \le N} 2^{j(1-\theta)(s_2 - s_1)}\le \frac{C}{(1-\theta)(s_2 - s_1)} M_1 2^{N(1-\theta)(s_2 - s_1)}.
\end{aligned}
    \end{equation}
    On the other hand, the high-frequency part can be estimated by 
    \begin{equation}
        \begin{aligned}
I_{high} &= \sum_{j > N} 2^{j s} c_j\le \sum_{j > N} 2^{j s} \left( 2^{-js_2} M_2 \right) \\
&= M_2 \sum_{j > N} 2^{-j \theta (s_2 - s_1)}\le \frac{C}{\theta(s_2 - s_1)} M_2 2^{-N \theta (s_2 - s_1)}.
\end{aligned}
    \end{equation}
    Now we choose $N$ s.t.
    \begin{equation}
        2^{N(s_2-s_1)} \le \frac{M_2}{M_1} < 2^{(N+1)(s_2-s_1)},
    \end{equation}
    we arrive at 
    \begin{equation}
    \begin{split}
     \|u\|_{\mathcal{L}_{T}^{\infty}B^{s}_{p,1}} &\le \frac{C}{(1-\theta)(s_2 - s_1)} M_1 \left( \frac{M_2}{M_1} \right)^{1-\theta}+\frac{C}{\theta(s_2 - s_1)} M_2 \left( \frac{M_2}{M_1} \right)^{-\theta} 
        \\
        &= \frac{C}{(1-\theta)(s_2 - s_1)} M_1^\theta M_2^{1-\theta}+\frac{C}{\theta(s_2 - s_1)} M_1^\theta M_2^{1-\theta}
        \\
        &=\frac{C}{s_2 - s_1} \left( \frac{1}{\theta} + \frac{1}{1-\theta} \right) \|u\|^\theta_{\mathcal{L}_{T}^{\infty}B^{s_1}_{p,\infty}} \|u\|^{1-\theta}_{\mathcal{L}_{T}^{\infty}B^{s_2}_{p,\infty}}.
    \end{split}
    \end{equation}
    This completes the proof.
    \end{proof}
\end{remark}
We now introduce a maximal regularity estimate for the heat equation (see \cite{BCD}).
\begin{lemma}\label{lemma:heat_maximal_regularity}
Let $s \in \mathbb{R}$, $(p,r) \in [1, +\infty]^2$, and let $1 \le q_2 \le q_1 \le +\infty$. Suppose that the initial data $u_0$ belongs to $B^s_{p,r}$ and the source term $f$ belongs to $\mathcal{L}^{q_2}_T(B^{s-2+2/q_2}_{p,r})$. Consider the Cauchy problem for the heat equation with viscosity $\mu > 0$
\begin{equation}
    \begin{cases}
        \partial_t u - \mu \Delta u = f, \\
        u|_{t=0} = u_0.
    \end{cases}
\end{equation}
Then, there exists a constant $C > 0$, depending only on the dimension $N$, $\mu$, $q_1$, and $q_2$, such that the following maximal regularity estimate holds:
\begin{equation}
    \|u\|_{\mathcal{L}^{q_1}_T(B^{s+2/q_1}_{p,r})} \le C \left( \|u_0\|_{B^s_{p,r}} + \|f\|_{\mathcal{L}^{q_2}_T(B^{s-2+2/q_2}_{p,r})} \right).
\end{equation}
\end{lemma}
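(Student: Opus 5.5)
The statement is the maximal regularity estimate for the heat equation in Chemin--Lerner spaces, Lemma \ref{lemma:heat_maximal_regularity}. I would prove it block by block in frequency, using Duhamel's formula and the decay of the heat semigroup on dyadic annuli.

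\textbf{Step 1: localization.} Apply $\Delta_j$ to the equation. Since $\Delta_j$ commutes with the heat operator, $u_j:=\Delta_j u$ satisfies
\[
u_j(t)=e^{\mu t\Delta}\Delta_j u_0+\int_0^t e^{\mu(t-\tau)\Delta}\Delta_j f(\tau)\,d\tau .
\]

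\textbf{Step 2: semigroup bound on each block.} For $j\ge 0$, the spectrum of $\Delta_j g$ lies in $2^j\mathcal{C}$. The standard consequence of Bernstein's inequality (Lemma \ref{lem:bernstein}) is
\[
\|e^{\mu t\Delta}\Delta_j g\|_{L^p}\le C e^{-c\mu t 2^{2j}}\|\Delta_j g\|_{L^p}.
\]
To prove it, write the operator as convolution with the kernel $\mathcal{F}^{-1}\big(\tilde\varphi(2^{-j}\xi)e^{-\mu t|\xi|^2}\big)$, rescale to $j=0$, and bound its $L^1$ norm. The $L^1$ bound comes from integrating by parts with $(1+|x|^2)^N$ against a symbol supported in an annulus, where $|\xi|^2\ge 9/16$ gives the exponential decay. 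Young's inequality then finishes the step. For $j=-1$ I will only use the trivial bound $\|e^{\mu t\Delta}\Delta_{-1}g\|_{L^p}\le\|\Delta_{-1}g\|_{L^p}$.

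\textbf{Step 3: time integrability for $j\ge0$.} Take the $L^{q_1}_T$ norm of the block estimate. For the initial-data term,
\[
\|e^{-c\mu t2^{2j}}\|_{L^{q_1}}\le C2^{-2j/q_1}.
\]
For the Duhamel term, Young's inequality in time with $1+\frac1{q_1}=\frac1a+\frac1{q_2}$ (possible because $q_2\le q_1$) gives
\[
\|e^{-c\mu\cdot 2^{2j}}\|_{L^a}\le C2^{-2j/a}=C2^{-2j(1+1/q_1-1/q_2)}.
\]
Multiplying by $2^{j(s+2/q_1)}$, I obtain
\[
2^{j(s+2/q_1)}\|u_j\|_{L^{q_1}_TL^p}\le C\Big(2^{js}\|\Delta_ju_0\|_{L^p}+2^{j(s-2+2/q_2)}\|\Delta_jf\|_{L^{q_2}_TL^p}\Big).
\]
Taking the $\ell^r$ norm over $j$ gives the claimed estimate with the Chemin--Lerner norms, by their definition.

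\textbf{Main obstacle: the low-frequency block $j=-1$.} There is no dissipation to exploit, so the time integrals produce factors of $T$ and the constant is not uniform in $T$. For example, $\|\Delta_{-1}u\|_{L^{q_1}_TL^p}\lesssim T^{1/q_1}\|\Delta_{-1}u_0\|_{L^p}+T^{1/q_1+1-1/q_2}\|\Delta_{-1}f\|_{L^{q_2}_TL^p}$. With the inhomogeneous decomposition one therefore either accepts a constant depending on $T$ (bounded for $T\le T_0$), or replaces $\Delta_{-1}$ by homogeneous blocks $\dot\Delta_j$, $j<0$, for which Step 2 applies verbatim. I would state explicitly that $C$ may depend on $T$, or else work with homogeneous norms, and add this dependence to the lemma's constant. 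I will check the usage later in the paper to confirm that a $T$-dependent constant is acceptable, since the estimates there are on a finite $[0,T]$ anyway.
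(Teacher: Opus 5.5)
Your argument (localize with $\Delta_j$, use the annulus decay $\|e^{\mu t\Delta}\Delta_j g\|_{L^p}\le Ce^{-c\mu t2^{2j}}\|\Delta_j g\|_{L^p}$ for $j\ge0$, apply Young's inequality in time with $1+\frac1{q_1}=\frac1a+\frac1{q_2}$, then sum in $\ell^r$) is the standard proof behind the result in \cite{BCD}, which the paper cites without reproducing, so for $j\ge0$ there is nothing to add. Your caveat about $j=-1$ is correct and exposes a genuine inaccuracy in the statement: with inhomogeneous norms the constant must grow like $(1+T)^{1+1/q_1-1/q_2}$, and it is $T$-uniform only for $(q_1,q_2)=(\infty,1)$. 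To see this for $q_1=q_2=\infty$, the only case the paper uses, take $u_0=0$, $f(t,x)=g(x)$ with $\operatorname{supp}\widehat g\subset\{|\xi|\le\varepsilon\}$, small $\varepsilon$, and $T=\delta(\mu\varepsilon^2)^{-1}$ with small $\delta$; then $\|u(T)\|_{L^p}\ge\frac T2\|g\|_{L^p}$, while the right-hand side stays of order $\|g\|_{L^p}$. Your $T$-dependent version is all the paper needs, since the constant in the density upper bound proposition already depends on $T$.
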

The following iteration lemma (due to Ladyzhenskaya et al. \cite{LSU1968} Lemma 5.6, p. 95) will be used in the proof of the density lower bound via the De Giorgi iteration.
\begin{lemma}[Iterative Lemma]\label{lem:moser_iteration}
Let $\{X_k\}_{k \in \mathbb{N}}$ be a sequence of non-negative real numbers satisfying the recurrence relation
\begin{equation}\label{eq:recurrence_relation}
    X_{k+1} \le K A^k X_k^{1+\nu}, \quad \forall k \ge 0,
\end{equation}
where $K, \nu > 0$ and $A \ge 1$ are fixed constants. Then, the following estimate holds for any $k \ge 0$:
\begin{equation}\label{eq:iteration_bound}
    X_k \le K^{\frac{(1+\nu)^k - 1}{\nu}} A^{\frac{(1+\nu)^k - 1}{\nu^2} - \frac{k}{\nu}} X_0^{(1+\nu)^k}.
\end{equation}
In particular, let $\Theta$ be defined as
\begin{equation}
    \Theta = K^{-\frac{1}{\nu}} A^{-\frac{1}{\nu^2}}.
\end{equation}
If $A > 1$ and the initial data satisfies the condition $X_0 \le \Theta$, then we have
\begin{equation}
    X_k \le \Theta A^{-\frac{k}{\nu}},
\end{equation}
which implies that $\lim_{k \to \infty} X_k = 0$.
\end{lemma}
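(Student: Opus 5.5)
We prove the statement by induction on $k$. For $k=0$ the bound reads $X_0 \le K^{0}A^{0}X_0^{1}$, which is trivial.

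Assume the bound holds for $k$. Write $a_k = \frac{(1+\nu)^k-1}{\nu}$ and $b_k = \frac{(1+\nu)^k-1}{\nu^2}-\frac{k}{\nu}$. Inserting the inductive hypothesis into the recurrence \eqref{eq:recurrence_relation} and using that $t\mapsto t^{1+\nu}$ is increasing on $[0,\infty)$, we get
\begin{equation*}
X_{k+1} \le K^{1+(1+\nu)a_k} A^{k+(1+\nu)b_k} X_0^{(1+\nu)^{k+1}}.
\end{equation*}
The exponent of $X_0$ is already correct. For the exponent of $K$,
\begin{equation*}
1+(1+\nu)a_k = \frac{(1+\nu)^{k+1}-1}{\nu} = a_{k+1}.
\end{equation*}
For the exponent of $A$,
\begin{equation*}
k+(1+\nu)b_k = \frac{(1+\nu)^{k+1}-(1+\nu)}{\nu^2} - \frac{k(1+\nu)}{\nu} + k = \frac{(1+\nu)^{k+1}-1}{\nu^2} - \frac{1}{\nu} - \frac{k}{\nu} = b_{k+1}.
\end{equation*}
This closes the induction and proves \eqref{eq:iteration_bound}. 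Keeping track of this exponent arithmetic, especially the $-k/\nu$ correction in $b_k$, is the only delicate point.

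For the second part, assume $X_0 \le \Theta = K^{-1/\nu}A^{-1/\nu^2}$. Substituting this into \eqref{eq:iteration_bound}, with $X_0^{(1+\nu)^k} \le \Theta^{(1+\nu)^k}$, the powers of $K$ combine to
\begin{equation*}
\frac{(1+\nu)^k-1}{\nu} - \frac{(1+\nu)^k}{\nu} = -\frac{1}{\nu},
\end{equation*}
and the powers of $A$ combine to
\begin{equation*}
\frac{(1+\nu)^k-1}{\nu^2} - \frac{k}{\nu} - \frac{(1+\nu)^k}{\nu^2} = -\frac{1}{\nu^2} - \frac{k}{\nu}.
\end{equation*}
Hence $X_k \le K^{-1/\nu}A^{-1/\nu^2}A^{-k/\nu} = \Theta A^{-k/\nu}$.

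Since $A>1$ and $\nu>0$, we have $A^{-k/\nu}\to 0$ as $k\to\infty$. Because $X_k \ge 0$, it follows that $\lim_{k\to\infty} X_k = 0$.
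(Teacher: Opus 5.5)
Your proof is correct: the induction closes with the right exponents for both $K$ and $A$ (including the $-k/\nu$ term), and substituting $X_0\le\Theta$ into the closed-form bound yields $X_k\le\Theta A^{-k/\nu}$. The paper gives no proof of its own and simply cites Ladyzhenskaya et al.\ (Lemma 5.6), whose argument is this same induction on $k$.
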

\section{A priori estimates: Upper bound of $\rho$}
In this section, we employ Littlewood-Paley theory and maximal regularity estimates for the heat equation to establish upper bounds on the density. This strategy relies on the favorable parabolic structure of the first equation, combined with the estimate on $\|\rho^{1/(q+2)}v\|_{L^{q+2}}(q>1)$.\par
Firstly, we state the basic energy inequality here. Defining the initial energy functional by
\begin{equation}
    E_0 = \int_{\mathbb{R}^3} \left( \frac{1}{2}\rho_0|u_0|^2 + \frac{\gamma}{\gamma - 1}\Pi(\rho_{0})+ 2\left|\nabla\sqrt{\rho_0}\right|^2 \right) dx,
\end{equation}
where the potential energy density is defined by
	\begin{equation}
		\Pi(\rho) := \frac{1}{\gamma}\rho^\gamma - \frac{1}{\gamma}\bar{\rho}^\gamma - \bar{\rho}^{\gamma-1}(\rho - \bar{\rho}).
	\end{equation}
    To obtain estimates for certain norms of $\rho-\bar{\rho}$, we establish the equivalence of $\Pi$ by partitioning the density range. Indeed, Lemma \ref{lem:bounded_density_explicit} and Lemma \ref{lem:large_density_control} show that $\Pi$ exhibits the following asymptotic behaviors in different ranges
        \begin{equation}
    \Pi(\rho) \sim 
    \begin{cases} 
        |\rho - \bar{\rho}|^2, & \text{if } \rho \in [0, 4\bar{\rho}], \\
        |\rho - \bar{\rho}|^\gamma, & \text{if } \rho >4\bar{\rho}.
    \end{cases}
\end{equation}
\begin{remark}
    When $\gamma=1$, we define
    $$\Pi(\rho) := \rho \ln\left(\frac{\rho}{\bar{\rho}}\right) + \bar{\rho} - \rho.$$
    We can also derive analogous estimates, as detailed in Proposition 5.5 of Haspot \cite{Haspot 1}.
\end{remark}
    \begin{lemma}\label{lem:bounded_density_explicit}
Let $\bar{\rho} > 0$. Then under the assumptions of Theorem 1.1, there exists a positive constant $C$ depending on $\bar{\rho}$ and $\gamma$ such that for any $\rho \in [0, 4\bar{\rho}]$,
\begin{equation}
    C^{-1}(\rho - \bar{\rho})^2 \le \Pi(\rho) \le C(\rho - \bar{\rho})^2.
\end{equation}
\end{lemma}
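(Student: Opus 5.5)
We prove the two-sided bound by Taylor's formula with integral remainder around $\bar\rho$, using that $\Pi$ is the second-order remainder of $\rho\mapsto\rho^\gamma/\gamma$ at $\bar\rho$ (here $\gamma>1$, as in the rest of this section). First, $\Pi(\bar\rho)=0$, $\Pi'(\rho)=\rho^{\gamma-1}-\bar\rho^{\gamma-1}$, so $\Pi'(\bar\rho)=0$, and $\Pi''(\rho)=(\gamma-1)\rho^{\gamma-2}$. Hence for every $\rho\ge0$,
\begin{equation*}
\Pi(\rho)=(\rho-\bar\rho)^2\int_0^1(1-s)\,(\gamma-1)\big(\bar\rho+s(\rho-\bar\rho)\big)^{\gamma-2}\,ds .
\end{equation*}
The formula is also valid at $\rho=0$ when $\gamma<2$: the integrand is then integrable near $s=1$ because $\gamma-2>-1$. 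The task is therefore to bound the weight
\begin{equation*}
W(\rho):=\int_0^1(1-s)(\gamma-1)\big(\bar\rho+s(\rho-\bar\rho)\big)^{\gamma-2}ds
\end{equation*}
above and below by positive constants depending only on $\bar\rho,\gamma$, uniformly for $\rho\in[0,4\bar\rho]$.

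The intermediate point $\xi_s=\bar\rho+s(\rho-\bar\rho)$ always lies in $[0,4\bar\rho]$. For $s\in[0,1/2]$ we even have $\xi_s\in[\bar\rho/2,\,5\bar\rho/2]$, so on that half-interval $\xi_s^{\gamma-2}$ is bounded above and below by constants $c(\bar\rho,\gamma)>0$, whatever the sign of $\gamma-2$. This gives the lower bound immediately:
\begin{equation*}
W\ge(\gamma-1)\min_{[\bar\rho/2,5\bar\rho/2]}\xi^{\gamma-2}\int_0^{1/2}(1-s)\,ds>0 .
\end{equation*}
For the upper bound we split by the sign of $\gamma-2$. If $\gamma\ge2$, then $\xi_s^{\gamma-2}\le(4\bar\rho)^{\gamma-2}$ and we are done. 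If $1<\gamma<2$, the weight blows up only where $\xi_s\to0$, which happens only when $\rho$ is near $0$ and $s$ near $1$. There we use $\xi_s\ge(1-s)\bar\rho$, valid since $\rho\ge0$. This gives $(1-s)\xi_s^{\gamma-2}\le\bar\rho^{\gamma-2}(1-s)^{\gamma-1}$, which is integrable on $[0,1]$, so $W\le\bar\rho^{\gamma-2}$.

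The only delicate point is this singular case $1<\gamma<2$ near vacuum, and the elementary bound $\xi_s\ge(1-s)\bar\rho$ handles it uniformly. Alternatively one can use the closed form $\Pi(0)=\frac{\gamma-1}{\gamma}\bar\rho^\gamma>0$ together with continuity and positivity of $\Pi(\rho)/(\rho-\bar\rho)^2$ on the compact interval $[0,4\bar\rho]$, extended at $\rho=\bar\rho$ by the value $\Pi''(\bar\rho)/2$. Both routes give constants depending only on $\bar\rho$ and $\gamma$.
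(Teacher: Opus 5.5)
Your proof is correct, but your main argument differs from the paper's. The paper's proof is qualitative:
\begin{itemize}
\item it sets $F(\rho)=\Pi(\rho)/(\rho-\bar\rho)^2$;
\item it computes $\lim_{\rho\to\bar\rho}F(\rho)=\frac{\gamma-1}{2}\bar\rho^{\gamma-2}$ and $\lim_{\rho\to0^+}F(\rho)=\frac{\gamma-1}{\gamma}\bar\rho^{\gamma-2}$;
\item it uses strict convexity of $\Pi$ for positivity away from $\bar\rho$;
\item it concludes by the extreme value theorem on $[0,4\bar\rho]$.
\end{itemize}
This is precisely the alternative you sketch in your last paragraph.

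Your integral-remainder argument instead gives explicit constants:
\begin{itemize}
\item $W\ge\frac38(\gamma-1)\min\{(\bar\rho/2)^{\gamma-2},(5\bar\rho/2)^{\gamma-2}\}$;
\item $W\le\frac{\gamma-1}{2}(4\bar\rho)^{\gamma-2}$ for $\gamma\ge2$;
\item $W\le\frac{\gamma-1}{\gamma}\bar\rho^{\gamma-2}$ for $1<\gamma<2$.
\end{itemize}
It makes the $\bar\rho^{\gamma-2}$ scaling visible. It also isolates the only delicate point, the loss of $C^2$ regularity of $\rho^\gamma$ at vacuum when $\gamma<2$, which your bound $\xi_s\ge(1-s)\bar\rho$ disposes of. The same argument carries over, with the factor $\gamma-1$ dropped, to the $\gamma=1$ potential $\rho\ln(\rho/\bar\rho)+\bar\rho-\rho$, where $\Pi''(\xi)=\xi^{-1}$. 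For that case the paper only refers to Haspot. The compactness proof is shorter and avoids the case split, at the price of non-explicit constants.

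Two minor remarks. First, restricting to $\gamma>1$ is genuinely necessary: for $\gamma=1$ the displayed $\Pi$ vanishes identically, and the paper's proof also uses $\gamma>1$ implicitly. Second, finiteness of the integral alone does not justify the remainder formula at $\rho=0$. Justify it by letting $\rho\downarrow0$, either by monotone convergence or simply by continuity of $\Pi$ together with your bounds on $W$, which are uniform for $\rho\in(0,4\bar\rho]$.
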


\begin{proof}
Set the auxiliary function $F(\rho)$ for $\rho \in [0, 4\bar{\rho}] \setminus \{\bar{\rho}\}$ as
\begin{equation*}
    F(\rho) := \frac{\Pi(\rho)}{(\rho - \bar{\rho})^2} = \frac{\frac{1}{\gamma}\rho^\gamma - \frac{1}{\gamma}\bar{\rho}^\gamma - \bar{\rho}^{\gamma-1}(\rho - \bar{\rho})}{(\rho - \bar{\rho})^2}.
\end{equation*}
Direct calculations yield that
\begin{equation*}
    \lim_{\rho \to \bar{\rho}} F(\rho) = \frac{\Pi''(\bar{\rho})}{2} = \frac{\gamma-1}{2}\bar{\rho}^{\gamma-2} > 0.
\end{equation*}
Moreover, at the vacuum state, we have $\lim_{\rho \to 0^+} F(\rho) = \frac{\gamma-1}{\gamma}\bar{\rho}^{\gamma-2} > 0$. Since $\Pi(\rho)$ is strictly convex with its global minimum at $\bar{\rho}$, $F(\rho)$ is strictly positive for all $\rho \in [0, 4\bar{\rho}]\setminus \{\bar{\rho}\}$. By defining $F(\bar{\rho}) \triangleq \frac{\gamma-1}{2}\bar{\rho}^{\gamma-2}$, we deduce that $F(\rho)$ is a continuous and positive function on the compact interval $[0, 4\bar{\rho}]$ (by defining the value at $\bar{\rho}$). Consequently, by the extreme value theorem, there exist positive constants $c_1$ and $c_2$ such that $c_1 \le F(\rho) \le c_2$ holds uniformly on this interval, which implies the desired estimates.
\end{proof}
\begin{lemma}\label{lem:large_density_control}
Let $\gamma > 1$ and $\bar{\rho} > 0$. For the high-density region where $\rho \ge 4\bar{\rho}$, the potential energy density $\Pi(\rho)$ is equivalent to $(\rho - \bar{\rho})^\gamma$ with explicit constants
\begin{equation}\label{eq:large_rho_bounds}
    C_1(\gamma) (\rho - \bar{\rho})^\gamma \le \Pi(\rho) \le C_2(\gamma) (\rho - \bar{\rho})^\gamma,
\end{equation}
where the positive constants $C_1(\gamma)$ and $C_2(\gamma)$ are defined by
\begin{equation*}
    C_1(\gamma) := \frac{1}{\gamma} - \left(\frac{1}{4}\right)^{\gamma-1}, \quad C_2(\gamma) := \frac{1}{\gamma}\left(\frac{4}{3}\right)^\gamma.
\end{equation*}
\end{lemma}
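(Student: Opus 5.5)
The plan is to bound both sides by elementary comparisons that use only $\rho\ge 4\bar\rho$, which gives $\bar\rho\le \rho/4$ and $\rho-\bar\rho\ge \tfrac34\rho$. First I expand
\begin{equation*}
\Pi(\rho)=\frac1\gamma\rho^\gamma-\bar\rho^{\gamma-1}\rho+\Big(1-\frac1\gamma\Big)\bar\rho^\gamma .
\end{equation*}
Every estimate is then written in terms of $\rho^\gamma$ and finally converted to $(\rho-\bar\rho)^\gamma$ using $\tfrac34\rho\le\rho-\bar\rho\le\rho$.

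\emph{Upper bound.} Since $\rho\ge\bar\rho$, the terms $-\frac1\gamma\bar\rho^\gamma-\bar\rho^{\gamma-1}(\rho-\bar\rho)$ in the original definition of $\Pi$ are nonpositive. Hence $\Pi(\rho)\le\frac1\gamma\rho^\gamma$. From $\rho-\bar\rho\ge\frac34\rho$ we get $\rho\le\frac43(\rho-\bar\rho)$. Combining these,
\begin{equation*}
\Pi(\rho)\le\frac1\gamma\Big(\frac43\Big)^\gamma(\rho-\bar\rho)^\gamma=C_2(\gamma)(\rho-\bar\rho)^\gamma .
\end{equation*}

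\emph{Lower bound.} In the expanded form, the constant term $(1-\frac1\gamma)\bar\rho^\gamma$ is nonnegative because $\gamma>1$, so I drop it. For the linear term, $\bar\rho^{\gamma-1}\le(\rho/4)^{\gamma-1}$ because $\gamma-1>0$, and therefore $\bar\rho^{\gamma-1}\rho\le 4^{1-\gamma}\rho^\gamma$. This yields $\Pi(\rho)\ge\big(\frac1\gamma-4^{1-\gamma}\big)\rho^\gamma=C_1(\gamma)\rho^\gamma$. Provided $C_1(\gamma)>0$, the inequality $\rho\ge\rho-\bar\rho\ge0$ then gives $\Pi(\rho)\ge C_1(\gamma)(\rho-\bar\rho)^\gamma$.

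\emph{Positivity of $C_1$.} This is the only step that is not immediate, and I expect it to be the main (mild) obstacle. The condition $C_1(\gamma)>0$ is equivalent to
\begin{equation*}
g(\gamma):=(\gamma-1)\ln4-\ln\gamma>0 .
\end{equation*}
We have $g(1)=0$, and $g'(\gamma)=\ln 4-\frac1\gamma>1-\frac1\gamma>0$ for $\gamma>1$, using $\ln4>1$. So $g$ is strictly increasing on $[1,\infty)$ and $g(\gamma)>0$ for every $\gamma>1$, which gives $C_1(\gamma)>0$ and completes the argument.
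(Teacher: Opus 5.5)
Your proposal is correct and follows the paper's proof essentially step for step: the upper bound comes from $\Pi(\rho)\le\frac1\gamma\rho^\gamma$ together with $\rho\le\frac43(\rho-\bar\rho)$, and the lower bound comes from dropping $\frac{\gamma-1}{\gamma}\bar\rho^\gamma$, using $(\bar\rho/\rho)^{\gamma-1}\le 4^{1-\gamma}$, and then $\rho^\gamma\ge(\rho-\bar\rho)^\gamma$. The only cosmetic difference is that you check $C_1(\gamma)>0$ through $g(\gamma)=(\gamma-1)\ln 4-\ln\gamma$, whereas the paper uses $f(\gamma)=4^{\gamma-1}-\gamma$; after taking logarithms this is the same monotonicity argument.
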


\begin{proof}
The proof follows from a straightforward algebraic computation based on the definition of $\Pi(\rho)$
\begin{equation*}
    \Pi(\rho) = \frac{1}{\gamma}\rho^\gamma - \frac{1}{\gamma}\bar{\rho}^\gamma - \bar{\rho}^{\gamma-1}(\rho - \bar{\rho}).
\end{equation*}

\textbf{Step 1: The Upper Bound.}
We note that 
\begin{equation*}
    \Pi(\rho) < \frac{1}{\gamma}\rho^\gamma.
\end{equation*}
Since $\rho \ge 4\bar{\rho}$, we have $\rho - \bar{\rho} \ge \rho - \frac{1}{4}\rho = \frac{3}{4}\rho$. This implies the relation $\rho \le \frac{4}{3}(\rho - \bar{\rho})$. Substituting this into the inequality yields the upper bound:
\begin{equation}
    \Pi(\rho) \le \frac{1}{\gamma} \left( \frac{4}{3}(\rho - \bar{\rho}) \right)^\gamma = \frac{1}{\gamma}\left(\frac{4}{3}\right)^\gamma (\rho - \bar{\rho})^\gamma.
\end{equation}

\textbf{Step 2: The Lower Bound.}
We rewrite $\Pi(\rho)$ as
\begin{equation*}
    \Pi(\rho) = \frac{1}{\gamma}\rho^\gamma - \bar{\rho}^{\gamma-1}\rho + \frac{\gamma-1}{\gamma}\bar{\rho}^\gamma.
\end{equation*}
We obtain the inequality
\begin{equation*}
    \Pi(\rho) > \frac{1}{\gamma}\rho^\gamma - \bar{\rho}^{\gamma-1}\rho = \rho^\gamma \left[ \frac{1}{\gamma} - \left(\frac{\bar{\rho}}{\rho}\right)^{\gamma-1} \right].
\end{equation*}
Using the assumption $\rho \ge 4\bar{\rho}$, we have $\frac{\bar{\rho}}{\rho} \le \frac{1}{4}$. Since the function $x \mapsto x^{\gamma-1}$ is increasing for $\gamma > 1$, we have $\left(\frac{\bar{\rho}}{\rho}\right)^{\gamma-1} \le \left(\frac{1}{4}\right)^{\gamma-1}$. Thus,
\begin{equation*}
    \Pi(\rho) \ge \left[ \frac{1}{\gamma} - \left(\frac{1}{4}\right)^{\gamma-1} \right] \rho^\gamma.
\end{equation*}
We observe that the coefficient $C_1(\gamma) = \frac{1}{\gamma} - 4^{1-\gamma}$ is strictly positive for all $\gamma > 1$. Indeed, let $f(\gamma) = 4^{\gamma-1} - \gamma$. Then $f(1)=0$ and $f'(\gamma) = 4^{\gamma-1}\ln 4 - 1 > 0$ for $\gamma \ge 1$. Hence $4^{\gamma-1} > \gamma$, which implies $\frac{1}{\gamma} > 4^{1-\gamma}$.

Finally, since $\bar{\rho} > 0$, we have $\rho > \rho - \bar{\rho}$, and thus $\rho^\gamma > (\rho - \bar{\rho})^\gamma$. Combining this with the estimate above, we arrive at the lower bound:
\begin{equation}
    \Pi(\rho) \ge \left( \frac{1}{\gamma} - \frac{1}{4^{\gamma-1}} \right) (\rho - \bar{\rho})^\gamma.
\end{equation}
The proof of the lemma is complete.
\end{proof}
    Now we consider the following proposition regarding the global energy bounds.
\begin{proposition}
The solution $(\rho, u)$ satisfies the uniform energy estimate
\begin{equation}\label{eq:10}
    \sup_{0 \le t \le T} \int \left( \rho|u|^2 + \Pi(\rho) + \left|\nabla\sqrt{\rho}\right|^2 \right) dx + \int_{0}^{T} \int \rho |\mathcal{D}(u)|^2 dx dt \le C(E_0, \gamma),
\end{equation}
where $C(E_0, \gamma)$ is a positive constant depending on the initial energy and the adiabatic exponent.
\begin{proof}
    Multiplying the momentum equation of $\eqref{NSK_simplified}_2$ by $u$ and integrating the resulting equality over $\mathbb{R}^3$, we have
\begin{equation}\label{eq:energy_balance}
\frac{1}{2}\int_{\mathbb{R}^3} (\rho (|u|^2)_t + \rho u \cdot \nabla |u|^2) dx + 2\int_{\mathbb{R}^3} \rho |\mathcal{D}(u)|^2 dx + \int_{\mathbb{R}^3} \nabla P(\rho) \cdot u dx = \int_{\mathbb{R}^3} \nabla \cdot \mathbf{K} \cdot u dx.
\end{equation}
By a straightforward calculation, we obtain
\begin{equation}
    \frac{d}{dt}\int_{\mathbb{R}^3} \left( \frac{1}{2}\rho |u|^2 + 2|\nabla\sqrt{\rho}|^2+\frac{\gamma}{\gamma-1}\Pi(\rho) \right) dx + 2\int_{\mathbb{R}^3} \rho |D(u)|^2 dx= 0,
\end{equation}
where we have used the fact 
\begin{equation}
\begin{aligned}
    \int_{\mathbb{R}^3} \nabla P(\rho) \cdot u \, dx 
    &=\frac{\gamma}{\gamma-1} \int_{\mathbb{R}^3} \rho \nabla \Pi'(\rho) \cdot u \, dx= \frac{\gamma}{\gamma-1}\int_{\mathbb{R}^3} \nabla \Pi'(\rho) \cdot (\rho u) \, dx \\
    &= -\frac{\gamma}{\gamma-1}\int_{\mathbb{R}^3} \Pi'(\rho) \operatorname{div}(\rho u) \, dx =\frac{\gamma}{\gamma-1} \int_{\mathbb{R}^3} \Pi'(\rho) \partial_t \rho \, dx \quad
    \\
    &=\frac{\gamma}{\gamma-1} \frac{d}{dt} \int_{\mathbb{R}^3} \Pi(\rho) \, dx.
\end{aligned}
\end{equation}
\end{proof}
\end{proposition}
Based on the energy inequality, specifically the bound $\int \Pi(\rho) \, dx \le E_0$, we derive the following control estimates.
\begin{corollary}
    There exists a constant $C$ depending on $\gamma$ and $\bar{\rho}$ such that
    \begin{equation}
    \begin{aligned}
        & \|\rho - \bar{\rho}\|_{L^2(\{\rho \le 4\bar{\rho}\})} \le C E_0^{\frac{1}{2}}, \\
        & \|\rho - \bar{\rho}\|_{L^\gamma(\{\rho > 4\bar{\rho}\})} \le C E_0^{\frac{1}{\gamma}}.
    \end{aligned}
\end{equation}
\end{corollary}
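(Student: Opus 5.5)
The plan is to combine the energy estimate \eqref{eq:10} with the two-sided comparisons for $\Pi$ in Lemma \ref{lem:bounded_density_explicit} and Lemma \ref{lem:large_density_control}. The first step is to note that $\Pi(\rho)\ge 0$ pointwise. Indeed, $\Pi$ is convex, and $\Pi(\bar\rho)=\Pi'(\bar\rho)=0$. Hence \eqref{eq:10}, or directly the energy identity with the factor $\frac{\gamma}{\gamma-1}$ absorbed into the constant, gives for every $t\in[0,T]$
$$\int_{\mathbb{R}^3}\Pi(\rho(x,t))\,dx\le C E_0.$$
In particular, the integral of $\Pi(\rho)$ over any measurable subset of $\mathbb{R}^3$ is bounded by $CE_0$.

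Next I split $\mathbb{R}^3$ into $\{\rho\le 4\bar\rho\}$ and $\{\rho>4\bar\rho\}$. On the first set, $\rho(x,t)\in[0,4\bar\rho]$, so Lemma \ref{lem:bounded_density_explicit} gives $(\rho-\bar\rho)^2\le C\,\Pi(\rho)$ pointwise. Integrating over the set yields
$$\|\rho-\bar\rho\|_{L^2(\{\rho\le4\bar\rho\})}^2\le C\int\Pi(\rho)\,dx\le CE_0,$$
and taking square roots gives the first bound.

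On the second set, Lemma \ref{lem:large_density_control} gives $(\rho-\bar\rho)^\gamma\le C_1(\gamma)^{-1}\Pi(\rho)$. Here $C_1(\gamma)>0$ for $\gamma>1$, and this holds on the closed set $\rho\ge4\bar\rho$, so certainly on $\{\rho>4\bar\rho\}$. Integrating and taking the $1/\gamma$ power gives
$$\|\rho-\bar\rho\|_{L^\gamma(\{\rho>4\bar\rho\})}\le CE_0^{1/\gamma}.$$
The constants depend only on $\gamma$ and $\bar\rho$, and both estimates hold uniformly in $t\in[0,T]$.

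There is no genuine obstacle here; the only point requiring care is bookkeeping. The normalization of $E_0$, with the factor $\frac{\gamma}{\gamma-1}$ in front of $\Pi$, must be absorbed into $C$, which is why the constant may depend on $\gamma$. The endpoint $\gamma=1$ is excluded here, since $C_1(1)=0$; that case would be handled through the logarithmic $\Pi$, as in the preceding remark.
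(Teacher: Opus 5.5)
Your proposal is correct and matches the paper's argument. The paper gives no separate proof; it reads off both bounds from the energy bound $\int \Pi(\rho)\,dx \le E_0$, applying Lemma~\ref{lem:bounded_density_explicit} on $\{\rho\le 4\bar\rho\}$ and Lemma~\ref{lem:large_density_control} on $\{\rho>4\bar\rho\}$, and your added details (pointwise nonnegativity of $\Pi$, which lets you restrict the integral to a subset, and absorbing the factor $\frac{\gamma}{\gamma-1}$ into $C$) are exactly what the paper leaves implicit.
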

The following $L^2$-estimate for $v$ is obtained via standard arguments
\begin{proposition}\label{prop:2.2}
    There exists a constant $C > 0$ depending on $\gamma$ and $E_0$ such that
    \begin{equation}\label{v energy}
        \sup_{0 \le t \le T} \int \rho |v|^2 \, dx + \int_0^T \int \left| \nabla \rho^{\frac{\gamma}{2}} \right|^2 \, dx \, dt + \int_0^T \int \rho |\nabla v|^2 \, dx \, dt \le C.
    \end{equation}
\end{proposition}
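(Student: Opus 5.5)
We will prove \eqref{v energy} by a direct energy estimate on the second equation of the parabolic system \eqref{parabolic_system}, which gives the standard BD-type entropy. First we note that the continuity equation in \eqref{NSK_simplified} can be rewritten as $\rho_t+\nabla\cdot(\rho u)=0$, with $\rho u=\rho v-\nabla\rho$. Combining this with $\rho\partial_t v+\rho u\cdot\nabla v$, we get the conservative form
\[
\partial_t(\rho v)+\nabla\cdot(\rho u\otimes v)+\nabla P(\rho)=\nabla\cdot(\rho\nabla v).
\]
We multiply the $v$-equation of \eqref{parabolic_system} by $v$ and integrate over $\mathbb{R}^3$. The transport part combines with the continuity equation to give exactly $\frac12\frac{d}{dt}\int\rho|v|^2dx$. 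The diffusion term gives $\int\rho|\nabla v|^2dx$ after integration by parts. Boundary terms at infinity vanish because $v\to0$ and $\rho\to\bar\rho$ with the regularity of strong solutions; we work a priori on $[0,T]$ with $T<T^*$.

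The key step is the pressure term $\int\nabla P(\rho)\cdot v\,dx=\int\nabla P\cdot u\,dx+\int\nabla P\cdot\nabla\log\rho\,dx$. The first piece is computed exactly as in the energy proposition: it equals $\frac{\gamma}{\gamma-1}\frac{d}{dt}\int\Pi(\rho)dx$. The second piece is
\[
\int\gamma\rho^{\gamma-2}|\nabla\rho|^2dx=\frac{4}{\gamma}\int|\nabla\rho^{\gamma/2}|^2dx\ge0,
\]
which is precisely the dissipation term appearing in \eqref{v energy}. Hence
\[
\frac{d}{dt}\int\Big(\tfrac12\rho|v|^2+\tfrac{\gamma}{\gamma-1}\Pi(\rho)\Big)dx+\int\rho|\nabla v|^2dx+\frac{4}{\gamma}\int|\nabla\rho^{\gamma/2}|^2dx=0.
\]
We integrate in time. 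Since $\Pi\ge0$ by convexity (Lemma \ref{lem:bounded_density_explicit}), it remains to bound the initial quantity $\int\rho_0|v_0|^2dx$. By \eqref{initial data},
\[
\int\rho_0|v_0|^2\le2\int\rho_0|u_0|^2+2\int\frac{|\nabla\rho_0|^2}{\rho_0}=2\int\rho_0|u_0|^2+8\int|\nabla\sqrt{\rho_0}|^2\le CE_0.
\]
Together with $\int\Pi(\rho_0)\le CE_0$, this gives the claim with $C=C(\gamma,E_0)$.

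The main obstacle, in our view, is justifying the algebra in the conservative rewriting. The mixed term must be handled carefully: since the momentum flux uses $\rho u$ (not $\rho v$), we need $\int\big(\rho v_t\cdot v+\rho u\cdot\nabla v\cdot v\big)=\frac12\frac d{dt}\int\rho|v|^2$. This holds exactly because $\rho_t+\nabla\cdot(\rho u)=0$, so the diffusion in the first equation of \eqref{parabolic_system} must not be double-counted. One should also verify that \eqref{parabolic_system}$_2$ is indeed equivalent to the momentum equation with the Korteweg term; this uses $\nabla\cdot(\rho\nabla\nabla\log\rho)$ together with the identity $\rho\nabla u^T$-type cancellation from the BD relation. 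That equivalence is taken from the transformation stated before, so we rely on it. For $\gamma=1$ the same computation goes through with the modified $\Pi$, and the pressure dissipation becomes $\int|\nabla\rho|^2/\rho=4\int|\nabla\sqrt\rho|^2$.
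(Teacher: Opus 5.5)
Your proposal is correct and follows the argument the paper indicates but omits. You test $\eqref{parabolic_system}_2$ against $v$ and use $\rho_t+\nabla\cdot(\rho u)=0$ for the transport terms. You split $\int\nabla P\cdot v$ into a $u$-part, which gives $\frac{\gamma}{\gamma-1}\frac{d}{dt}\int\Pi(\rho)$, and a $\nabla\log\rho$-part, which gives $\frac{4}{\gamma}\int|\nabla\rho^{\gamma/2}|^2$. You then bound $\int\rho_0|v_0|^2$ by $E_0$ through $|\nabla\rho_0|^2/\rho_0=4|\nabla\sqrt{\rho_0}|^2$.

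One small nit: $\Pi\ge 0$ on all of $[0,\infty)$ follows from the convexity of $\Pi$ with its minimum at $\bar\rho$. It does not follow from Lemma \ref{lem:bounded_density_explicit}, which only covers $[0,4\bar\rho]$.
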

\begin{proof}
    Multiplying $\eqref{parabolic_system}_2$ by $v$ and integrating by parts, we will get \eqref{v energy}. We omit the proof here.
\end{proof}
In fact, we also have the following estimate
\begin{corollary}
 There exists a constant $C > 0$ depending on $\gamma$ and $E_0$ such that
 \begin{equation}
     \int_0^T \int_{\Omega} \left( \rho |\nabla u|^2 + \rho \left| \nabla^2 \log \rho \right|^2 \right) \, dx \, dt \le C.
 \end{equation}
\end{corollary}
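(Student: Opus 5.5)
Since $u=v-\nabla\log\rho$, we have $\nabla u=\nabla v-\nabla^2\log\rho$. This gives the pointwise bound
\begin{equation*}
\rho|\nabla u|^2\le 2\rho|\nabla v|^2+2\rho|\nabla^2\log\rho|^2 .
\end{equation*}
By Proposition \ref{prop:2.2}, the term $\int_0^T\!\!\int\rho|\nabla v|^2$ is bounded. So the whole corollary reduces to bounding $\int_0^T\!\!\int\rho|\nabla^2\log\rho|^2\,dx\,dt$.

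For that term I would use the BD-entropy structure. The energy equality \eqref{eq:10} controls $\int_0^T\!\!\int\rho|\mathcal D(u)|^2$, and Proposition \ref{prop:2.2} controls $\int_0^T\!\!\int\rho|\nabla v|^2$. Expanding
\begin{equation*}
\rho|\nabla v|^2=\rho|\nabla u|^2+2\rho\,\nabla u:\nabla^2\log\rho+\rho|\nabla^2\log\rho|^2 ,
\end{equation*}
and using that $\nabla^2\log\rho$ is symmetric, the cross term equals $2\rho\,\mathcal D(u):\nabla^2\log\rho$. Cauchy--Schwarz then gives
\begin{equation*}
\rho|\nabla^2\log\rho|^2\le 2\rho|\nabla v|^2+C\rho|\mathcal D(u)|^2 .
\end{equation*}
The leftover $\rho|\nabla u|^2$ appears with a plus sign on the right of the expansion, so it can simply be dropped. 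More precisely, from $\sqrt\rho\,\nabla^2\log\rho=\sqrt\rho\,\nabla v-\sqrt\rho\,\nabla u$ and symmetrization we get
\begin{equation*}
\sqrt\rho\,\nabla^2\log\rho=\sqrt\rho\,\mathrm{Sym}(\nabla v)-\sqrt\rho\,\mathcal D(u),
\end{equation*}
hence
\begin{equation*}
\|\sqrt\rho\,\nabla^2\log\rho\|_{L^2}\le\|\sqrt\rho\,\nabla v\|_{L^2}+\|\sqrt\rho\,\mathcal D(u)\|_{L^2} .
\end{equation*}
Integrating in time and applying \eqref{eq:10} and \eqref{v energy} gives the $\nabla^2\log\rho$ bound, and substituting back into the first inequality gives the $\nabla u$ bound.

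The main point to check is that the derivation of \eqref{v energy} does not itself rely on the bound being proved. Multiplying $\eqref{parabolic_system}_2$ by $v$ uses only the continuity equation and integration by parts (the pressure term produces $\int|\nabla\rho^{\gamma/2}|^2$ and a time derivative of $\Pi$). I would also verify that the boundary terms at infinity vanish under the far-field condition \eqref{far field}, using the regularity of the local strong solution. With that checked, the argument is essentially algebraic.
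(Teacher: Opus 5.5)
Your proof is correct, but it takes a different route from the paper's.

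\textbf{The paper's route.} The paper never uses the viscous dissipation $\int_0^T\!\int\rho|\mathcal D(u)|^2$ from \eqref{eq:10}. It expands $\int\rho|\nabla v|^2$ and shows that the cross term $2\int\rho\,\nabla^2\log\rho:\nabla u$ is an exact time derivative, namely $4\frac{d}{dt}\int|\nabla\sqrt\rho|^2$. This uses $\nabla\cdot(\rho\nabla^2\log\rho)=2\rho\nabla(\Delta\sqrt\rho/\sqrt\rho)$ together with the continuity equation. Integrating in time gives the identity
\[
\int_0^T\!\!\int\bigl(\rho|\nabla u|^2+\rho|\nabla^2\log\rho|^2\bigr)\,dx\,dt
=\int_0^T\!\!\int\rho|\nabla v|^2\,dx\,dt+4\|\nabla\sqrt{\rho_0}\|_{L^2}^2-4\|\nabla\sqrt{\rho(T)}\|_{L^2}^2 .
\]
So both terms are controlled at once, with constant one, by \eqref{v energy} and the initial capillary energy.

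\textbf{Your route.} You replace this integration by parts with a pointwise observation. Because the Hessian is symmetric, $\nabla^2\log\rho=\mathrm{Sym}(\nabla v)-\mathcal D(u)$, and $|\mathrm{Sym}(\nabla v)|\le|\nabla v|$. The Hessian term is therefore bounded by the two dissipations from \eqref{v energy} and \eqref{eq:10}. Writing $\nabla u=\nabla v-\nabla^2\log\rho$ then gives the full-gradient bound.

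\textbf{Comparison.} Your argument is purely algebraic once the two a priori bounds are in hand. It needs no further integration by parts and no time regularity of $\nabla\sqrt\rho$. The cost is slightly worse constants and reliance on one more a priori estimate.

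The two proofs rest on the same structure. The quantum-potential identity the paper uses here is exactly what produces the $2|\nabla\sqrt\rho|^2$ term in \eqref{eq:10}, so you are using it implicitly through that estimate.

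Your check that \eqref{v energy} is derived independently of this corollary, by testing $\eqref{parabolic_system}_2$ with $v$, is correct.
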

\begin{proof}
In fact, we have the following identity
    $$\begin{aligned}
\int \rho |\nabla v|^2 \, dx 
&= \int \rho |\nabla u|^2 + \rho \left| \nabla^2 \log \rho \right|^2 \, dx + 2 \int \rho \nabla^2 \log \rho : \nabla u \, dx 
\\
&= \int \rho |\nabla u|^2 + \rho \left| \nabla^2 \log \rho \right|^2 \, dx - 2 \int \nabla \cdot (\rho \nabla^2 \log \rho) \cdot u \, dx 
\\
&= \int \rho |\nabla u|^2 + \rho \left| \nabla^2 \log \rho \right|^2 \, dx - 2 \int \left[ 2\rho \nabla \left( \frac{\Delta \sqrt{\rho}}{\sqrt{\rho}} \right) \right] \cdot u \, dx 
\\
&= \int \rho |\nabla u|^2 + \rho \left| \nabla^2 \log \rho \right|^2 \, dx + 4 \int \nabla \cdot (\rho u) \left( \frac{\Delta \sqrt{\rho}}{\sqrt{\rho}} \right) \, dx 
\\
&= \int \rho |\nabla u|^2 + \rho \left| \nabla^2 \log \rho \right|^2 \, dx - 4 \int (\partial_t \rho) \frac{\Delta \sqrt{\rho}}{\sqrt{\rho}} \, dx 
\\
&= \int \rho |\nabla u|^2 + \rho \left| \nabla^2 \log \rho \right|^2 \, dx - 8 \int \partial_t \sqrt{\rho} \Delta \sqrt{\rho} \, dx 
\\
&= \int \rho |\nabla u|^2 + \rho \left| \nabla^2 \log \rho \right|^2 \, dx + 4 \frac{d}{dt} \int \left| \nabla \sqrt{\rho} \right|^2 \, dx.
\end{aligned}$$
Thus, the corollary follows easily.
\end{proof}
The following estimate allows us to obtain the bound on $\|\nabla^2 \sqrt{\rho}\|_{L^2L^2}$.
\begin{corollary}
    For any smooth positive function $\rho$, the following inequalities hold $$\int \rho \left|\nabla^2 \log \rho\right|^2 \, dx \ge \frac{1}{7} \int \left|\nabla^2 \sqrt{\rho}\right|^2 \, dx,$$and$$\int \rho \left|\nabla^2 \log \rho\right|^2 \, dx \ge \frac{1}{8} \int \left|\nabla \rho^{1/4}\right|^4 \, dx.$$
\end{corollary}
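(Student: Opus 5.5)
The plan is to rewrite everything in terms of $w:=\sqrt{\rho}$ and reduce both inequalities to elementary relations between three scalar integrals. Since $\log\rho=2\log w$, we have
$$\nabla^2\log\rho=2\Big(\frac{\nabla^2 w}{w}-\frac{\nabla w\otimes\nabla w}{w^2}\Big),\qquad \rho\,|\nabla^2\log\rho|^2=4\Big|\nabla^2 w-\frac{\nabla w\otimes\nabla w}{w}\Big|^2 .$$
Set $A=\nabla^2 w$, $B=\nabla w\otimes\nabla w/w$, and
$$X=\int|A|^2,\quad Y=\int|B|^2=\int\frac{|\nabla w|^4}{w^2},\quad Z=\int A:B .$$
Then $\int\rho|\nabla^2\log\rho|^2=4Q$ with $Q:=X+Y-2Z$. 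With $z=\rho^{1/4}$ we have $\nabla w=2z\nabla z$, so $|\nabla\rho^{1/4}|^4=|\nabla w|^4/(16w^2)$. The two claims therefore read $4Q\ge X/7$ and $4Q\ge Y/128$. All integrations by parts below are justified because $\rho$ is positive, smooth, and tends to $\bar\rho>0$ with $\rho-\bar\rho\in H^3$, so the boundary terms vanish.

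\textbf{Step 1: an identity for $Z$.} I would use $\partial_{ij}w\,\partial_i w=\frac12\partial_j|\nabla w|^2$ and integrate by parts against $\partial_j w/w$. This gives
$$Z=-\tfrac12 W+\tfrac12 Y,\qquad W:=\int\frac{|\nabla w|^2\Delta w}{w}.$$
By Cauchy--Schwarz together with $|\Delta w|\le\sqrt N|\nabla^2 w|$ (where $N=3$), we get $|W|\le\sqrt N\sqrt{XY}$. Hence $Y-2Z=W\ge-\sqrt N\sqrt{XY}$. Cauchy--Schwarz also gives $Z\le\sqrt{XY}$.

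\textbf{Step 2: lower bound by $X$.} Put $t=\sqrt{Y/X}$. The two facts from Step 1 give
$$Q/X\ge\max\{(1-t)^2,\;1-\sqrt N\,t\}.$$
Minimizing over $t\ge0$, the two curves cross at $t=2-\sqrt N$, where the value is $1-\sqrt3(2-\sqrt3)=2\sqrt3-3\approx0.46$. Thus $Q\ge(2\sqrt3-3)X$, so $4Q\ge1.8X\ge X/7$. This proves the first inequality, since $\int|\nabla^2\sqrt\rho|^2=X$.

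\textbf{Step 3: lower bound by $Y$.} The bound of Step 2 degenerates if $Y\gg X$, so I would first show $Y\lesssim X$ by a second integration by parts. Writing $\partial_j w/w^2=-\partial_j(1/w)$, we get
$$Y=\int\frac{2\,\partial_{ij}w\,\partial_iw\,\partial_jw+|\nabla w|^2\Delta w}{w}\le(2+\sqrt N)\sqrt{XY}.$$
Hence $Y\le(2+\sqrt3)^2X\le14X$. Combining with Step 2 gives $4Q\ge\frac{4(2\sqrt3-3)}{14}\,Y\ge Y/8\ge Y/128$, which is the second inequality.

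\textbf{Main obstacle.} The only delicate point is that the naive triangle inequality $\|A-B\|\ge\|A\|-\|B\|$ is useless, because $\|B\|$ can be as large as $(2+\sqrt3)\|A\|$. One must genuinely exploit the integration-by-parts identity for the cross term $Z$ to prevent cancellation between $A$ and $B$. I expect the case analysis in $t$ in Step 2 to be the step that needs care; the constants obtained there are far better than the stated $1/7$ and $1/8$.
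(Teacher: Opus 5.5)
Your Step 2 does not prove what it claims. The function $t\mapsto\max\{(1-t)^2,\,1-\sqrt3\,t\}$ is not minimized at the crossing point $t=2-\sqrt3$. Both branches are decreasing there. For $t>2-\sqrt3$ the larger branch is $(1-t)^2$, since $(1-t)^2-(1-\sqrt3 t)=t\,(t-(2-\sqrt3))>0$, and this branch vanishes at $t=1$. So the minimum is $0$, and you only obtain $Q\ge 0$. Separately, the value at the crossing is $1-\sqrt3(2-\sqrt3)=4-2\sqrt3$, not $2\sqrt3-3$.

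This cannot be repaired by more careful bookkeeping. Every relation you collect is satisfied by $X=Y=Z$, which gives $Q=0$:
\begin{itemize}
\item $Z\le\sqrt{XY}$;
\item $Y-2Z=W$ with $|W|\le\sqrt3\sqrt{XY}$;
\item $Y\le(2+\sqrt3)^2X$.
\end{itemize}
No combination of these facts can give a positive lower bound on $Q$. Step 3 also fails, since it feeds on Step 2.

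The information is lost when you expand $\Delta\log w=\Delta w/w-|\nabla w|^2/w^2$ and control $\Delta w$ by $X$. The missing idea, which is exactly what the paper does, is to bound the cross term by $Q$ itself. Your Step 1 computation already shows $2Z=Y-W=-\int|\nabla w|^2\,\Delta\log w$. Since $w^2|\nabla^2\log w|^2=|A-B|^2$ pointwise, Cauchy--Schwarz together with $|\Delta\log w|\le\sqrt3\,|\nabla^2\log w|$ gives $2Z\le\sqrt{Y}\bigl(\int w^2|\Delta\log w|^2\bigr)^{1/2}\le\sqrt{3YQ}$. Therefore
\[
X+Y=Q+2Z\le Q+\sqrt{3YQ}\le 7Q+\tfrac18 Y,
\]
that is, $X+\tfrac78Y\le 7Q$. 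This yields $4Q\ge\tfrac47X$ and $4Q\ge\tfrac12Y=8\int|\nabla\rho^{1/4}|^4$, both stronger than the stated inequalities. This inequality does exclude the bad configuration: $Q=0$ would force $X+Y\le 0$.
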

\begin{proof}
    This lemma was originally established by Jüngel \cite{Jungel2010}. We provide a concise derivation below. By direct computation, we have:$$\sqrt{\rho} \nabla^2 \log \sqrt{\rho} = \sqrt{\rho} \nabla \left( \frac{\nabla \sqrt{\rho}}{\sqrt{\rho}} \right) = \nabla^2 \sqrt{\rho} - \frac{\nabla \sqrt{\rho} \otimes \nabla \sqrt{\rho}}{\sqrt{\rho}}.$$ We define the following integral quantities
    $$D := \int \rho \left|\nabla^2 \log \sqrt{\rho}\right|^2 \, dx, \quad
A := \int \left|\nabla^2 \sqrt{\rho}\right|^2 \, dx, \quad
B := \int \frac{\left|\nabla \sqrt{\rho}\right|^4}{\rho} \, dx.$$ Specifically, since $\nabla \rho^{1/4} = \frac{1}{2} \rho^{-1/4} \nabla \sqrt{\rho}$, it follows that $B = 16 \int |\nabla \rho^{1/4}|^4 \, dx$. Therefore, we have
$$D = A + B - I,$$
where $I$ is given by
$$I = 2 \int \nabla^2 \sqrt{\rho} : \left( \frac{\nabla \sqrt{\rho} \otimes \nabla \sqrt{\rho}}{\sqrt{\rho}} \right) \, dx.$$
To estimate $I$, we employ integration by parts 
$$2I = -2 \int{|\nabla \sqrt{\rho}|^2} \Delta \log \sqrt{\rho} \, dx.$$Applying the Cauchy-Schwarz inequality, we obtain:$$|2I| \le 2 \left( \int \frac{|\nabla \sqrt{\rho}|^4}{\rho} \, dx \right)^{1/2} \left( \int \rho |\Delta \log \sqrt{\rho}|^2 \, dx \right)^{1/2}.$$In three spatial dimensions ($d=3$), we use the trace inequality $|\text{tr}(M)| \le \sqrt{3}|M|$ to bound the Laplacian by the Hessian, i.e., $|\Delta \log \sqrt{\rho}| \le \sqrt{3} |\nabla^2 \log \sqrt{\rho}|$. Consequently:$$|2I| \le 2\sqrt{3} \sqrt{B}\sqrt{D}.$$Substituting this estimate back into the relation $A + B = D + I$, we have:$$A + B \le D + \sqrt{3}\sqrt{B}\sqrt{D}.$$We now apply Young's inequality with $\varepsilon$, we get:$$\sqrt{3}\sqrt{D}\sqrt{B} \le 6D + \frac{1}{8}B.$$Thus,$$A + B \le D + 6D + \frac{1}{8}B = 7D + \frac{1}{8}B.$$Rearranging the terms, we arrive at:$$A + \frac{7}{8}B \le 7D.$$This inequality implies the two desired results.
\end{proof}
Combining the preceding lemmas, we derive the following crucial estimate.
\begin{proposition}\label{lem:H1_bound}
For all $T>0$, we have
\begin{equation}\label{eq:H1_rho_bound}
    \|\sqrt{\rho} - \sqrt{\bar{\rho}}\|_{L^{\infty}_T H^1} + \|\rho - \bar{\rho}\|_{L^\infty_T L^2} \le C(1 + E_0),
\end{equation}
where $C$ is a positive constant depending on $\bar{\rho}$ and $\gamma$, but independent of $T$.
\end{proposition}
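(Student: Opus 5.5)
The proof uses only the energy bound \eqref{eq:10} and the equivalences for $\Pi$ in Lemma \ref{lem:bounded_density_explicit} and Lemma \ref{lem:large_density_control}. The gradient part is immediate: \eqref{eq:10} gives $\sup_t\|\nabla(\sqrt\rho-\sqrt{\bar\rho})\|_{L^2}^2=\sup_t\|\nabla\sqrt\rho\|_{L^2}^2\le C(E_0,\gamma)$. It remains to bound $\|\sqrt\rho-\sqrt{\bar\rho}\|_{L^2}$ and $\|\rho-\bar\rho\|_{L^2}$ uniformly in $t$. For this we split $\mathbb{R}^3$ into $\{\rho\le 4\bar\rho\}$ and $\{\rho>4\bar\rho\}$.

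\emph{Low-density region.} On $\{\rho\le4\bar\rho\}$, Lemma \ref{lem:bounded_density_explicit} gives $|\rho-\bar\rho|^2\le C\Pi(\rho)$, so $\|\rho-\bar\rho\|_{L^2(\{\rho\le4\bar\rho\})}^2\le CE_0$. Since $|\sqrt\rho-\sqrt{\bar\rho}|=|\rho-\bar\rho|/(\sqrt\rho+\sqrt{\bar\rho})\le|\rho-\bar\rho|/\sqrt{\bar\rho}$, the same bound holds for $\sqrt\rho-\sqrt{\bar\rho}$ on this set.

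\emph{High-density region for $\sqrt\rho$.} On $\{\rho>4\bar\rho\}$ we have $\sqrt\rho-\sqrt{\bar\rho}\ge\sqrt{\bar\rho}$, so $\Pi(\rho)\ge C_1(\gamma)(\rho-\bar\rho)^\gamma\ge c>0$ there. Chebyshev then gives $|\{\rho>4\bar\rho\}|\le CE_0$. The function $f:=(\sqrt\rho-2\sqrt{\bar\rho})_+$ has $\nabla f\in L^2$ and support in a set of measure at most $CE_0$, and it vanishes at infinity because $\rho\to\bar\rho$. By Sobolev ($\dot H^1\hookrightarrow L^6$) and Hölder,
\[
\|f\|_{L^2}\le|\{\rho>4\bar\rho\}|^{1/3}\|f\|_{L^6}\le CE_0^{1/3}\|\nabla\sqrt\rho\|_{L^2}.
\]
Since $\sqrt\rho-\sqrt{\bar\rho}\le f+\sqrt{\bar\rho}$ on this set, we get $\|\sqrt\rho-\sqrt{\bar\rho}\|_{L^2(\{\rho>4\bar\rho\})}\le C(1+E_0)$, and the $H^1$ part is complete.

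\emph{High-density region for $\rho$ (main obstacle).} The difficulty is $\|\rho-\bar\rho\|_{L^2(\{\rho>4\bar\rho\})}$ when $\gamma<2$, since then $\Pi$ only controls $L^\gamma$. I would write $\rho=(\sqrt\rho)^2$ and use $\rho-\bar\rho\le 2(\sqrt\rho-\sqrt{\bar\rho})^2$ on $\{\rho>4\bar\rho\}$, which holds because there $\sqrt\rho+\sqrt{\bar\rho}\le 2(\sqrt\rho-\sqrt{\bar\rho})$ up to a constant. The task therefore reduces to bounding $\sqrt\rho-\sqrt{\bar\rho}$ in $L^4$ on this set. In $\mathbb{R}^3$, interpolating between $L^2$ and $L^6$ bounds the $L^4$ norm of $g:=(\sqrt\rho-\sqrt{\bar\rho})\mathbf 1_{\{\rho>4\bar\rho\}}$; more cleanly, apply Sobolev to $(\sqrt\rho-\sqrt{\bar\rho})_+$, whose gradient is bounded by $\|\nabla\sqrt\rho\|_{L^2}$. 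Together with the $L^2$ bound from the previous step this gives $\|g\|_{L^4}^4\le\|g\|_{L^2}\|g\|_{L^6}^3\le C(1+E_0)^{2}$.

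Combining the two regions yields $\|\rho-\bar\rho\|_{L^\infty_TL^2}\le C(1+E_0)$ up to the power of $E_0$. If the clean linear dependence stated in \eqref{eq:H1_rho_bound} is required, the powers should be absorbed by treating $E_0\le1$ and $E_0>1$ separately or by a sharper interpolation; I expect this bookkeeping of exponents to be the only delicate point. All constants depend only on $\bar\rho$, $\gamma$ and the Sobolev constant, never on $T$.
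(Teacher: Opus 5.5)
Your split into $\{\rho\le 4\bar\rho\}$ and $\{\rho>4\bar\rho\}$ and your final $L^2$–$L^6$ interpolation are essentially the paper's argument. As written, however, the proposal does not prove the stated bound: the dependence on $E_0$ comes out superlinear, and the remedy you suggest cannot fix it.

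The culprit is your $L^2$ estimate on $\{\rho>4\bar\rho\}$. Sobolev plus Hölder give $\|f\|_{L^2}\le C|\{\rho>4\bar\rho\}|^{1/3}\|\nabla\sqrt\rho\|_{L^2}\le CE_0^{1/3}E_0^{1/2}=CE_0^{5/6}$. So you only get $\|\sqrt\rho-\sqrt{\bar\rho}\|_{L^2}^2\le C(E_0+E_0^{5/3})$. This is harmless for the $H^1$ part, since $5/6<1$.

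It does, however, propagate into the $L^4$ step. With $\|g\|_{L^6}\le C\|\nabla\sqrt\rho\|_{L^2}\le CE_0^{1/2}$ you get $\|g\|_{L^4}^4\le\|g\|_{L^2}\|g\|_{L^6}^3\le C(E_0^{2}+E_0^{7/3})$. Hence $\|\rho-\bar\rho\|_{L^2(\{\rho>4\bar\rho\})}\le C(E_0+E_0^{7/6})$.

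Your claim $\|g\|_{L^4}^4\le C(1+E_0)^2$ also does not follow from the inputs you list: $\|g\|_{L^2}\le C(1+E_0)$ and $\|g\|_{L^6}\le CE_0^{1/2}$ give $C(1+E_0)^{5/2}$. Splitting into $E_0\le 1$ and $E_0>1$ only absorbs powers below $1$. An excess power like $E_0^{7/6}$ lives exactly in the regime $E_0>1$, so no $E_0$-independent $C$ bounds it by $C(1+E_0)$.

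The missing ingredient is an $L^2$ bound for $\sqrt\rho-\sqrt{\bar\rho}$ on $\{\rho>4\bar\rho\}$ that is linear in $E_0$, and it needs no Sobolev inequality. On that set $(\sqrt\rho-\sqrt{\bar\rho})^2\le\rho$. Moreover $\int_{\{\rho>4\bar\rho\}}\rho\,dx\le\|\rho-\bar\rho\|_{L^\gamma(\{\rho>4\bar\rho\})}|\{\rho>4\bar\rho\}|^{1-1/\gamma}+\bar\rho|\{\rho>4\bar\rho\}|\le CE_0$. This is exactly what the paper does; equivalently, use $\Pi(\rho)\ge c(\bar\rho,\gamma)\rho$ on that set.

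Now take $\|\sqrt\rho-\sqrt{\bar\rho}\|_{L^2}^2\le CE_0$, together with $\|\nabla\sqrt\rho\|_{L^2}^2\le E_0/2$ read off directly from the energy identity rather than a generic $C(E_0,\gamma)$. Your own interpolation then gives $\|g\|_{L^4}^4\le CE_0^{1/2}E_0^{3/2}=CE_0^2$. Hence $\|\rho-\bar\rho\|_{L^2(\{\rho>4\bar\rho\})}\le 3\|g\|_{L^4}^2\le CE_0$. The correct constant in $\rho-\bar\rho\le c(\sqrt\rho-\sqrt{\bar\rho})^2$ on that set is $3$, not $2$, which is harmless. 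This is the paper's Gagliardo--Nirenberg step $\|\sqrt\rho-\sqrt{\bar\rho}\|_{L^4}^2\le C\|\sqrt\rho-\sqrt{\bar\rho}\|_{L^2}^{1/2}\|\nabla\sqrt\rho\|_{L^2}^{3/2}$, restricted to the high-density set.

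For fairness: every later use of this proposition in the paper only needs some $T$-independent constant depending on $E_0$, $\bar\rho$ and $\gamma$, and your argument does supply that. The linear form $C(1+E_0)$ asserted in the statement, though, requires the sharper $L^2$ estimate above.
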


\begin{proof}
Let us define the regions as
\begin{equation*}
    \Omega_{1}(t) \triangleq \{x \in \mathbb{R}^3 : 0 \le \rho(x, t) \le 4\bar{\rho}\} \quad \text{and} \quad \Omega_{2}(t) \triangleq \{x \in \mathbb{R}^3 : \rho(x, t) > 4\bar{\rho}\}.
\end{equation*}

\textbf{Step 1: Estimate of $\|\sqrt{\rho} - \sqrt{\bar{\rho}}\|_{L^2}$.}
We split the $L^2$-norm of $\sqrt{\rho} - \sqrt{\bar{\rho}}$ into two parts
\begin{equation}\label{eq:split_L2}
    \|\sqrt{\rho} - \sqrt{\bar{\rho}}\|_{L^2}^2 = \int_{\Omega_{1}(t)} |\sqrt{\rho} - \sqrt{\bar{\rho}}|^2 dx + \int_{\Omega_{2}(t)} |\sqrt{\rho} - \sqrt{\bar{\rho}}|^2 dx.
\end{equation}

For the region $\Omega_{1}(t)$, using the mean value theorem, we have $|\sqrt{\rho} - \sqrt{\bar{\rho}}| \le C_{\bar{\rho}} |\rho - \bar{\rho}|$. Combining this with Lemma \ref{lem:bounded_density_explicit} , we obtain
\begin{equation}
    \int_{\Omega_{1}(t)} |\sqrt{\rho} - \sqrt{\bar{\rho}}|^2 dx \le C \int_{\Omega_{1}(t)} |\rho - \bar{\rho}|^2 dx \le C \int_{\Omega_{1}(t)} \Pi(\rho) dx \le C E_0.
\end{equation}

For the region $\Omega_{2}(t)$, noting that $\rho > 4\bar{\rho}$ implies $\sqrt{\rho} > 2\sqrt{\bar{\rho}}$, we have
\begin{equation}
    |\sqrt{\rho} - \sqrt{\bar{\rho}}| \le \sqrt{\rho} \le \rho / (2\sqrt{\bar{\rho}}).
\end{equation}
Applying Hölder's inequality and Chebyshev's inequality
\begin{equation}
\begin{aligned}
    \int_{\Omega_{2}(t)} |\sqrt{\rho} - \sqrt{\bar{\rho}}|^2 dx &\le \int_{\Omega_{2}(t)} \rho \, dx \\
    &\le \|\rho - \bar{\rho}\|_{L^\gamma(\Omega_{2})} |\Omega_{2}|^{1 - 1/\gamma} + \bar{\rho}|\Omega_{2}|.
\end{aligned}
\end{equation}
From Lemma \ref{lem:large_density_control}, we know $\|\rho - \bar{\rho}\|_{L^\gamma}^\gamma \le C \int \Pi(\rho) dx \le C E_0$.
The measure of the set is bounded by Chebyshev's inequality
\begin{equation}
    |\Omega_{2}(t)| \le \frac{1}{\min_{\rho \in \Omega_{2}} \Pi(\rho)} \int_{\Omega_2} \Pi(\rho) dx = \frac{1}{\bar{\rho}^\gamma \left( \frac{4^\gamma - 1}{\gamma} - 3 \right)}\int \Pi(\rho) dx \le C(\bar{\rho},\gamma) E_0.
\end{equation}
This implies that 
\begin{equation}
    \int_{\Omega_{2}(t)} |\sqrt{\rho} - \sqrt{\bar{\rho}}|^2 dx \le C(\bar{\rho},\gamma) E_0.
\end{equation}
Thus, we deduce that
\begin{equation}\label{eq:sqrt_rho_L2_final}
    \|\sqrt{\rho} - \sqrt{\bar{\rho}}\|_{L^{\infty}_{T}L^2}^2 \le CE_0.
\end{equation}

\textbf{Step 2: Estimate of $\|\rho - \bar{\rho}\|_{L^2}$.}
We start with the identity
\begin{equation}\label{eq:rho_identity}
    \rho - \bar{\rho} = (\sqrt{\rho} - \sqrt{\bar{\rho}})^2 + 2\sqrt{\bar{\rho}}(\sqrt{\rho} - \sqrt{\bar{\rho}}).
\end{equation}
Using the relation \eqref{eq:rho_identity}, we estimate the $L^2$-norm of $\rho - \bar{\rho}$:
\begin{equation}
\begin{aligned}
    \|\rho - \bar{\rho}\|_{L^2} &= \|(\sqrt{\rho} - \sqrt{\bar{\rho}})^2 + 2\sqrt{\bar{\rho}}(\sqrt{\rho} - \sqrt{\bar{\rho}})\|_{L^2} \\
    &\le \|\sqrt{\rho} - \sqrt{\bar{\rho}}\|_{L^4}^2 + 2\sqrt{\bar{\rho}}\|\sqrt{\rho} - \sqrt{\bar{\rho}}\|_{L^2}.
\end{aligned}
\end{equation}
We now employ Gagliardo-Nirenberg ($\|u\|_{L^4} \le C \|u\|_{L^2}^{1/4} \|\nabla u\|_{L^2}^{3/4}$). Thus,
\begin{equation}
\begin{aligned}
    \|\sqrt{\rho} - \sqrt{\bar{\rho}}\|_{L^4}^2 &\le C \|\sqrt{\rho} - \sqrt{\bar{\rho}}\|_{L^2}^{1/2} \|\nabla(\sqrt{\rho} - \sqrt{\bar{\rho}})\|_{L^2}^{3/2} \\
    &= C \|\sqrt{\rho} - \sqrt{\bar{\rho}}\|_{L^2}^{1/2} \|\nabla \sqrt{\rho}\|_{L^2}^{3/2}.
\end{aligned}
\end{equation}
Recalling the energy definition $E_0 \ge \int |\nabla \sqrt{\rho}|^2 dx$, we have $\|\nabla \sqrt{\rho}\|_{L^2} \le \sqrt{E_0}$.
Substituting \eqref{eq:sqrt_rho_L2_final} into the above inequalities, we obtain
\begin{equation}
    \|\rho - \bar{\rho}\|_{L^{\infty}_TL^2} \le C E_0^{1/4} E_0^{3/4} + CE_0^{1/2} \le C(1 + E_0).
\end{equation}
Finally, noting that $\|\sqrt{\rho} - \sqrt{\bar{\rho}}\|_{H^1}^2 = \|\sqrt{\rho} - \sqrt{\bar{\rho}}\|_{L^2}^2 + \|\nabla \sqrt{\rho}\|_{L^2}^2 \le C(1+E_0) + E_0$, the proof is completed.
\end{proof}
The following proposition establishes the higher integrability of the effective velocity, a result that plays a crucial role in deriving the density upper bound within the framework of Littlewood-Paley theory
\begin{proposition}\label{PRO3.4}
For any $\gamma \in (1, \frac{8}{3})$, there exists $q \in (1, 4)$ such that
\begin{equation}
     \gamma \le \frac{2q+6}{q+2}.
\end{equation}
    For this fixed $q$, there exists a positive constant $C$, depending on $T$, $q$, $\gamma$, $E_0$, and the initial data $\|\rho_0^{1/(q+2)} v_0\|_{L^{q+2}}$, such that
\begin{equation}\label{L^{q+2}}
\sup_{0 \le t \le T} \|\rho^{\frac{1}{q+2}} v\|_{L^{q+2}} \le C.
\end{equation}
\end{proposition}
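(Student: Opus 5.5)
The plan is the Mellet–Vasseur type $L^{q+2}$ estimate applied to the effective velocity, working directly with the parabolic form \eqref{parabolic_system}. The first step is the choice of $q$. The map $q\mapsto \frac{2q+6}{q+2}=2+\frac{2}{q+2}$ is continuous and decreasing on $[1,4]$, and it equals $\frac{8}{3}$ at $q=1$. Hence for every $\gamma\in(1,\frac83)$ there is some $q\in(1,4)$ close to $1$ with $\gamma\le \frac{2q+6}{q+2}$; we fix such a $q$.

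\textbf{Energy identity.} We multiply $\eqref{parabolic_system}_2$ by $|v|^q v$ and integrate over $\mathbb{R}^3$. Because the transport term is written with $u$, we use the mass equation $\rho_t+\nabla\cdot(\rho u)=0$, not the parabolic one, and obtain
\begin{equation*}
\frac{1}{q+2}\frac{d}{dt}\int \rho|v|^{q+2}\,dx+\int \rho|v|^q|\nabla v|^2\,dx+q\int\rho|v|^q\big|\nabla|v|\big|^2\,dx=-\int \nabla P\cdot |v|^q v\,dx .
\end{equation*}
Write $Y(t)=\int\rho|v|^{q+2}dx$. The proposition then reduces to a differential inequality of the form $Y'\le C(1+f(t))(1+Y)$ with $f\in L^1(0,T)$, after which Gronwall's inequality closes the argument. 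The initial value $Y(0)$ is finite by the hypothesis on $\|\rho_0^{1/(q+2)}v_0\|_{L^{q+2}}$.

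\textbf{Pressure term.} We split the integral over $\Omega_1(t)=\{\rho\le 4\bar\rho\}$ and $\Omega_2(t)=\{\rho>4\bar\rho\}$, as in Proposition \ref{lem:H1_bound}.
\begin{itemize}
\item On $\Omega_2$ the vacuum issue is absent. We integrate by parts in the form $\int (P-\bar P)\nabla\cdot(|v|^qv)$; the far-field constant $\bar P=P(\bar\rho)$ must be subtracted. Young's inequality with weight $\rho$ absorbs $\varepsilon\int\rho|v|^q|\nabla v|^2$ and leaves $C\int_{\Omega_2}\rho^{2\gamma-1}|v|^q$. By Hölder this is at most $C\,Y^{q/(q+2)}\,\|\rho\|_{L^{s}(\Omega_2)}^{\sigma}$ for suitable exponents $s,\sigma$ determined by $\gamma$ and $q$.
\item On $\Omega_1$ we keep the gradient form instead. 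We write $\nabla P=2\gamma\rho^{\gamma-\frac12}\nabla\sqrt\rho$, which is bounded by $C|\nabla\sqrt\rho|$ there. Hölder then gives the bound $C\,Y^{\frac{q+1}{q+2}}\,\|\rho^{\gamma-\frac12-\frac{q+1}{q+2}}\nabla\sqrt\rho\|_{L^{q+2}(\Omega_1)}$. The weight is bounded because $\gamma-\frac12-\frac{q+1}{q+2}\ge 0$ for $\gamma\ge 1$ and $q\le 4$ (note $\frac12+\frac{q+1}{q+2}\le\frac12+\frac56<2$; if the exponent is slightly negative we instead rewrite via $\nabla\rho^{1/4}$).
\end{itemize}

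\textbf{Time integrability.} The density norms above must be integrable in time. We collect what is available from the earlier results:
\begin{itemize}
\item $\sqrt\rho-\sqrt{\bar\rho}\in L^\infty_TH^1$ by Proposition \ref{lem:H1_bound}, so $\rho\in L^\infty_T L^3$ on $\Omega_2$;
\item $\nabla^2\sqrt\rho\in L^2L^2$ and $\nabla\rho^{1/4}\in L^4L^4$ by the Jüngel-type corollary combined with the BD-entropy corollary;
\item $\nabla\rho^{\gamma/2}\in L^2L^2$ by Proposition \ref{prop:2.2}.
\end{itemize}
From these, $\nabla\sqrt\rho\in L^2_TL^6$, and Agmon's inequality $\|g\|_{L^\infty}\le C\|g\|_{H^1}^{1/2}\|g\|_{H^2}^{1/2}$ gives $\rho\in L^2_TL^\infty$. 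Interpolating $L^\infty_TL^3$ against $L^2_TL^\infty$ (and $L^\infty_TL^2$ against $L^2_T L^6$ for $\nabla\sqrt\rho$) should produce $f\in L^1(0,T)$.

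\textbf{Main obstacle.} The hard step is the exponent bookkeeping on $\Omega_2$. The power of $\rho$ that appears, roughly $\rho^{(2\gamma-1)(q+2)/2-q/2}$, must be integrable in space-time using only the information listed above, and this is exactly where the restriction $\gamma\le\frac{2q+6}{q+2}$ (hence $\gamma<\frac83$) has to enter. A naive count with $L^\infty_TL^3\cap L^2_TL^\infty$ gives a slightly different threshold. If it falls short, the first thing to try is to keep part of the dissipation $\int\rho|v|^q|\nabla v|^2$: combining it with Sobolev for $\rho^{1/2}|v|^{(q+2)/2}$ gains extra integrability of $v$, which can then be traded against fewer powers of $\rho$. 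The second thing to try is to use $\nabla\rho^{\gamma/2}\in L^2L^2$ directly on $\Omega_2$ instead of integrating by parts. The remaining steps — choosing $q$, deriving the identity, and the low-density part — are routine.
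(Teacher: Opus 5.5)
Your skeleton (testing with $|v|^qv$, splitting at $\rho=4\bar\rho$, integrating by parts on $\Omega_2$, using H\"older to reach $Y^{q/(q+2)}\|\rho\|_{L^\lambda(\Omega_2)}^{2\lambda/(q+2)}$ with $\lambda=(q+2)\gamma-q-1$, then Gronwall) is the paper's. However, the proposal stops exactly at the step that is the proof, and it misjudges that step. The count you call naive closes at precisely the stated threshold. For $\lambda\ge 3$ you have $\int_{\Omega_2}\rho^\lambda\,dx\le \|\rho\|_{L^\infty}^{\lambda-3}\int_{\Omega_2}\rho^3\,dx\le C\|\rho\|_{L^\infty}^{\lambda-3}$, because $\|\sqrt\rho\|_{L^6(\Omega_2)}\le C$. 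So the Gronwall coefficient is $C\|\rho\|_{L^\infty}^{2(\lambda-3)/(q+2)}$. Since $\|\rho\|_{L^\infty}\in L^2(0,T)$ (by your Agmon argument, or by $\|\sqrt\rho\|_{L^\infty}^2\lesssim\|\sqrt\rho\|_{L^6}\|\nabla\sqrt\rho\|_{L^6}$), the coefficient is integrable as soon as $\lambda-3\le q+2$, which is exactly $\gamma\le\frac{2q+6}{q+2}$. For $\lambda<3$ the coefficient is simply bounded, since $\rho>4\bar\rho$ on $\Omega_2$. This is the paper's Gagliardo--Nirenberg step (Case 1) in different notation, so you do not need to recycle the dissipation or use $\nabla\rho^{\gamma/2}$ there. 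Also, on $\Omega_2$ you must integrate by parts against $(P-P(4\bar\rho))_+$, not $P-\bar P$; otherwise a boundary term on $\{\rho=4\bar\rho\}$ survives.

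The genuine error is on $\Omega_1$. Your claim that $\gamma-\frac12-\frac{q+1}{q+2}\ge 0$ for all $\gamma\ge1$ and $q\le 4$ is false. For every $q>1$ one has $\frac12+\frac{q+1}{q+2}>\frac76$, so for $\gamma\in(1,\frac76]$ your weight is a negative power of $\rho$ on a region that may contain near-vacuum points. No lower bound on $\rho$ is available at this stage; producing one is the purpose of Section 4. The $\nabla\rho^{1/4}$ rewrite does make the weight exponent positive for $q$ near $1$, but citing $\nabla\rho^{1/4}\in L^4L^4$ is not enough. Because $\Omega_1$ has infinite measure and $q+2<4$, you would have to split $|\nabla\rho^{1/4}|^{q+2}$ into a power of $|\nabla\rho^{1/4}|$ times a power of $\rho^{-1/4}|\nabla\sqrt\rho|$, also use $\nabla\sqrt\rho\in L^\infty_TL^2$, and check the exponents.

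The paper sidesteps this by letting $q$ depend on $\gamma$. For $\gamma>2$ it takes $q\in(1,2)$ and uses the gradient form on $\Omega_1$, where the weight is harmless. For $\gamma\le2$ it takes $q\ge2$ and integrates by parts on $\Omega_1$ as well, giving $\int_{\Omega_1}\rho^{2\gamma-1}|v|^q\,dx\le C\int\rho|v|^q\,dx\le C(\int\rho|v|^2dx)^{2/q}Y^{1-2/q}$. That interpolation needs $q\ge2$ and the bound of Proposition \ref{prop:2.2}. With $q=2$ in this range, your $\Omega_2$ count above still closes, since $\gamma\le 2<\frac{10}{4}$.
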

\begin{proof}
    Multiplying the $\eqref{parabolic_system}_2$ by $|v|^q v \, (q>1)$ and integrating over $\mathbb{R}^3$, we arrive at
    \begin{equation}\label{3.28}
        \begin{aligned}
\frac{1}{q+2} &\frac{d}{dt} \int \rho |v|^{q+2} \, dx + \int \rho |v|^q |\nabla v|^2 \, dx + q \int \rho |v|^q \big| \nabla |v| \big|^2 \, dx \\
&=- \int \nabla \rho^\gamma \cdot(|v|^q v) \, dx \\
&= -\int_{\rho \le 4\bar{\rho}} \nabla \rho^\gamma \cdot(|v|^q v) \, dx-\int_{\rho \ge 4\bar{\rho}} \nabla \rho^\gamma \cdot(|v|^q v) \, dx. 
\\
&=:I_1+I_2.
\end{aligned}
    \end{equation}
    We divide our analysis into two cases.
     \\
     \textbf{Case 1}.
     When $\gamma \in (2,\frac{8}{3})$, we select $1<q<2$, s.t
     $$  \gamma \le \frac{2q+6}{q+2}.$$
    Applying Hölder's inequality and using $\rho< 4\bar{\rho}$, we obtain
    $$\begin{aligned}
	I_1 & \le C \int_{\Omega_1} \nabla \sqrt{\rho} \cdot \rho^{\gamma -\frac{1}{2}} |v|^{q+1} \, dx \\
	& \le C \|\nabla \sqrt{\rho}\|_{L^6} \left( \int_{\Omega_1} \rho^{\frac{6}{5}(\gamma -\frac{1}{2})} |v|^{\frac{6}{5}(q+1)} \, dx \right)^{\frac{5}{6}} \\
	& \le C(\bar{\rho}) \|\nabla \sqrt{\rho}\|_{L^6} \left( \int_{\Omega_1} \rho |v|^2 \, dx \right)^{\frac{4-q}{6q}} \left( \int_{\Omega_1} \rho |v|^{q+2} \, dx \right)^{\frac{6q-4}{6q}} \\
	& \le C(E_0, \bar{\rho}) \|\nabla \sqrt{\rho}\|_{L^6} \left( \int_{\Omega_1} \rho |v|^{q+2} \, dx \right)^{\frac{6q-4}{6q}} \\
	& \le C(E_0, \bar{\rho}) \|\nabla \sqrt{\rho}\|_{L^6} \left( \int_{\Omega_1} \rho |v|^{q+2} \, dx + 1 \right).
\end{aligned}$$
$I_2$ can be estimated as 
\begin{equation}
    \begin{aligned}
        I_2&=-\int_{\Omega_2} \nabla \left( \rho^{\gamma} - (4\bar{\rho})^{\gamma} \right) \cdot \left( |v|^q v \right) \, dx  \\
&= \int_{\Omega_2} \left( \rho^{\gamma} - (4\bar{\rho})^{\gamma} \right) \nabla \cdot \left( |v|^q v \right) \, dx 
\\
&= \int_{\Omega_2} \rho^{\gamma} |v|^q \nabla \cdot v \, dx + q \int_{\Omega_2} \rho^{\gamma} |v|^{q-1} v \cdot \nabla |v| \, dx 
\\
&\quad - (4\bar{\rho})^{\gamma} \int_{\Omega_2} |v|^q \nabla \cdot v \, dx - q(4\bar{\rho})^{\gamma} \int_{\Omega_2} |v|^{q-1} v \cdot \nabla |v| \, dx 
\\
&\le \frac{1}{4} \int_{\Omega_2} \rho |v|^q |\nabla v|^2 \, dx + \frac{q}{4} \int_{\Omega_2} \rho |v|^q \left| \nabla |v| \right|^2 \, dx 
\\
&\quad + C \int_{\Omega_2} \rho^{2\gamma - 1} |v|^q \, dx + C(4\bar{\rho})^{2\gamma} \int_{\Omega_2} \rho^{-1} |v|^q \, dx 
\\
&\le \frac{1}{4} \int_{\Omega_2} \rho |v|^q |\nabla v|^2 \, dx + \frac{q}{4} \int_{\Omega_2} \rho |v|^q \left| \nabla |v| \right|^2 \, dx + C \int_{\Omega_2} \rho^{2\gamma - 1} |v|^q \, dx.
    \end{aligned}
\end{equation}
Hölder's inequality yields
$$\int_{\Omega_2} \rho^{2\gamma-1} |v|^q \, dx \le \left( \int \rho |v|^{q+2} dx \right)^{\frac{q}{q+2}} \left( \int_{{\Omega}_2(t)} \rho^\lambda dx \right)^{\frac{2}{q+2}} = \left( \int \rho |v|^{q+2} dx \right)^{\frac{q}{q+2}} \|\rho\|_{L^\lambda(\Omega_2(t))}^{\frac{2\lambda}{q+2}},$$
where $\lambda = q\gamma + 2\gamma - q - 1$. To estimate $\|\rho\|_{L^\lambda(\Omega_2(t))}$,
We apply the Gagliardo-Nirenberg interpolation inequality on $\Omega_2(t)$.
Utilizing the finite measure estimate ($|\Omega_2(t)| \le C(\bar{\rho},\gamma)E_0$) and the standard Sobolev embedding $H^1(\mathbb{R}^3) \hookrightarrow L^6(\mathbb{R}^3)$, we arrive at
\begin{equation}
\begin{aligned}
\|\sqrt{\rho}\|_{L^6(\Omega_2(t))} &\le \|\sqrt{\rho} - \sqrt{\bar{\rho}}\|_{L^6(\mathbb{R}^3)} + \|\sqrt{\bar{\rho}}\|_{L^6(\Omega_2(t))} \\
&\le C \|\sqrt{\rho} - \sqrt{\bar{\rho}}\|_{H^1} + C \bar{\rho}^{1/2} |\Omega_2(t)|^{1/6} \le C(E_0).
\end{aligned}
\end{equation}
The interpolation inequality reads
\begin{equation}
    \left\| \rho \right\| _{L^{\lambda}}=\left\| \sqrt{\rho} \right\| _{L^{2\lambda}}^{2}\le \left\| \sqrt{\rho} \right\| _{L^6}^{2\left( \frac{1}{2}+\frac{3}{2\lambda} \right)}\left\| \nabla \sqrt{\rho} \right\| _{L^6}^{2\left( \frac{1}{2}-\frac{3}{2\lambda} \right)}.
\end{equation}
 Here we need $\lambda \ge 3$, i.e. 
 $$\gamma \ge 1+\frac{2}{q+2}.
 $$
 \begin{equation}
 \begin{aligned}
     \int_{\Omega _2}{\rho ^{2\gamma -1}}|v|^q\,dx&\le C\left( E_0,\gamma \right) \left( \int{\rho}|v|^{q+2}dx \right) ^{\frac{q}{q+2}}\left\| \nabla \sqrt{\rho} \right\| _{L^6}^{2\left( \frac{1}{2}-\frac{3}{2\lambda} \right) \frac{2\lambda}{q+2}}
\\
&\le C\left( E_0,\gamma \right) \left( \int{\rho}|v|^{q+2}dx+1 \right) \left\| \nabla \sqrt{\rho} \right\| _{L^6}^{2\left( \frac{1}{2}-\frac{3}{2\lambda} \right) \frac{2\lambda}{q+2}}.
 \end{aligned}
 \end{equation}
 To close the a priori estimates by Gronwall's inequality, using the fact that $\nabla \sqrt{\rho} \in L^2 L^6$, we only need
 $$
 2\left( \frac{1}{2}-\frac{3}{2\lambda} \right) \frac{2\lambda}{q+2}\le 2.$$
 i.e.
 $$\gamma \le \frac{2q+6}{q+2}.$$
 For $\gamma \in (2,\frac{8}{3})$, we can select $1<q<2$ satisfies this condition.\\
\textbf{Case 2.} When $\gamma \in (1,2]$, we select $2\le q <4$ s.t. 
 $$  \gamma \le \frac{q+6}{q+2}.$$
For $I_{1},$ we have
 \begin{equation}
  \begin{aligned}
I_1 &= \int_{\Omega_{1}} \rho^\gamma |v|^q \nabla \cdot v \, dx + q \int_{\Omega_{1}}  \rho^\gamma |v|^{q-1} v \cdot \nabla |v| \, dx \\
&\le \frac{1}{4} \int \rho |v|^q |\nabla v|^2 \, dx + \frac{q}{4} \int \rho |v|^q \big| \nabla |v| \big|^2 \, dx + C \int \rho^{2\gamma-1} |v|^q \, dx \\
&\le \frac{1}{4} \int \rho |v|^q |\nabla v|^2 \, dx + \frac{q}{4} \int \rho |v|^q \big| \nabla |v| \big|^2 \, dx + C \int \rho |v|^q \, dx \\
&\le \frac{1}{4} \int \rho |v|^q |\nabla v|^2 \, dx + \frac{q}{4} \int \rho |v|^q \big| \nabla |v| \big|^2 \, dx + C \left( \int \rho |v|^2 \, dx \right)^{\frac{2}{q}} \left( \int \rho |v|^{q+2} \, dx \right)^{\frac{q-2}{q}} \\
&\le \frac{1}{4} \int \rho |v|^q |\nabla v|^2 \, dx + \frac{q}{4} \int \rho |v|^q \big| \nabla |v| \big|^2 \, dx + C E_0^{\frac{2}{q}} \left( 1 + \int \rho |v|^{q+2} \, dx \right).
\end{aligned}
 \end{equation}
For $I_{2},$ following the derivation in Case 1, we obtain
\begin{equation}
  \begin{aligned}
I_2 &\leq \frac{1}{4} \int_{\Omega_2} \rho |v|^q |\nabla v|^2 dx + \frac{q}{4} \int_{\Omega_2} \rho |v|^q |\nabla |v||^2 dx + C \int_{\Omega_2} \rho^{2\gamma - 1} |v|^q dx.
\end{aligned}
\end{equation}
Since the condition $\gamma \geq 1 + \frac{2}{q+2}$ may not hold in this case, the method used in Case 1 to estimate $\left\lVert \rho \right\rVert_{L^{\lambda}(\Omega_{2}(t))}$ is no longer applicable. Let
\begin{equation}
    \beta= \frac{q\gamma+ 2\gamma- q+4}{2}.
\end{equation}
By the Gagliardo-Nirenberg interpolation inequality,
\begin{equation}
    \left\lVert \rho \right\rVert _{L^{\beta}(\Omega_{2}(t))}\leq C\left\lVert \sqrt{ \rho } \right\rVert ^{2\theta}_{L^{6}(\Omega_2)(t)}\left\lVert \rho^{\frac{\gamma}{2}} \right\rVert _{L^{6}(\Omega_{2}(t))}^{\frac{2}{\gamma}(1-\theta)},
\end{equation}
where $\theta= \frac{3\gamma-\beta}{\beta(\gamma-1)}= \frac{4- q}{q\gamma+ 2\gamma- q+4}\in(0,1).$  From Case 1, we know that $\|\sqrt{\rho}\|_{L^6(\Omega_2(t))} \leq C(E_0).$ Moreover,
\begin{equation}
    \begin{aligned}
\left\lVert \rho^{\frac{\gamma}{2}} \right\rVert _{L^{6}(\Omega_{2}(t))}&\leq \left\lVert \rho^{\frac{\gamma}{2}}- \bar{\rho}^{\frac{\gamma}{2}} \right\rVert _{L^{6}(\mathbb{R}^{3})}+\left\lVert \bar{\rho}^{\frac{\gamma}{2}} \right\rVert _{L^{6}(\Omega_{2}(t))}\\
&\leq C\left\lVert \rho^{\frac{\gamma}{2}}- \bar{\rho}^{\frac{\gamma}{2}} \right\rVert _{H^{1}}+C\bar{\rho}^{\frac{\gamma}{2}}\left| \Omega_{2}(t) \right| ^{\frac{1}{6}}\leq C(E_{0},\gamma)(1+\left\lVert \nabla \rho^{\frac{\gamma}{2}} \right\rVert _{L^{2}}).
\end{aligned}
\end{equation}
Here we have used the fact that $\rho^{\gamma/2} - \bar{\rho}^{\gamma/2} < \rho^{\gamma/2} \leq \left( \frac{4}{3}(\rho - \bar{\rho}) \right)^{\gamma/2} = \left( \frac{4}{3} \right)^{\gamma/2} (\rho - \bar{\rho})^{\gamma/2},$ which implies
\begin{equation}
    \|\rho^{\frac{\gamma}{2}} - \bar{\rho}^{\frac{\gamma}{2}}\|_{L^2(\Omega_2(t))} \leq C \|\rho - \bar{\rho}\|_{L^{\gamma}(\Omega_2(t))}^{\frac{\gamma}{2}} \leq C \left( \int_{\Omega_2(t)} \Pi(\rho) dx \right)^{\frac{1}{2}} \leq C E_0^{\frac{1}{2}}.
\end{equation}
Noting that $\gamma \leq \frac{q + 6}{q + 2}$ guarantees $\lambda\leq \beta,$ we have 
\begin{equation}
    \begin{aligned}
\int_{\Omega_2} \rho^{2\gamma - 1} |v|^q dx &\leq  \left( \int \rho |v|^{q+2} dx \right)^{\frac{q}{q+2}} \|\rho\|_{L^{\lambda}(\Omega_2(t))}^{\frac{2\lambda}{q+2}}\\
&\leq\left( \int \rho |v|^{q+2} dx \right)^{\frac{q}{q+2}}\left\lVert \rho \right\rVert _{L^{\beta}(\Omega_{2}(t))}^{\frac{2\lambda}{q+2}}\left| \Omega_{2}(t) \right| ^{\left(\frac{1}{\lambda}- \frac{1}{\beta}\right)\frac{2\lambda}{q+2}}\\
&\leq C(E_{0},q,\gamma)\left( \int \rho|v|^{q+2} \right)^{\frac{q}{q+2}}\left( 1+\left\lVert \nabla \rho^{\frac{\gamma}{2}}\right\rVert^{\frac{2(1-\theta)}{\gamma} \frac{2\lambda}{q+2}} _{L^{2}} \right) .
\end{aligned}
\end{equation}
To close the a priori estimates via Gronwall's inequality, utilizing the fact that $\nabla \rho^{\frac{\gamma}{2}}\in L^{2}L^{2},$ we require $$\frac{1}{\gamma}(1-\theta) \frac{2\lambda}{q+2}\leq 1,$$ i.e. $$\gamma \leq \frac{q + 6}{q + 2}< \frac{2q+6}{q+2}.$$This completes the proof.
\end{proof}
\begin{remark}
  The absence of boundary terms in the integration by parts for $I_2$ is justified by extending the integral to $\mathbb{R}^3$. By considering the positive part, we have$$\begin{aligned}
I_{2}&= -\int_{\mathbb{R}^{3}}\nabla(\rho^{\gamma}-(4\bar{\rho})^{\gamma})_{+}\cdot(|v|^{q}v)dx\\
&= \int_{\mathbb{R}^{3}}(\rho^{\gamma}-(4\bar{\rho})^{\gamma})_{+}\nabla \cdot(|v|^{q}v)dx\\
&= \int_{\Omega_{2}}(\rho^{\gamma}-(4\bar{\rho})^{\gamma})\nabla \cdot(|v|^{q}v)dx.
\end{aligned}$$A similar argument applies to $I_1$.
\end{remark}
We now apply Littlewood-Paley theory to establish the upper bound of the density.
\begin{proposition}
For any $\gamma \in (1,\frac{8}{3})$, we choose $q \in (1, 4)$ such that $ \gamma \le \frac{2q+6}{q+2}.$ There exists a constant $C > 0$ depending on $T$, $q $, $\gamma$, $E_0$, $\|\rho_0^{\frac{1}{q+2}} v_0\|_{L^{q+2}}$, and $\|\rho_0\|_{L^\infty}$ such that
\begin{equation}\label{eq:19}
    \sup_{0 \le t \le T} \|\rho(t)\|_{L^\infty} \le 2\bar{\rho} + C \| \rho_0-\bar{\rho} \|_{H^{3}} + C.
\end{equation}
\end{proposition}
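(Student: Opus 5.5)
We treat the first equation of \eqref{parabolic_system} as a heat equation for $\rho-\bar\rho$ with source $-\nabla\cdot(\rho v)$:
\begin{equation*}
\partial_t(\rho-\bar\rho)-\Delta(\rho-\bar\rho)=-\nabla\cdot(\rho v).
\end{equation*}
We then close a Besov estimate in which the $L^\infty$ norm of $\rho$ appears on both sides, but with a power strictly less than one on the right. Set $M:=\sup_{0\le t\le T}\|\rho\|_{L^\infty}$. Writing $\rho v=\rho^{\frac{q+1}{q+2}}\cdot\rho^{\frac{1}{q+2}}v$ and using Proposition \ref{PRO3.4}, we get $\|\rho v\|_{L^\infty_TL^{q+2}}\le C M^{\frac{q+1}{q+2}}$. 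Since $L^{q+2}\hookrightarrow B^0_{q+2,\infty}$, this gives
\begin{equation*}
\|\nabla\cdot(\rho v)\|_{\mathcal L^\infty_T B^{-1}_{q+2,\infty}}\le C M^{\frac{q+1}{q+2}}.
\end{equation*}

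Apply Lemma \ref{lemma:heat_maximal_regularity} with $p=q+2$, $r=\infty$, $q_1=q_2=\infty$ and $s=1$. This yields $\|\rho-\bar\rho\|_{\mathcal L^\infty_TB^{1}_{q+2,\infty}}\le C\|\rho_0-\bar\rho\|_{B^1_{q+2,\infty}}+CM^{\frac{q+1}{q+2}}$, where $\|\rho_0-\bar\rho\|_{B^1_{q+2,\infty}}\le C\|\rho_0-\bar\rho\|_{H^3}$ by the embeddings of Lemma \ref{lem:embedding}. Since $q+2>3=N$, the index $1$ exceeds $N/(q+2)$, so we are strictly above the $L^\infty$ scaling. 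For a lower regularity bound, we use Proposition \ref{lem:H1_bound}: $\|\rho-\bar\rho\|_{L^\infty_TL^2}\le C$, and $L^2\hookrightarrow B^0_{2,\infty}\hookrightarrow B^{-3/2}_{\infty,\infty}$. Alternatively we can use $B^{s_1}_{q+2,\infty}$ with $s_1$ negative, obtained by combining the $L^2$ bound with the truncation $\{\rho\le 4\bar\rho\}$. Any fixed-time norm controlled by the energy will do. The upper bound is then
\begin{equation*}
\|\rho-\bar\rho\|_{\mathcal L^\infty_TB^{1-3/(q+2)}_{\infty,\infty}}\le C\|\rho_0-\bar\rho\|_{H^3}+CM^{\frac{q+1}{q+2}}.
\end{equation*}

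Next we apply the optimal interpolation of Remark \ref{Remark 2.2} in $B^{\cdot}_{\infty,\cdot}$, with $s_1=-3/2$ and $s_2=1-3/(q+2)>0$, and $\theta$ chosen so that $\theta s_1+(1-\theta)s_2=0$. Together with $B^0_{\infty,1}\hookrightarrow L^\infty$ this gives
\begin{equation*}
\|\rho-\bar\rho\|_{L^\infty_TL^\infty}\le C\,C_0^{\theta}\big(\|\rho_0-\bar\rho\|_{H^3}+M^{\frac{q+1}{q+2}}\big)^{1-\theta},
\end{equation*}
where $C_0$ comes from the energy. Hence $M\le \bar\rho+C(\|\rho_0-\bar\rho\|_{H^3}+M^{\frac{q+1}{q+2}})^{1-\theta}$. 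Since $(1-\theta)\frac{q+1}{q+2}<1$, Young's inequality absorbs the $M$ term: $CM^{(1-\theta)\frac{q+1}{q+2}}\le \tfrac12 M+C$. We also have $C\|\rho_0-\bar\rho\|_{H^3}^{1-\theta}\le C\|\rho_0-\bar\rho\|_{H^3}+C$. This gives $M\le 2\bar\rho+C\|\rho_0-\bar\rho\|_{H^3}+C$, which is \eqref{eq:19}. The finiteness of $M$ on $[0,T]$ needed for the absorption comes from the local strong solution; that is the a priori framework.

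The main obstacle is getting a genuine gain, with an exponent strictly below one on $M$, while staying in a Chemin--Lerner framework compatible with $L^\infty_T$ in time. Two points need care. First, the low-regularity endpoint must be a quantity bounded by the energy uniformly in $t$, which is why we use $\mathcal L^\infty_T$ and $r=\infty$, where Chemin--Lerner and standard norms agree up to a supremum. Second, $\rho v$ may fail to be in $L^{q+2}$ globally because of the far field; the decomposition $\rho v=\rho^{\frac{q+1}{q+2}}\rho^{\frac1{q+2}}v$ avoids this, since it only uses the weighted integrability from Proposition \ref{PRO3.4}. If the interpolation between $B^{-3/2}_{\infty,\infty}$ and $B^{s_2}_{\infty,\infty}$ turns out inconvenient, we would instead interpolate inside $B^{\cdot}_{q+2,\infty}$ between $B^{s_1}_{q+2,\infty}$ and $B^1_{q+2,\infty}$ to reach $B^{3/(q+2)}_{q+2,1}\hookrightarrow L^\infty$. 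The exponent bookkeeping is the same.
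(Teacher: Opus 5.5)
Your argument is correct, but it places the interpolation at a different point than the paper does.

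The paper applies Lemma~\ref{lemma:heat_maximal_regularity} directly at the level $B^{3/(q+2)+\varepsilon'}_{q+2,1}\hookrightarrow L^\infty$. This forces it to bound the flux $\rho v$ in $\mathcal{L}^\infty_T B^{3/(q+2)-1+\varepsilon'}_{q+2,1}$. It does so by optimally interpolating $\rho v$ between two endpoints:
\begin{itemize}
\item $B^{-3(\frac12-\frac1{q+2})}_{q+2,\infty}$, fed by $\|\rho v\|_{L^2}\le M^{1/2}\|\sqrt\rho\, v\|_{L^2}$ and the $v$-energy of Proposition~\ref{prop:2.2};
\item $B^{0}_{q+2,\infty}$, fed by Proposition~\ref{PRO3.4}.
\end{itemize}
This produces the power $M^{(2+\varepsilon')/3}$.

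You instead use maximal regularity at the natural level $B^{1}_{q+2,\infty}$, where the source needs nothing beyond $L^{q+2}$. You then interpolate the solution $\rho-\bar\rho$ itself between that bound and the $M$-independent bound $\|\rho-\bar\rho\|_{L^\infty_TL^2}\le C(1+E_0)$ of Proposition~\ref{lem:H1_bound}, embedded in $B^{-3/2}_{\infty,\infty}$.

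What your route buys: the low endpoint carries no power of $M$ at all, so the sublinearity $(1-\theta)\frac{q+1}{q+2}<1$ is automatic. The role of $q>1$ also appears transparently as $1-\frac{3}{q+2}>0$. What the paper's route buys: it uses only the energy of $\sqrt\rho\, v$ as the low endpoint, and lands in an $\ell^1$-Besov space embedding into $L^\infty$ in a single step. The price is a slightly more delicate exponent count.

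Two minor points. The truncation on $\{\rho\le 4\bar\rho\}$ you mention is unnecessary, since $\rho-\bar\rho\in L^2$ globally. Also, $\rho v$ does lie in $L^{q+2}$, because $v$ decays. The splitting $\rho v=\rho^{\frac{q+1}{q+2}}\rho^{\frac{1}{q+2}}v$ is needed only to make the $M$-dependence explicit, exactly as in the paper.
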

\begin{proof}
The $\eqref{parabolic_system}_1$ is equivalent to 
\begin{equation}\label{1.1}
\begin{cases}
    \partial_t (\rho-\bar{\rho}) - \Delta (\rho-\bar{\rho}) = -\nabla \cdot (\rho v), \\
    \rho|_{t=0} = \rho_0.
\end{cases}
\end{equation}
Using Lemma \ref{lemma:heat_maximal_regularity}, one gets
\begin{equation}
   \begin{split}
\| \rho - \bar{\rho} \|_{\mathcal{L}_T ^{\infty}B_{q+2,1}^{\frac{3}{q+2}+\varepsilon '}}
&\le C\left( \| \rho_0 - \bar{\rho} \|_{B_{q+2,1}^{\frac{3}{q+2}+\varepsilon '}} + \| \nabla \cdot( \rho v ) \|_{\mathcal{L}_T ^{\infty}B_{q+2,1}^{\frac{3}{q+2}+\varepsilon '-2}} \right)
\\
&\le C\left( \| \rho_0 - \bar{\rho} \|_{B_{q+2,1}^{\frac{3}{q+2}+\varepsilon '}} + \| \rho v \|_{\mathcal{L}_T ^{\infty}B_{q+2,1}^{\frac{3}{q+2}+\varepsilon '-1}} \right).
\end{split}
\end{equation}
On the one hand, by Lemma \ref{lem:embedding}, since $L^{q+2} \hookrightarrow B^0_{q+2,\infty}$, we have
\begin{equation}
    \left\| \rho v \right\| _{B_{q+2,\infty}^{0}}\le C\left\| \rho v \right\| _{L^{q+2}}\le C\left\| \rho \right\| _{L^{\infty}}^{1-\frac{1}{q+2}}\left\| \rho ^{\frac{1}{q+2}}v \right\| _{L^{q+2}},
\end{equation}
on the other hand, $B_{2,2}^{0}\hookrightarrow B_{q+2,\infty}^{-3\left( \frac{1}{2}-\frac{1}{q+2} \right)}$, we obtain 
\begin{equation}
    \left\| \rho v \right\| _{B_{q+2,\infty}^{-3\left( \frac{1}{2}-\frac{1}{q+2} \right)}}\le C\left\| \rho v \right\| _{{B_{2,2}^{0}}}\le C\left\| \rho v \right\| _{L^2}\le C\left\| \rho \right\| _{L^{\infty}}^{1/2}\left\| \rho ^{1/2}v \right\| _{L^2}.
\end{equation}
As long as $q+2>3$ (i.e. $q>1$) and then let $\varepsilon'$ small enough, we arrive at 
\begin{equation}
    -3\left( \frac{1}{2}-\frac{1}{q+2} \right) <\frac{3}{q+2}-1+\varepsilon '<0.
\end{equation}
Using the optimal interpolation in Besov spaces, we get
\begin{equation}
    \left\| \rho v \right\| _{B_{q+2,1}^{\frac{3}{q+2}-1+\varepsilon '}}\le \frac{C}{3\left( \frac{1}{2}-\frac{1}{q+2} \right) \theta \left( 1-\theta \right)}\left\| \rho v \right\| _{B_{q+2,\infty}^{-3\left( \frac{1}{2}-\frac{1}{q+2} \right)}}^{\theta}\left\| \rho v \right\| _{B_{q+2,\infty}^{0}}^{1-\theta},
\end{equation}
where $0<\theta =\frac{1-\varepsilon '-\frac{3}{q+2}}{3\left( \frac{1}{2}-\frac{1}{q+2} \right)}<1$.
According to Remark \ref{Remark 2.2}, we also have the following estimates in Chemin-Lerner type norms
\begin{equation}
    \| \rho v \|_{\mathcal{L}_{T}^{\infty}(B_{q+2,1}^{\frac{3}{q+2}-1+\varepsilon})}
\le \frac{C}{3\left( \frac{1}{2}-\frac{1}{q+2} \right) \theta (1-\theta)}
\| \rho v \|_{\mathcal{L}_{T}^{\infty}(B_{q+2,\infty}^{-3(\frac{1}{2}-\frac{1}{q+2})})}^{\theta}
\| \rho v \|_{\mathcal{L}_{T}^{\infty}(B_{q+2,\infty}^{0})}^{1-\theta}.
\end{equation}
Using Minkowski inequality, we have 
\begin{equation}\label{1.8}
\begin{split}
\| \rho -\bar{\rho} \|_{L_T^{\infty}(B_{q+2,1}^{\frac{3}{q+2}+\varepsilon'})}
&\le \| \rho -\bar{\rho} \|_{\mathcal{L}_T^{\infty}(B_{q+2,1}^{\frac{3}{q+2}+\varepsilon'})} \\
&\le C\left( \| \rho_0-\bar{\rho} \|_{B_{q+2,1}^{\frac{3}{q+2}+\varepsilon'}} + \| \rho v \|_{\mathcal{L}_T^{\infty}(B_{q+2,1}^{\frac{3}{q+2}+\varepsilon'-1})} \right) \\
&\le C\bigg( \| \rho_0-\bar{\rho} \|_{B_{q+2,1}^{\frac{3}{q+2}+\varepsilon'}} + C \| \rho v \|_{\mathcal{L}_{T}^{\infty}(B_{q+2,\infty}^{-3(\frac{1}{2}-\frac{1}{q+2})})}^{\theta} \| \rho v \|_{\mathcal{L}_{T}^{\infty}(B_{q+2,\infty}^{0})}^{1-\theta} \bigg) \\
&= C\bigg( \| \rho_0-\bar{\rho} \|_{B_{q+2,1}^{\frac{3}{q+2}+\varepsilon'}} + C \| \rho v \|_{L_{T}^{\infty}(B_{q+2,\infty}^{-3(\frac{1}{2}-\frac{1}{q+2})})}^{\theta} \| \rho v \|_{L_{T}^{\infty}(B_{q+2,\infty}^{0})}^{1-\theta} \bigg) \\
&\le C\Bigg( \| \rho_0-\bar{\rho} \|_{B_{q+2,1}^{\frac{3}{q+2}+\varepsilon'}} + C \left( \| \rho \|_{L_{T}^{\infty}L^{\infty}}^{1/2} \| \rho^{1/2}v \|_{L_{T}^{\infty}L^2} \right)^{\theta} \\
&\quad \times \left( \| \rho \|_{L_{T}^{\infty}L^{\infty}}^{1-\frac{1}{q+2}} \| \rho^{\frac{1}{q+2}}v \|_{L_{T}^{\infty}L^{q+2}} \right)^{1-\theta} \Bigg).
\end{split}
\end{equation}
Noting that
\begin{equation}
    \frac{\theta}{2}+\left( 1-\frac{1}{q+2} \right) \left( 1-\theta \right) =\frac{2+\varepsilon '}{3}<1,
\end{equation}
and $B_{q+2,1}^{\frac{3}{q+2}+\varepsilon'} \hookrightarrow B^{\frac{3}{q+2}}_{q+2,1} \hookrightarrow B^0_{\infty,1} \hookrightarrow L^\infty$. Using Young's inequality for \eqref{1.8}, we arrive at 
    \begin{align*}
\| \rho \|_{L_{T}^{\infty}L^{\infty}}
&\le \bar{\rho} + \| \rho -\bar{\rho} \|_{L_{T}^{\infty}L^{\infty}} \\
&\le \bar{\rho} + C(q+2) \| \rho -\bar{\rho} \|_{L_{T}^{\infty}B_{q+2,1}^{\frac{3}{q+2}+\varepsilon'}} \\
&\le \bar{\rho} + C(q+2) \bigg( \| \rho_0-\bar{\rho} \|_{B_{q+2,1}^{\frac{3}{q+2}+\varepsilon'}} + C(q+2) \| \rho^{1/2}v \|_{L_{T}^{\infty}L^2}^{\theta} \| \rho^{\frac{1}{q+2}}v \|_{L_{T}^{\infty}L^{q+2}}^{1-\theta} \| \rho \|_{L^{\infty}}^{\frac{2+\varepsilon'}{3}} \bigg) \\
&\le \bar{\rho} + C(q+2) \left( \| \rho_0-\bar{\rho} \|_{B_{q+2,1}^{\frac{3}{q+2}+\varepsilon'}} + C(q+2) \| \rho \|_{L_{T}^{\infty}L^{\infty}}^{\frac{2+\varepsilon'}{3}} \right) \\
&\le \bar{\rho} + \frac{1}{2} \| \rho \|_{L_{T}^{\infty}L^{\infty}} + C(q+2) \| \rho_0-\bar{\rho} \|_{B_{q+2,1}^{\frac{3}{q+2}+\varepsilon'}} + C(q+2).
\end{align*}
Therefore, we arrive at 
\begin{equation}
    \| \rho \|_{L_{T}^{\infty}L^{\infty}} \le 2\bar{\rho} + C(q+2) \| \rho_0-\bar{\rho} \|_{B_{q+2,1}^{\frac{3}{q+2}+\varepsilon'}} + C(q+2).
\end{equation}
Using the embedding inequalities $B_{2,2}^{3}\hookrightarrow B_{2,1}^{\frac{3}{2}+\varepsilon '}\hookrightarrow B_{q+2,1}^{\frac{3}{q+2}+\varepsilon '},$ we get 
\begin{equation}
    \| \rho \|_{L_{T}^{\infty}L^{\infty}} \le 2\bar{\rho} + C(q+2) \| \rho_0-\bar{\rho} \|_{H^{3}} + C(q+2).
\end{equation}
This completes the proof.
\end{proof}
\begin{remark}
    Indeed, once we establish the estimate for $\|\rho^{1/(q+2)}v\|_{L^{q+2}}$ with $q > 1$, the upper bound of the density follows.
\end{remark}
\section{Lower bound of $\rho$}
In this section, we establish the lower bound for the density. Let $T^*$ denote the maximal existence time of the solution. It follows from standard local well-posedness theory (see Kotschote \cite{Korteweg} for example) that system \eqref{parabolic_system}-\eqref{initial data} admits a unique strong solution $(\rho, v)$ on the maximal interval of existence $[0, T^*)$. If $T^* < \infty$, then the system is wellposed on any interval $[0, T]$ with $0 < T < T^*$. Our goal is to prove that for any $0 < T < T^*$, the bound for $\|\rho^{-1}\|_{L^\infty_T(L^\infty)}$ depends only on $T^*$ and the initial data, and is independent of $T$. \par
With the upper bound of density and the $L^{q+2}$-boundedness for some $q$ with $\gamma \le  \frac{2q+6}{q+2}$ already secured, we turn to the case of general $p$. We shall prove that the bound for $\sup_{0 \le t \le T} \|\rho^{\frac{1}{p+2}} v\|_{L^{p+2}}$ depends explicitly on $(p+2)$.
\begin{lemma}\label{lem:sqrt_p_bound}
    Let $p>2$. There exists a constant $C>0$ depending on $T ^ { * } ,$ $q ,$ $\gamma ,$ $E _ { 0 } ,$ $\left\| \rho _ { 0 } ^ { \frac{1}{q+2}} v _ { 0 } \right\| _ { L ^ { q + 2 } } ,$ $\left\| \rho _ { 0 } \right\| _ { L ^ { \infty } }$, and $\| v _ { 0 } \| _ { L ^ { \infty } }$ but not on $p,$ such that 
    \begin{equation} \sup_{0 \le t \le T}\left\lVert \rho^{\frac{1}{p+2}}v \right\rVert_{L^{p+2}}\leq C\sqrt{ p+2 }.
    \end{equation}
\end{lemma}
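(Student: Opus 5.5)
We repeat the $L^{p+2}$ energy argument of Proposition \ref{PRO3.4} with $q$ replaced by $p$, now using the density upper bound $\|\rho\|_{L^\infty_TL^\infty}\le C$ from the previous proposition. The aim is to reduce everything to a differential inequality for $Y(t):=\int\rho|v|^{p+2}\,dx$ whose constants grow at most polynomially in $p$.

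\textbf{The energy identity.} Multiplying $\eqref{parabolic_system}_2$ by $|v|^pv$ and integrating gives
\[
\frac{1}{p+2}\frac{d}{dt}Y+\int\rho|v|^p|\nabla v|^2dx+p\int\rho|v|^p\big|\nabla|v|\big|^2dx=-\int\nabla\rho^\gamma\cdot|v|^pv\,dx .
\]
We cannot integrate the pressure term by parts against $\rho^\gamma$ directly, because $\rho^\gamma\to\bar\rho^\gamma\neq0$ at infinity. Instead we write $\nabla\rho^\gamma=\nabla(\rho^\gamma-\bar\rho^\gamma)$ and integrate by parts. Using $\rho\le C$, Cauchy--Schwarz and $|\nabla(|v|^pv)|\le (p+1)|v|^p|\nabla v|$, the right-hand side is bounded by
\[
\tfrac12\int\rho|v|^p|\nabla v|^2dx+Cp^2\int\rho^{-1}(\rho^\gamma-\bar\rho^\gamma)^2|v|^p\,dx .
\]
Since $\gamma\ge1$ and $\rho\le C$, we have $\rho^{-1}(\rho^\gamma-\bar\rho^\gamma)^2\le C|\rho-\bar\rho|^2/\rho$.

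\textbf{The main obstacle.} There is no lower bound on $\rho$ yet, so the factor $\rho^{-1}$ is dangerous on $\{\rho\le\bar\rho/2\}$. Splitting $\rho^\gamma=(\rho^\gamma-\bar\rho^\gamma)+\bar\rho^\gamma$ does not help, since the constant part then carries a $\rho^{-1}$ weight. The first thing I would try is to avoid integrating by parts on the low-density region. There I keep the form $\nabla\rho^\gamma=2\gamma\rho^{\gamma-\frac12}\nabla\sqrt\rho$ and bound
\[
\int_{\rho\le 4\bar\rho}\rho^{\gamma-\frac12}|\nabla\sqrt\rho||v|^{p+1}\,dx\le C\|\nabla\sqrt\rho\|_{L^\infty}\dots
\]
That bound is not available, so I would instead use Hölder as in Case 1 of Proposition \ref{PRO3.4}:
\[
\|\nabla\sqrt\rho\|_{L^6}\Big(\int\rho^{\frac65(\gamma-\frac12)}|v|^{\frac65(p+1)}dx\Big)^{5/6}.
\]
Since $\rho^{\frac65(\gamma-\frac12)}\le C\rho^{\frac65\cdot\frac12}\cdot\rho^{\frac65(\gamma-1)}\le C\rho^{3/5}$, we interpolate $\int\rho^{3/5}|v|^{\frac65(p+1)}$ between $\int\rho|v|^{p+2}$ and $\int\rho|v|^2$ (the exponents work for $p>2$), giving a bound $\|\nabla\sqrt\rho\|_{L^6}(1+Y)$ with a $p$-independent constant. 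On $\{\rho>4\bar\rho\}$ the region has finite measure and $\rho\ge4\bar\rho$, so integration by parts with the truncation $(\rho^\gamma-(4\bar\rho)^\gamma)_+$ (as in the Remark after Proposition \ref{PRO3.4}) is harmless. That piece gives $Cp\int_{\Omega_2}|v|^p\le Cp\,(Y+|\Omega_2|)$ via Young's inequality, since $\rho$ is bounded above and below there.

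\textbf{Closing.} Collecting terms gives
\[
Y'\le C(p+2)^2\big(1+\|\nabla\sqrt\rho\|_{L^6}\big)(1+Y).
\]
Naive Gronwall would give $e^{Cp^2}$, which is far too crude, so we need a sharper normalisation. With $Z=Y^{2/(p+2)}=\|\rho^{1/(p+2)}v\|_{L^{p+2}}^2$, the pure terms of the form $Cp\,Y^{p/(p+2)}$ become $Z'\le Cp\,(\cdots)$. Concretely, each Young split is done in the form $Y^{\frac{p}{p+2}}\cdot a\le \tfrac{p}{p+2}Y\,\varepsilon+\dots$, and the terms must be organised so that the $p$-dependence enters as an additive constant, never multiplying $Y$. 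This yields
\[
\frac{d}{dt}\|\rho^{\frac1{p+2}}v\|_{L^{p+2}}^2\le C\big(1+\|\nabla\sqrt\rho\|_{L^6}\big)\|\rho^{\frac1{p+2}}v\|_{L^{p+2}}^2+C(p+2).
\]
Here the $C(p+2)$ comes from the factor $p^2$ combined with the $2/(p+2)$ power. Gronwall, together with $\nabla\sqrt\rho\in L^2_TL^6$ and $\|\rho_0^{\frac1{p+2}}v_0\|_{L^{p+2}}\le C\|v_0\|_{L^\infty}$ (using $\rho_0\le C$ and $\int\rho_0|v_0|^2\le C$), then gives the bound $C(p+2)$ for the squared norm, that is, $C\sqrt{p+2}$ for the norm. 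Arranging the $p$-weights so that they enter only additively is the delicate step.
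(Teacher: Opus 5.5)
Your proposal has a genuine gap, and it starts from an obstacle that is not actually there. For the strong solution $v\in H^2$, so $|v|^pv\in W^{1,1}(\mathbb{R}^3)$ when $p>2$, and hence $\int\bar\rho^\gamma\,\nabla\cdot(|v|^pv)\,dx=0$. The constant part of $\rho^\gamma$ contributes exactly nothing; it does not ``carry a $\rho^{-1}$ weight''. You may therefore write $-\int\nabla\rho^\gamma\cdot|v|^pv\,dx=\int\rho^\gamma\,\nabla\cdot(|v|^pv)\,dx$. Cauchy--Schwarz against the two dissipation terms then leaves $C(p+1)\int\rho^{2\gamma-1}|v|^p\,dx$. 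Since $\gamma\ge1$, \eqref{eq:19} gives $\rho^{2\gamma-1}\le\|\rho\|_{L^\infty}^{2(\gamma-1)}\rho$, so no lower bound on $\rho$ is ever needed.

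Your replacement argument on $\{\rho\le4\bar\rho\}$ fails. To bound $\int\rho^{\frac65(\gamma-\frac12)}|v|^{\frac65(p+1)}dx$ by $\int\rho|v|^2$ and $\int\rho|v|^{p+2}$ via H\"older, the exponents $a,b$ must be nonnegative with $a+b=1$, and this forces the weight to be $\rho^{1}$. With $\rho^{3/5}$ (the case $\gamma$ near $1$) you are left with a factor $\rho^{-2/5}$, which is uncontrolled without a lower bound. You also cannot trade it for $|\Omega_1|$, because $\Omega_1$ has infinite measure. Moreover $a=\frac{4-p}{5p}$, which is negative for $p>4$ (equivalently $\frac65(p+1)>p+2$); this is exactly why Case 1 of Proposition \ref{PRO3.4} needs $q<4$. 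So for the large $p$ the lemma is about, your low-density bound is unavailable.

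The closing step is also unsupported. Your $\Omega_2$ bound $Cp\int_{\Omega_2}|v|^p\le Cp(Y+|\Omega_2|)$ (via Young) and your $\Omega_1$ bound both feed terms linear in $Y$, with $p$-dependent coefficients, into $Y'$. From $Y'\le C(p+2)^2(1+\|\nabla\sqrt\rho\|_{L^6})(1+Y)$, the substitution $Z=Y^{2/(p+2)}$ only gives $Z'\le C(p+2)(1+\|\nabla\sqrt\rho\|_{L^6})\big(Z+Y^{-\frac{p}{p+2}}\big)$, i.e.\ growth like $e^{Cp}$. No rearrangement of Young splits removes a genuinely linear term, so the ``delicate step'' is precisely the missing argument.

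What you need is H\"older instead of Young: $\int\rho|v|^p\,dx\le\|\sqrt\rho v\|_{L^2}^{4/p}Y^{\frac{p-2}{p}}$, with $\|\sqrt\rho v\|_{L^2}\le C$ by \eqref{v energy}. This gives the sublinear inequality $Y'\le C(p+1)^2Y^{\frac{p-2}{p}}$, i.e.\ $\frac{d}{dt}Y^{2/p}\le C(p+1)$. That integrates directly, without Gronwall, to $Y^{2/p}(t)\le Y^{2/p}(0)+C(p+1)T^*$. Since $\|\rho^{\frac1{p+2}}v\|_{L^{p+2}}=(Y^{2/p})^{\frac{p}{2(p+2)}}$ with $\frac{p}{2(p+2)}<\frac12$, and $Y(0)^{2/p}\le\|\sqrt{\rho_0}v_0\|_{L^2}^{4/p}\|v_0\|_{L^\infty}^2$, this yields the bound $C\sqrt{p+2}$. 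This is the paper's proof. Apart from the initial data, it uses only \eqref{eq:19} and \eqref{v energy}, and never $\nabla\sqrt\rho$.
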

\begin{proof}
   Taking the $L^2$ inner product of the momentum equation \eqref{parabolic_system} with $|v|^p v$ in $\mathbb{R}^3$ and integrating by parts yields
   \begin{equation}
       \frac{1}{p+2} \frac{d}{dt}\left\lVert \rho^{\frac{1}{p+2}}v \right\rVert _{L^{p+2}}^{p+2}+ \left\lVert \sqrt{ \rho } \left| v \right| ^{\frac{p}{2}}\left| \nabla v \right| \right\rVert _{L^{2}}^{2}+ p\left\lVert \sqrt{ \rho }\left| v \right|^{\frac{p}{2}}\left| \nabla \left| v \right|  \right| \right\rVert^{2}_{L^{2}}=\left\langle \rho^{\gamma},\nabla \cdot \left(|v|^{p}v\right) \right\rangle _{L^{2}}. 
   \end{equation}
   By applying Young's inequality and utilizing \eqref{v energy} and \eqref{eq:19} we derive
   \begin{equation}
       \begin{aligned}
&\left\langle \rho^{\gamma},\nabla \cdot \left(|v|^{p}v\right) \right\rangle _{L^{2}}\\
=& \left\langle \rho^{\gamma}\left| v \right| ^{p},\nabla \cdot \left(v\right) \right\rangle _{L^{2}}+p\left\langle \rho^{\gamma}\left| v \right| ^{p-1}v,\nabla \left| v \right|  \right\rangle _{L^{2}}\\
\leq& \frac{1}{2}\left\lVert \sqrt{ \rho } \left| v \right|^{\frac{p}{2}}\left| \nabla v \right| \right\rVert^{2}_{L^{2}}+ \frac{p}{2} \left\lVert \sqrt{ \rho } \left| v \right|^{\frac{p}{2}}\left| \nabla \left| v \right|  \right| \right\rVert ^{2}_{L^{2}}+ C(p+1) \int \rho^{2\gamma-1}\left| v \right| ^{p}dx\\
\leq&\frac{1}{2}\left\lVert \sqrt{ \rho } \left| v \right|^{\frac{p}{2}}\left| \nabla v \right| \right\rVert^{2}_{L^{2}}+ \frac{p}{2} \left\lVert \sqrt{ \rho } \left| v \right|^{\frac{p}{2}}\left| \nabla \left| v \right|  \right| \right\rVert ^{2}_{L^{2}}+ C(p+1) \left\lVert \rho \right\rVert _{L^{\infty}}^{2(\gamma-1)}\left\lVert \sqrt{ \rho } v\right\rVert_{L^{2}} ^{\frac{4}{p}}\left\lVert \rho^{\frac{1}{p+2}}v \right\rVert _{L^{p+2}}^{\frac{(p-2)(p+2)}{p}}\\
\leq&\frac{1}{2}\left\lVert \sqrt{ \rho } \left| v \right|^{\frac{p}{2}}\left| \nabla v \right| \right\rVert^{2}_{L^{2}}+ \frac{p}{2} \left\lVert \sqrt{ \rho } \left| v \right|^{\frac{p}{2}}\left| \nabla \left| v \right|  \right| \right\rVert ^{2}_{L^{2}}+ C(p+1) \left\lVert \rho^{\frac{1}{p+2}}v \right\rVert _{L^{p+2}}^{\frac{(p-2)(p+2)}{p}},
\end{aligned}
   \end{equation}which implies 
   \begin{equation}
       \frac{d}{dt}\left\lVert \rho^{\frac{1}{p+2}}v \right\rVert _{L^{p+2}}^{p+2}\leq C(p+1)^{2} \left( \left\lVert \rho^{\frac{1}{p+2}}v \right\rVert _{L^{p+2}}^{(p+2)} \right)^{\frac{p-2}{p}}.
   \end{equation}
   Solving this inequality leads to 
\begin{equation}
    \begin{aligned}
\sup_{0 \le t \le T}\left\lVert \rho^{\frac{1}{p+2}}v \right\rVert _{L^{p+2}}&\leq \left( \left\lVert \rho_{0}^{\frac{1}{p+2}}v_{0} \right\rVert^{\frac{2(p+2)}{p}}_{L^{p+2}}  +C(p+1)T^{*}\right) ^{\frac{p}{2(p+2)}}\\
&\leq \left\lVert \sqrt{ \rho_{0} } v_{0}\right\rVert ^{\frac{2}{p+2}}_{L^2}\left\lVert v_{0} \right\rVert ^{\frac{p}{p+2}}_{L^{\infty}}+C\sqrt{ p+1 }\leq C\sqrt{ p+2 }.
\end{aligned}
\end{equation}
\end{proof}
In fact, by interpolation, we can extend the validity of the result to a wider range of $p$.
\begin{corollary}
        Let $p>2$. There exists a constant $C>0$ depending on $T ^ { * } ,$ $q ,$ $\gamma ,$ $E _ { 0 } ,$ $\left\| \rho _ { 0 } ^ { \frac{1}{q+2}} v _ { 0 } \right\| _ { L ^ { q + 2 } } ,$ $\left\| \rho _ { 0 } \right\| _ { L ^ { \infty } }$, and $\| v _ { 0 } \| _ { L ^ { \infty } }$ but not on $p,$ such that 
    \begin{equation} \sup_{0 \le t \le T}\left\lVert \rho^{\frac{1}{p}}v \right\rVert_{L^{p}}\leq C\sqrt{ p }.
    \end{equation}
\end{corollary}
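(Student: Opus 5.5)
The corollary is a reindexing of Lemma \ref{lem:sqrt_p_bound}; the only real work is the range of exponents that reindexing misses. For a given exponent $p$, I set $p' := p-2$. Whenever $p' > 2$, i.e.\ $p > 4$, Lemma \ref{lem:sqrt_p_bound} applied with $p'$ in place of $p$ gives
\[
\sup_{0\le t\le T}\|\rho^{\frac1p}v\|_{L^p} = \sup_{0\le t\le T}\|\rho^{\frac{1}{p'+2}}v\|_{L^{p'+2}} \le C\sqrt{p'+2} = C\sqrt p ,
\]
with $C$ independent of $p$. This settles every $p>4$.

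For the remaining range $2<p\le 4$, I interpolate between two known bounds. The endpoint $p=2$ is the energy bound $\sup_t\|\rho^{1/2}v\|_{L^2}\le C$ from Proposition \ref{prop:2.2}. For the other endpoint I take a fixed $p_1>4$, say $p_1=5$, and use $\sup_t\|\rho^{1/p_1}v\|_{L^{p_1}}\le C\sqrt{5}$ from the first step. The natural object is the weighted measure $d\mu=\rho\,dx$, since
\[
\|\rho^{1/p}v\|_{L^p}=\|v\|_{L^p(d\mu)} .
\]
Hölder's inequality in $L^p(d\mu)$ then gives
\[
\|v\|_{L^p(d\mu)}\le \|v\|_{L^2(d\mu)}^{\alpha}\,\|v\|_{L^{5}(d\mu)}^{1-\alpha},\qquad \frac1p=\frac{\alpha}{2}+\frac{1-\alpha}{5}.
\]
The right-hand side is bounded by a constant independent of $p$ on this compact range, and a constant is at most $C\sqrt p$ because $\sqrt p\ge\sqrt2$. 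Taking the larger of the two constants yields the statement for all $p>2$.

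I expect no step to be a genuine obstacle. The only point needing care is that the interpolation must be done in the weighted space $L^p(\rho\,dx)$, so that the density weights $\rho^{1/p}$ match exactly at every exponent. This is also what lets me avoid invoking the density upper bound here. The constants depend only on the quantities listed in Lemma \ref{lem:sqrt_p_bound}, so the dependence claimed in the corollary is preserved.
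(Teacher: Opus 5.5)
Your argument is correct. For $p>4$ it is the paper's argument, a reindexing of Lemma \ref{lem:sqrt_p_bound}. For $2<p\le 4$ you take a different and shorter route.

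The paper uses only the Cauchy--Schwarz splitting
\[
\int\rho|v|^p\,dx\le\|\rho^{1/2}v\|_{L^2}\Bigl(\int\rho|v|^{2p-2}\,dx\Bigr)^{1/2}.
\]
When $p$ is near $2$, the exponent $2p-2$ is also near $2$, so a single application cannot reach the range $p>4$ covered by the lemma. The paper therefore runs an induction along $x_0=4$, $x_{k+1}=x_k/2+1\downarrow 2$. It must also check that one and the same constant survives infinitely many steps. That is why it imposes $C\ge\max\{2^4,\sup_t\|\rho^{1/2}v\|_{L^2}^2\}$ and verifies $C^{1-\frac{1}{2p}}(2p)^{1/2}\le C\sqrt p$.

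You instead use the Lyapunov interpolation inequality in $L^p(\rho\,dx)$ with the fixed endpoints $2$ and $5$. This gives the $p$-independent bound $\max\{\sup_t\|\rho^{1/2}v\|_{L^2},\sqrt5\,C\}$ in one application of Hölder's inequality, with no induction and no constant bookkeeping. The inequality needs no finiteness of the measure, so the fact that $\rho\,dx$ has infinite mass (because $\rho\to\bar\rho>0$) costs nothing.

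The paper's route gains nothing over yours beyond relying solely on Cauchy--Schwarz. Yours is the cleaner argument and yields the same dependence of the constant on the data.
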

\begin{proof}
    For the case $p=2$, it follows from \eqref{v energy} that 
    \begin{equation}
        \sup_{0 \le t \le T}\left\| \rho^{\frac{1}{2}} v \right\|_{L^{2}} \leq C.
    \end{equation}
    For $p>4$, by Lemma \ref{lem:sqrt_p_bound}, we have \begin{equation}
        \sup_{0 \le t \le T}\left\lVert \rho^{\frac{1}{p}} v\right\rVert _{L^{p}}\leq C\sqrt{ p }.
    \end{equation} 
    Let $$ x_{0}=4, \quad x_{k+1}= \frac{x_{k}}{2}+1, \quad \text{ for } k\geq 0. $$
    We proceed by induction. Assume that for all $\delta \in (x_{k}, \infty)$, there holds 
    \begin{equation} \sup_{0 \le t \le T}\left\lVert \rho^{\frac{1}{\delta}}v \right\rVert _{L^{\delta}}\leq C\sqrt{ \delta },
    \end{equation}
    where $C \ge \max\{2^4,\underset{0\le t\le T^*}{\mathrm{sup}}\|\rho^{\frac{1}{2}}v\|^2_{L^2}\}$ is independent of $\delta$. Then, for any $p \in (x_{k+1}, x_{k}]$, applying the Cauchy-Schwarz inequality yields 
    \begin{equation}
        \begin{aligned}
 \int \rho |v|^{p} dx &= \int \left( \rho^{\frac{1}{2}} |v| \right) \left( \rho^{\frac{1}{2}} |v|^{p-1} \right) dx\leq  \left\| \rho^{\frac{1}{2}} v \right\|_{L^{2}} \left( \int \rho |v|^{2p-2} dx \right)^{\frac{1}{2}}.
\end{aligned}
    \end{equation}
    Note that $$ 2p - 2 > 2\left(\frac{x_k}{2} + 1\right) - 2 = x_k. $$ Thus, by the induction hypothesis, $$\left\lVert \rho^{\frac{1}{p}}v \right\rVert _{L^{p}}^{p}\leq \sqrt{C}C^{p-1}(2p-2)^{\frac{p-1}{2}}.$$
    Without loss of generality, assuming $C \geq 1$, we deduce 
    \begin{equation}
        \left\lVert \rho^{\frac{1}{p}}v \right\rVert _{L^{p}}\leq C^{1-\frac{1}{2p}}.(2p)^{\frac{1}{2}}\leq C\sqrt{p} ,
    \end{equation}
    as long as $C \geq 2^p$ $(p \in(2,4])$.
     Since $x_k  = 2 + \frac{1}{2^{k-1}} \to 2$ as $k \to \infty$, we have $(2,4]= \bigcup_{k=0}^{\infty} (x_{k+1}, x_k]$. Consequently, the conclusion holds for all $p>2$.
\end{proof}
\begin{lemma}\label{lem:reverse_holder}
Let $\Psi(p)=\int^{T}_{0}\int_{\mathbb{R}^{3}}\rho \left| v \right| ^{p}dxdt,$ for $p>0$, there exists a constant $C_{3}\geq 1$ depending on $T^{*}$, $q$, $\gamma$, $E_{0}$, $\left\lVert \rho_{0}^{\frac{1}{q+2}}v_{0} \right\rVert_{L^{q+2}}$, and $\left\lVert \rho_{0} \right\rVert_{L^{\infty}}$ such that
\begin{equation}
    \Psi(r (p+2))\leq C_{3}V_{T}\left( (p+2)^{2r }\Psi(p+2)^{r }+(p+2)^{2r }+\left( \left\lVert \rho_{0}^{\frac{1}{2}}v_{0} \right\rVert _{L^{2}}+\left\lVert v_{0} \right\rVert _{L^{\infty}} \right) ^{r (p+2)} \right),
\end{equation}
where $r = \frac{5}{3}$ and $V_{T}=\left\lVert \rho^{-1} \right\rVert _{L_{T}^{\infty}(L^{\infty}(\mathbb{R}^3))}+e^{r ^{2}}$.
\end{lemma}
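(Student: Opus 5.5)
We prove the lemma by a reverse-H\"older/Moser step for the weighted norms of $v$. The inputs are the $L^{p+2}$ energy identity used in Lemma \ref{lem:sqrt_p_bound}, the Sobolev embedding in $\mathbb{R}^3$, and the factor $\|\rho^{-1}\|_{L^\infty}$ needed to pass from the degenerate weight $\rho$ to an unweighted gradient. Set $w=|v|^{\frac{p+2}{2}}$.

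\textbf{Step 1: time-integrated energy bound.} Return to the identity in the proof of Lemma \ref{lem:sqrt_p_bound}, but keep the dissipation and integrate in time instead of using Gronwall. This gives
\begin{equation*}
\sup_{0\le t\le T}\int\rho|v|^{p+2}dx+(p+2)\int_0^T\!\!\int\rho|v|^p|\nabla v|^2\,dx\,dt\le \int\rho_0|v_0|^{p+2}dx+C(p+2)^2\int_0^T\!\!\int\rho^{2\gamma-1}|v|^p\,dx\,dt .
\end{equation*}
Since $\rho$ is bounded above by \eqref{eq:19}, we have $\rho^{2\gamma-1}|v|^p\le C\rho|v|^p\le C\rho(1+|v|^{p+2})$, using $\rho^{2\gamma-2}\le C$. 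The term $\int_0^T\!\int\rho\,dx\,dt$ is not finite in the whole space, because of the far field. So I will instead use H\"older: $\int\rho|v|^p\le(\int\rho|v|^2)^{2/p}(\int\rho|v|^{p+2})^{(p-2)/p}$, together with \eqref{v energy}. By Young this is bounded by $C(1+\int\rho|v|^{p+2})$. Integrating in $t\le T^*$ then gives
\begin{equation*}
\sup_t\int\rho|v|^{p+2}+\frac{1}{p+2}\int_0^T\!\!\int\rho|\nabla w|^2\le C(p+2)^2\bigl(\Psi(p+2)+1\bigr)+\int\rho_0|v_0|^{p+2}.
\end{equation*}
The initial term is controlled by $\|\rho_0^{1/2}v_0\|_{L^2}^2\|v_0\|_{L^\infty}^{p}\le(\|\rho_0^{1/2}v_0\|_{L^2}+\|v_0\|_{L^\infty})^{p+2}$, up to a constant.

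\textbf{Step 2: parabolic interpolation.} Write $\rho|v|^{r(p+2)}=\rho\,w^{2r}$ with $r=5/3$, and use the standard parabolic embedding in 3D:
\begin{equation*}
\int w^{10/3}\le\Bigl(\int w^2\Bigr)^{2/3}\|w\|_{L^6}^2\le C\Bigl(\int w^2\Bigr)^{2/3}\int|\nabla w|^2 .
\end{equation*}
Insert the weight as follows. Bound $\rho w^{10/3}\le\|\rho\|_{L^\infty}w^{10/3}$. Then use $\int|\nabla w|^2\le\|\rho^{-1}\|_{L^\infty}\int\rho|\nabla w|^2$ and $\int w^2\le\|\rho^{-1}\|_{L^\infty}\int\rho w^2$. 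This produces a factor $V_T^{5/3}$, which is too large. To get a single power of $V_T$, place the $\rho^{-1}$ factor only on the gradient term, and handle $(\int w^2)^{2/3}$ by splitting it as $(\int\rho w^2)^{\alpha}$ times a power of $\|\rho^{-1}\|$. Alternatively, apply Sobolev to $\sqrt{\rho}\,w$: $\nabla(\sqrt\rho w)=\sqrt\rho\nabla w+w\nabla\sqrt\rho$, and the term $w\nabla\sqrt\rho$ is handled with $\nabla\sqrt\rho\in L^2L^6$ and the $L^\infty$ bound on $\rho$. I will try the $\sqrt{\rho}\,w$ route first, since it keeps the weight throughout and introduces $\rho^{-1}$ essentially once. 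The precise exponent is not critical as long as it is absorbed into $C_3V_T$. This is possible because $V_T\ge e^{r^2}\ge1$, so any fixed power can be written as $V_T$ times a constant once the constants are organized accordingly.

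\textbf{Step 3: assembling the bound.} Integrate the interpolation in time and insert the Step 1 bound. This gives
\begin{equation*}
\Psi(r(p+2))\le C V_T\Bigl(\sup_t\int\rho w^2\Bigr)^{2/3}\int_0^T\!\!\int\rho|\nabla w|^2\le C V_T\,(p+2)\bigl[(p+2)^2(\Psi(p+2)+1)+A^{p+2}\bigr]^{5/3},
\end{equation*}
where $A:=\|\rho_0^{1/2}v_0\|_{L^2}+\|v_0\|_{L^\infty}$. Expanding the bracket, the powers of $(p+2)$ come out at most as $(p+2)^{2r}$ after combining with the factor from $|\nabla w|^2=\frac{(p+2)^2}{4}|v|^p|\nabla|v||^2$. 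The bookkeeping of these $(p+2)$ factors is where I expect the main obstacle. A cleaner way may be to use the sharper $\sqrt{p+2}$ growth from Lemma \ref{lem:sqrt_p_bound} when estimating the $\sup_t$ factor. The term $A^{r(p+2)}$ appears directly from the initial datum.
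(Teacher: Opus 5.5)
There is a genuine gap in Step 2, which is the core of the lemma; your Step 1 follows the paper's time-integrated $L^{p+2}$ estimate.

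\textbf{The lost weight.} The problem starts with your first move, replacing $\rho w^{10/3}$ by $\|\rho\|_{L^\infty}w^{10/3}$. Once the weight is discarded, $\int w^2$ is unweighted. No later splitting can turn it back into $\int\rho w^2$ without paying $\|\rho^{-1}\|_{L^\infty}^{2/3}$, so you end with $V_T^{5/3}$.

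Your claim that any fixed power of $V_T$ can be absorbed into $C_3V_T$ because $V_T\ge 1$ is false. $V_T\ge1$ gives $V_T\le V_T^{5/3}$, not the reverse. Moreover, $C_3$ may not depend on $V_T$, which is exactly the unbounded quantity being tracked.

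\textbf{The $\sqrt\rho\,w$ route.} This does not close either. First, $\rho w^{10/3}=\rho^{-2/3}(\sqrt\rho\,w)^{10/3}$ already costs $\|\rho^{-1}\|_{L^\infty}^{2/3}$. Second, the cross term satisfies $\int w^2|\nabla\sqrt\rho|^2\le\|w\|_{L^3}^2\|\nabla\sqrt\rho\|_{L^6}^2$. Handling it leads to one of two dead ends:
\begin{itemize}
\item[(a)] you need $\sup_t\|w\|_{L^3}^2$, an unweighted higher moment not controlled by Step 1; or
\item[(b)] after absorbing $\|\sqrt\rho\,w\|_{L^6}$, you pay further powers of $\|\rho^{-1}\|_{L^\infty}$ and are left with $\int_0^T\|\nabla\sqrt\rho\|_{L^6}^4\,dt$, whereas only $\nabla\sqrt\rho\in L^2_TL^6$ is known.
\end{itemize}

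\textbf{The missing idea.} Apply H\"older to the weighted integrand itself, with exponents $3/2$ and $3$, keeping the weight on the low-power factor:
\[
\int\rho w^{10/3}dx=\int(\rho w^{4/3})\,w^2\,dx\le\Bigl(\int\rho^{3/2}w^2dx\Bigr)^{2/3}\|w\|_{L^6}^2\le\|\rho\|_{L^\infty}^{1/3}\Bigl(\int\rho w^2dx\Bigr)^{2/3}\|w\|_{L^6}^2 .
\]
Then use $\|w\|_{L^6}^2\le C\|\nabla w\|_{L^2}^2\le C\|\rho^{-1}\|_{L^\infty}\int\rho|\nabla w|^2dx$. Only the gradient factor is unweighted, and that is where the single power of $V_T$ comes from; this is the paper's argument. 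A fixed larger power $V_T^m$ would still be harmless in the subsequent Moser iteration, since $V_T^{mr^{-l}}$ stays bounded once $r^l\ge\log V_T$. It does not, however, prove the lemma as stated.

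\textbf{Bookkeeping in Step 1.} You also lose a factor of $p+2$ by keeping the wrong dissipation term. The term $(p+2)\int\rho|v|^p|\nabla v|^2$ controls only $\frac{4}{p+2}\int\rho|\nabla w|^2$. As a result your Step 3 display carries $(p+2)^{2r+1}$, which contradicts your remark that the powers come out as $(p+2)^{2r}$.

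Use instead $p(p+2)\int\rho|v|^p|\nabla|v||^2=\frac{4p}{p+2}\int\rho|\nabla w|^2$, halved after absorbing the pressure term. Its coefficient is bounded below uniformly for $p\ge2$. This gives
\[
\sup_t\int\rho w^2\,dx+\int_0^T\!\!\int\rho|\nabla w|^2\,dx\,dt\le C(p+2)^2\bigl(1+\Psi(p+2)\bigr)+\int\rho_0|v_0|^{p+2}dx .
\]
Raising this to the power $r$ produces exactly the three terms in the statement.
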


\begin{proof}
By H\"{o}lder's inequality, Sobolev embedding and \eqref{eq:19}, we have
\begin{equation}\label{the initial estimate for Psi}
\begin{aligned}
    \Psi(r(p+2)) &= \int^{T}_{0}\int_{\mathbb{R}^{3}}\rho|v|^{\frac{5}{3}(p+2)}\,dx\,dt \\
    &\leq \int^{T}_{0}\left( \int \rho^{\frac{3}{2}}|v|^{p+2}\,dx \right)^{\frac{2}{3}} \left( \int|v|^{3(p+2)}\,dx \right)^{\frac{1}{3}}\,dt \\
    &\leq \|\rho\|_{L_{T}^{\infty}L^{\infty}}^{\frac{1}{3}} \sup_{0 \le t \le T}\left( \int \rho|v|^{p+2}\,dx \right)^{\frac{2}{3}} \int^{T}_{0} \left\| |v|^{\frac{p+2}{2}} \right\|_{L^{6}}^{2}\,dt \\
    &\leq C \sup_{0 \le t \le T}\left( \int \rho|v|^{p+2}\,dx\right)^{\frac{2}{3}} \|\rho^{-1}\|_{L_{T}^{\infty}L^{\infty}} \\
    &\quad \times \left( \int^{T}_{0}\int \rho \left| \nabla|v|^{\frac{p+2}{2}} \right|^{2}\,dx\,dt + \int^{T}_{0}\int \rho|v|^{p+2}\,dx\,dt \right) \\
    &\le CV_{T} \left( \sup_{0 \le t \le T}\int \rho|v|^{p+2}\,dx + \int^{T}_{0}\int \rho \left| \nabla|v|^{\frac{p+2}{2}} \right|^{2}\,dx\,dt \right)^{r}.
\end{aligned}
\end{equation}
To estimate the right hand side, multiplying $\eqref{parabolic_system}_2$ by $|v|^{p}v$ and integrating by parts yields
\begin{equation}
    \frac{1}{p+2} \frac{d}{dt}\int \rho|v|^{p+2}dx+p\int \rho|v|^{p}\left| \nabla \left|v \right|  \right| ^{2}dx+\int \rho|v|^{p}|\nabla v|^{2}dx=\int P(\rho)\nabla\cdot\left(\left| v \right| ^{p}v\right)dx.
\end{equation}
By Young's inequality and \eqref{eq:19}, we have
\begin{equation}
    \begin{aligned}
 \int P(\rho)\nabla \cdot\left(\left| v \right| ^{p}v\right)dx
=& \int \rho^{\gamma}(\left| v \right| ^{p}\nabla \cdot v+ v\cdot { \nabla \left( \left| v \right| ^{p} \right) } )dx\\
\leq& \int\rho^{\gamma}\left| v \right| ^{p} \left| \nabla v \right|dx + p\int \rho^{\gamma}|v|^{p}\left| \nabla \left| v \right|  \right|dx \\
\leq & \frac{1}{2} \int \rho|v|^{p}|\nabla v|^{2}dx+ \frac{p}{2}\int \rho|v|^{p}|\nabla \left| v \right| |^{2} dx+ C(p+1)\int \rho^{2\gamma-1}|v|^{p}dx\\
 \leq&  \frac{1}{2}\int \rho |v|^{p}\left| \nabla v \right| ^{2}dx+ \frac{p}{2}\int \rho|v|^{p}|\nabla \left| v \right| |^{2}dx + C(p+1)\left\lVert \rho \right\rVert _{L^{\infty}}^{2(\gamma-1)}\int \rho|v|^{p}dx\\ 
 \leq&  \frac{1}{2}\int \rho |v|^{p}\left| \nabla v \right| ^{2}dx+ \frac{p}{2}\int \rho|v|^{p}|\nabla \left| v \right| |^{2}dx\\
 &+ C(p+1)\sup_{0 \le t \le T}\left( \int \rho|v|^{2}dx \right)^{\frac{2}{p}}\left( \int \rho|v|^{p+2} dx\right)^{1- \frac{2}{p}}.
\end{aligned}
\end{equation}
Combining \eqref{v energy}, we have
\begin{equation}
    \frac{1}{p+2}\frac{d}{dt}\int \rho |v|^{p+2}dx + \frac{1}{2}\int \rho |v|^p |\nabla v|^2dx + \frac{p}{2}\int \rho |v|^p |\nabla |v||^2dx \leq C(p+1)\left( \int \rho |v|^{p+2} dx \right)^{1-\frac{2}{p}}.
\end{equation}
Integrating over $t\in[0,T]$, we obtain
\begin{equation}
    \begin{aligned}
&\sup_{0 \le t \le T}\frac{1}{p+2}\int{\rho}|v|^{p+2}dx+ \frac{1}{2}\int_{0}^{T}\int \rho |v|^p |\nabla v|^2dxdt + \frac{p}{2}\int_{0}^{T}\int \rho |v|^p |\nabla |v||^2dxdt \\
\leq &C(p+1)\int^{T}_{0}\left( \int \rho |v|^{p+2} dx \right)^{1-\frac{2}{p}}dt+\frac{2}{p+2}\int \rho_{0}|v_{0}|^{p+2}\\
\leq& C(p+1){T^{*}}^{\frac{2}{p}}\left( \int^{T}_{0}\int \rho|v|^{p+2}dxdt \right)^{1- \frac{2}{p}}+ \frac{2}{p+2}\int \rho_{0}|v_{0}|^{p+2}.
\end{aligned}
\end{equation}
Substituting this into the RHS of \eqref{the initial estimate for Psi}, we get
\begin{equation}
    \begin{aligned}
\Psi(r(p+2))&\leq C{V}_{T} \left( C(p+2)^2 \left( \int^{T}_{0}\int  \rho |v|^{p+2} dxdt \right)^{\frac{p-2}{p}}  + C \int \rho_0 |v_0|^{p+2} dx \right) ^{r }\\
&\leq C{V}_{T} (p+2)^{2r } \left( \int^{T}_{0}\int  \rho |v|^{p+2} dxdt \right)^{r } + C{V}_{T} (p+2)^{2r } + C{V}_{T} \left( \int \rho_0 |v_0|^{p+2} dx \right)^{r }.
\end{aligned}
\end{equation}
Denoting the constant in the RHS as $C_{3}\geq1$, and noting that
\begin{equation}
    \left\| \rho_0^{\frac{1}{p+2}} v_0 \right\|_{L^{p+2}} \leq  \left\| \rho_0^{\frac{1}{2}} v_0 \right\|_{L^2}^{\frac{p}{p+2}} \|v_0\|_{L^\infty}^{\frac{2}{p+2}} \leq \left\| \rho_0^{\frac{1}{2}} v_0 \right\|_{L^2} + \|v_0\|_{L^\infty},
\end{equation}
the proof is complete.
\end{proof}
The following proposition plays a pivotal role in deriving the positive lower bound for the density, as it establishes a crucial link between the density and the effective velocity.
\begin{proposition}
There exists a constant $c_{v}\geq 1$ depending on $T^{*}$, $q$, $\gamma$, $E_{0}$, $\left\lVert \rho_{0}^{\frac{1}{q+2}}v_{0} \right\rVert_{L^{q+2}}$, and $\left\lVert \rho_{0} \right\rVert_{L^{\infty}}$ such that
\begin{equation}\label{eq:v_L_infty_est}
    \left\lVert v \right\rVert _{L_{T}^{\infty}\left(L^{\infty}\left(\mathbb{R}^3\right)\right)}\leq c_{v}(\log V_{T} )^{\frac{1}{2}}.
\end{equation}
\end{proposition}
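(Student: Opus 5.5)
The plan is to run a Moser iteration based on the reverse Hölder inequality of Lemma \ref{lem:reverse_holder}. Then we pass to the limit in $p$ to get $\|v\|_{L^\infty_T L^\infty}$. The gain over a naive bound comes from stopping the iteration at a level $p$ tied to $\log V_T$ and using the $\sqrt p$ growth from the Corollary following Lemma \ref{lem:sqrt_p_bound}. Set $p_k+2 = r^k(p_0+2)$ with $r=\frac53$, and $M_k := \Psi(p_k+2)^{1/(p_k+2)}$. The $L^\infty$ norm is recovered from the sequence $\Psi(p_k+2)^{1/(p_k+2)}$, because $\rho$ is bounded above and below on $[0,T]$ for each fixed $T<T^*$. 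More precisely, $\|v\|_{L^\infty_TL^\infty} = \lim_k \|v\|_{L^{p_k+2}_{t,x}}$, and the weight $\rho^{1/(p_k+2)}\to 1$ in the limit.

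\textbf{Step 1 (one step of the iteration).} Lemma \ref{lem:reverse_holder} gives
\[
\Psi(p_{k+1}+2)\le 3C_3V_T\max\Big\{(p_k+2)^{2r}\Psi(p_k+2)^r,\ (p_k+2)^{2r},\ D_0^{r(p_k+2)}\Big\},
\]
where $D_0=\|\rho_0^{1/2}v_0\|_{L^2}+\|v_0\|_{L^\infty}$. Taking the power $1/(p_{k+1}+2)=1/(r(p_k+2))$ yields
\[
\max\{M_{k+1},1,D_0\}\le (3C_3V_T)^{\frac{1}{r^{k+1}(p_0+2)}}\,(p_k+2)^{\frac{2}{p_k+2}}\max\{M_k,1,D_0\}.
\]
Iterating from $k=0$ gives
\[
\|v\|_{L^\infty}\le (3C_3V_T)^{\frac{1}{p_0+2}\sum_{k\ge1}r^{-k}}\prod_k (p_k+2)^{\frac{2}{p_k+2}}\max\{M_0,1,D_0\}.
\]
Since $\sum_{k\ge1} r^{-k}=\frac{3}{2}$, the first factor is $(3C_3V_T)^{\frac{3}{2(p_0+2)}}$. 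The product is bounded by $(p_0+2)^{C/(p_0+2)}$ times an absolute constant, uniformly in $p_0\ge1$.

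\textbf{Step 2 (initial level).} The Corollary after Lemma \ref{lem:sqrt_p_bound} gives $\sup_t\|\rho^{1/p}v\|_{L^p}\le C\sqrt p$, hence
\[
M_0=\Psi(p_0+2)^{1/(p_0+2)}\le (T^*)^{1/(p_0+2)}\,C\sqrt{p_0+2}.
\]
Altogether we obtain
\[
\|v\|_{L^\infty_TL^\infty}\le C\,V_T^{\frac{3}{2(p_0+2)}}\sqrt{p_0+2},
\]
with $C$ independent of $p_0$ and $T$.

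\textbf{Step 3 (optimize).} Choose $p_0+2=\log V_T$. This is admissible, giving $p_0>2$, because $V_T\ge e^{r^2}=e^{25/9}$ guarantees $\log V_T\ge 25/9$; it is precisely to make this choice legal that the constant $e^{r^2}$ is built into $V_T$. With this choice $V_T^{3/(2\log V_T)}=e^{3/2}$, so
\[
\|v\|_{L^\infty_TL^\infty}\le c_v(\log V_T)^{1/2}.
\]
The dependence of $c_v$ on the data is that of $C_3$ and of the Corollary constants.

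The main obstacle is the bookkeeping in Step 1: the $V_T$ factor must enter only through the convergent geometric sum of its exponents. This works because $V_T$ appears linearly in Lemma \ref{lem:reverse_holder} and is not raised to the power $r$. The polynomial factors $(p_k+2)^{2r}$ must also give a convergent product; this holds because $\sum_k \log(p_k+2)/(p_k+2)<\infty$ for geometric $p_k$. A second point of care is justifying $\lim_k M_k = \|v\|_{L^\infty}$, which uses $\rho^{-1}\in L^\infty$ for each fixed $T<T^*$. A third is absorbing the $D_0$ term, which uses $\|v_0\|_{L^\infty}$ finite, into the constant.
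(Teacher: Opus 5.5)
Your argument is correct and is essentially the paper's proof. It runs the same Moser iteration on $\Psi$ driven by Lemma \ref{lem:reverse_holder}, with the same $\max$-truncation against the data term (the paper's $\tilde{\Psi}(k)$) and the $\sqrt{p}$ bound at the starting level. Like the paper, you start at an exponent of order $\log V_T$ so that $V_T^{O(1/\log V_T)}$ is a constant; the paper takes $r^l$ with $l=\lfloor \log_r\log V_T+1\rfloor$, and you take $\log V_T$ itself.

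One slip needs fixing. The bound $\log V_T\ge 25/9$ only gives $p_0\ge 7/9$, not $p_0>2$. The proof of Lemma \ref{lem:reverse_holder} really needs $p\ge 2$, because it interpolates $\int\rho|v|^p$ between $\int\rho|v|^2$ and $\int\rho|v|^{p+2}$. The paper builds $e^{r^2}$ into $V_T$ precisely to force $l\ge 3$, i.e. $r^l>4$, so that this range is respected. You should therefore start at, for example, $p_0+2=r\log V_T\ge 125/27$, which only changes $c_v$.
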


\begin{proof}
Set $p+2=r^{k}$ in Lemma \ref{lem:reverse_holder}. We have
\begin{equation}
    \Psi(r ^{k+1})\leq C_{3}V_{T}(r ^{2r  k}\Psi(r ^{k})^{r }+r ^{2kr }+C_{4}^{r ^{k+1}}),
\end{equation}
where $C_{4}= \left\lVert \rho_{0}^{\frac{1}{2}}v_{0} \right\rVert _{L^{2}}+\left\lVert v_{0} \right\rVert _{L^{\infty}} +1$. Let
\begin{equation}
    \tilde{\Psi} (k) = \max\left\{ \Psi(r^{k}) ,C_{4}^{r ^{k}}\right\}.
\end{equation}
Then
\begin{equation}
    \begin{aligned}
\tilde{\Psi}(k+1)&= \max\left\{ \Psi(r ^{k+1}) ,C_{4}^{r ^{k+1}}\right\} \\
&\leq\max\left\{  C_{3}V_{T}(r ^{2r  k}\Psi(r ^{k})^{r }+r ^{2kr }+C_{4}^{r ^{k+1}}), C_{4}^{r ^{k+1}}\right\} \\
&\leq 3C_{3}V_{T}r ^{2r  k}\tilde{\Psi}(k)^{r }.
\end{aligned}
\end{equation}
Let $l\geq 3$ be the starting point of the iteration, to be determined later. Iterating the above inequality yields that for all $k>l$,
\begin{equation}
    \begin{aligned}
\tilde{\Psi}(k+1)^{\frac{1}{r ^{k+1}}}&\leq(3C_{3}V_{T})^{\frac{1}{r ^{k+1}}}(r ^{2k})^{\frac{1}{r ^{k}}}\tilde{\Psi}(k)^{\frac{1}{r ^{k}}}\leq \ldots\\
&\leq (3C_{3}V_{T})^{\sum_{j=l+1}^{k+1}\frac{1}{r ^{j}}}\times r ^{2\sum^{k+1}_{j=l+1} \frac{j-1}{r ^{j-1}}}\times \tilde{\Psi}(l)^{\frac{1}{r ^{l}}}\\
&\leq (3C_{3}V_{T})^{\sum_{j=l+1}^{\infty}r ^{-j}}\times r ^{2\sum^{\infty}_{j=l+1}\frac{j-1}{r ^{j-1}}}\times \tilde{\Psi}(l)^{\frac{1}{r ^{l}}}\\
&= C_{r }  V_{T}^{\frac{r ^{-l}}{r -1}}\times {\tilde{\Psi}(l)}^{\frac{1}{r ^{l}}},
\end{aligned}
\end{equation}
where $C_{r }=(3C_{3})^{\sum^{\infty}_{j=1}r ^{-j}} r ^{2\sum^{\infty}_{j=1}\frac{j-1}{r ^{j-1}}}.$ For $r^{l}>4$, by Lemma \ref{lem:sqrt_p_bound},
\begin{equation}
    \Psi(r ^{l})^{\frac{1}{r ^{l}}}=\left(\int^{T}_{0}\int\rho|v|^{r ^{l}}dxdt\right)^{\frac{1}{r ^{l}}}\leq CT^{\frac{1}{r ^{l}}} r ^\frac{l}{2}.
\end{equation}
Hence,
\begin{equation}
    \tilde{\Psi}(k+1)^{\frac{1}{r ^{k+1}}}\leq C_{r }  V_{T}^{\frac{r ^{-l}}{r -1}}\times\max\left\{ \Psi(r ^{l})^{\frac{1}{r ^{l}}},C_{4} \right\} \leq CC_{r }C_{4}(1+T^{*}) V_{T}^{\frac{r ^{-l}}{r -1}}r ^{\frac{l}{2}}.
\end{equation}
Let
\begin{equation}
    l=\lfloor \log_{r }(\log V_{T}) +1\rfloor.
\end{equation}
Here, $\lfloor \cdot \rfloor$ represents the floor function.
Then
\begin{equation}
    \begin{aligned}
\left( \int^{T}_{0}\int|v|^{r ^{k+1}}dxdt\right)^{\frac{1}{r ^{k+1}}}&\leq V_{T}^{\frac{1}{r ^{k+1}}} \left( \int^{T}_{0}\int \rho|v|^{r ^{k+1}}dxdt\right)^{\frac{1}{r ^{k+1}}}\\
&\leq V_{T}^{\frac{1}{r ^{k+1}}} \tilde{\Psi}(k+1)^{\frac{1}{r ^{k+1}}}\\
&\leq  V_{T}^{\frac{1}{r ^{k+1}}}C C_{r }C_{4}(1+T^{*})r ^{\frac{1}{2}}e^{\frac{1}{r -1}}(\log  V_{T})^{\frac{1}{2}}.
\end{aligned}
\end{equation}
Letting $k\to \infty$, we obtain
\begin{equation}
    \left\lVert v \right\rVert _{{L_{T}^{\infty}\left(L^{\infty}\left(\mathbb{R}^3\right)\right)}}\leq CC_{r }C_{4}(1+T^{*})r ^{\frac{1}{2}}e^{\frac{1}{r -1}}(\log V_{T})^{\frac{1}{2}}.
\end{equation}
\end{proof}
We now apply the De Giorgi iteration technique to derive the density lower bound.
\begin{proposition}\label{prop:density_lower_bound}
There exists a positive constant $C$ depending on $T^*$, $\bar{\rho}$, $q$, $\gamma$, $E_0$, $\|\rho_0^{\frac{1}{q+2}} v_0\|_{L^{q+2}}$, as well as the $L^\infty$ norms of the initial data $\|\rho_0\|_{L^\infty}$, $\|v_0\|_{L^\infty}$, and $\|\rho^{-1}_0\|_{L^\infty}$, such that for any $0<T<T^*$
\begin{equation}\label{eq:35}
    \left\| {\rho}^{-1} \right\|_{L^\infty_T(L^\infty(\mathbb{R}^3))} \le C.
\end{equation}
\end{proposition}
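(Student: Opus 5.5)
Throughout, set $w:=\rho^{-1}$. The plan is a De Giorgi iteration on the level sets of $w$, with constants that grow only like a power of $\log V_T$. This will give a self-improving inequality $V_T\le C+C(\log V_T)^{\alpha}$, which forces $V_T\le C$.

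\textbf{Step 1: equation for $w$.} From $\eqref{parabolic_system}_1$, a direct computation gives
\begin{equation*}
\partial_t w-\Delta w+v\cdot\nabla w-w\,\nabla\cdot v=-\frac{2|\nabla w|^2}{w}\le 0 .
\end{equation*}
So $w$ is a subsolution of a linear drift--reaction equation whose only coefficient is $v$. By \eqref{eq:v_L_infty_est}, $\|v\|_{L^\infty_T L^\infty}\le c_v(\log V_T)^{1/2}$.

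\textbf{Step 2: truncated energy inequality.} Let $k\ge k_0:=2\|\rho_0^{-1}\|_{L^\infty}$ and $w_{(k)}:=(w-k)_+$.
\begin{itemize}
\item Since $k_0\ge 2/\bar\rho$, which exceeds the far-field value of $w$, the function $w_{(k)}$ vanishes near infinity. We also have $w_{(k)}(\cdot,0)\equiv0$.
\item The set $A_k(t)=\{w>k\}$ lies in $\{\rho<\bar\rho/2\}$. By Proposition \ref{lem:H1_bound}, $|A_k(t)|\le C\|\rho-\bar\rho\|_{L^2}^2\le C$.
\end{itemize}
Test the equation with $w_{(k)}$, integrate the term $w\nabla\cdot v$ by parts, and use $\nabla w=\nabla w_{(k)}$ on $A_k$. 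This gives
\begin{equation*}
\frac12\frac{d}{dt}\|w_{(k)}\|_{L^2}^2+\|\nabla w_{(k)}\|_{L^2}^2\le -\int v\cdot\nabla w_{(k)}\,(3w_{(k)}+k)\,dx .
\end{equation*}
By Young's inequality and Step 1, the right side is at most
\begin{equation*}
\tfrac12\|\nabla w_{(k)}\|_{L^2}^2+C\log V_T\big(\|w_{(k)}\|_{L^2}^2+k^2|A_k(t)|\big).
\end{equation*}
Integrating in time, for $U_n^T$ as in the introduction and $k_n=M(1-2^{-n})+k_0$, we get
\begin{equation*}
U_{n+1}^T\le C\log V_T\Big(\|w_{(k_{n+1})}\|_{L^2_{t,x}}^2+k_{n+1}^2\int_0^T|A_{k_{n+1}}(t)|\,dt\Big).
\end{equation*}

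\textbf{Step 3: nonlinear recursion.} We use the parabolic embedding $L^\infty_TL^2\cap L^2_T\dot H^1\hookrightarrow L^{10/3}_{t,x}$, so that $\|w_{(k_n)}\|_{L^{10/3}_{t,x}}^{10/3}\le C(U_n^T)^{5/3}$.
\begin{itemize}
\item Chebyshev between the levels $k_n<k_{n+1}$ gives
\begin{equation*}
\iint \mathbf 1_{A_{k_{n+1}}}\le \Big(\frac{2^{n+1}}{M}\Big)^{10/3}C(U_n^T)^{5/3}.
\end{equation*}
\item Hölder then gives
\begin{equation*}
\|w_{(k_{n+1})}\|_{L^2_{t,x}}^2\le C(U_n^T)\Big(\iint\mathbf 1_{A_{k_{n+1}}}\Big)^{2/5}.
\end{equation*}
\end{itemize}
Combining, we obtain $U_{n+1}^T\le K A^n (U_n^T)^{1+\nu}$ with $\nu=2/3$, $A=2^{10/3}$, and, for $M\ge k_0$ and $M$ large,
\begin{equation*}
K\simeq C\log V_T\,(M+k_0)^2M^{-10/3}\lesssim C\log V_T\,M^{-4/3}.
\end{equation*}
The initial energy $U_0^T$ is controlled by Step 2 with $k=k_0$ and Gronwall on $[0,T^*]$, using $|A_{k_0}|\le C$. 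This gives $U_0^T\le Ck_0^2\log V_T\,e^{C T^*\log V_T}$.

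\textbf{Step 4: closing, and the main obstacle.} By Lemma \ref{lem:moser_iteration}, if
\begin{equation*}
U_0^T\le K^{-3/2}A^{-9/4}\sim M^2(\log V_T)^{-3/2},
\end{equation*}
then $U_n^T\to0$, hence $w\le k_0+M$ on $[0,T]$. The Gronwall factor $e^{CT^*\log V_T}=V_T^{CT^*}$ is the main obstacle: it is a power of $V_T$, not of $\log V_T$, and would destroy the closing argument.
\begin{itemize}
\item To avoid this, run the iteration on short time intervals $[0,\tau]$, $[\tau,2\tau]$, \dots, as the introduction suggests. On each interval, use the truncation levels shifted by the bound already obtained at the start of that interval, so the initial truncated energy vanishes. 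The remaining $U_0$ is then bounded without any exponential growth, by at most $C k_0^2\log V_T$ (up to the uniform measure bound).
\item The step length $\tau$ must be uniform, depending only on $T^*$ and the data. If instead one insists on a single interval, choose $\tau\sim(\log V_T)^{-1}$; this gives $\log V_T$-many steps, each adding at most $M\sim C(\log V_T)^{5/4}$ to the bound.
\end{itemize}
Either way, $\|w\|_{L^\infty_TL^\infty}\le C+C(\log V_T)^{\alpha}$ for some finite $\alpha$. Since $V_T=\|w\|_{L^\infty_TL^\infty}+e^{r^2}$, this yields $V_T\le C(1+(\log V_T)^{\alpha})$, so $V_T$ is bounded by a constant independent of $T<T^*$. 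This is \eqref{eq:35}. The delicate point I expect is making the step count times the per-step increment remain sublinear in a power of $\log V_T$; the first attempt is to take $\tau$ independent of $V_T$ and absorb the factor $e^{C\tau\log V_T}$ by choosing $C\tau$ small.
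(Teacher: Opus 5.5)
Your Steps 1--3 are the paper's argument: the same equation for $w=\rho^{-1}$, the same truncated energy inequality, the same levels $k_n$, the same $L^{10/3}$ recursion with $K\sim \log V_T\,M^{-4/3}$, and the same Gronwall bound for $U_0^T$. You also correctly identify the factor $V_T^{CT^*}$ as the obstacle.

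The gap is in Step 4. Shifting the levels so that the truncated datum vanishes at $t=j\tau$ does not remove the exponential. Gronwall applied to $\frac{d}{dt}\|w_{(k)}\|_{L^2}^2\le C\|v\|_{L^\infty}^2(\|w_{(k)}\|_{L^2}^2+k^2|A_k(t)|)$ still produces $\exp(C\int_{j\tau}^{(j+1)\tau}\|v\|_{L^\infty}^2\,dt)\le V_T^{Cc_v^2\tau}$. Moreover, the source carries the square of the shifted level $k_0'=B_j$, the bound from the previous interval, not $k_0^2$. So the natural increment on each interval is $M\simeq B_j\,V_T^{Cc_v^2\tau/2}(\log V_T)^{5/4}\tau^{1/2}$. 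This is multiplicative in $B_j$, not an additive $(\log V_T)^{5/4}$.

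With $\tau\sim(\log V_T)^{-1}$ and about $T^*\log V_T$ steps, this gives $B_N\gtrsim B_0(1+c(\log V_T)^{3/4})^{cT^*\log V_T}$, which grows faster than any power of $V_T$ and does not close. With $\tau$ fixed, $V_T^{C\tau}$ is never bounded by a power of $\log V_T$. So the inequality $V_T\le C(1+(\log V_T)^{\alpha})$ follows from neither route.

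What is missing is that you do not need a polylogarithmic bound. Any power of $V_T$ strictly less than $1$ suffices, because $V_T=\|\rho^{-1}\|_{L^\infty_TL^\infty}+e^{r^2}$ is finite for $T<T^*$ and can be absorbed into the left side.

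The paper fixes the step $\tau=\min\{T^*/2,\,1/(2c_v^2)\}$, which depends only on $T^*$ and the data, so the Gronwall factor is at most $V^{1/2}$. Then $M\lesssim B_j V^{1/4}(\log V)^{5/4}\le CB_jV^{1/2}\le \frac12 V+CB_j^2$. Absorbing $\frac12 V$ gives $\|\rho^{-1}\|_{L^\infty(0,(j+1)\tau;L^\infty)}\le B_{j+1}\le C(1+B_j^2)$. The number of steps $\lceil T^*/\tau\rceil$ is independent of $T$ and of $V$, so the iterated squaring of $B_j$ is harmless and yields a constant depending only on $T^*$ and the data.

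Your closing remark (take $\tau$ independent of $V_T$ and make $C\tau$ small) points in this direction. But each step has to close by Young's inequality and absorption of a sublinear power of $V$, not by a $\log$-power bound.
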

\begin{proof}
Our argument relies on the De Giorgi iterative scheme. While initially designed for elliptic problems, we utilize the version adapted for parabolic equations by Ladyzenskaja et al. \cite{LSU1968}. ${\rho}^{-1}$ satisfy the following equation
    \begin{equation}\label{1}
    \partial_t (\rho^{-1}) - \Delta (\rho^{-1}) + \frac{2}{\rho^{-1}} \big| \nabla (\rho^{-1}) \big|^2 + v \cdot \nabla (\rho^{-1}) - \rho^{-1} \nabla \cdot v = 0.
\end{equation}
We set $\rho^{-1}_{(k)}=\max\{\rho^{-1}-k,0\}$. Multiplying \eqref{1} by $\rho^{-1}_{(k)}$ and integrating by parts, we arrive at 
\begin{equation}
    \frac{1}{2} \frac{d}{dt} \|\rho^{-1}_{(k)}\|_{L^2}^2 + \|\nabla \rho^{-1}_{(k)}\|_{L^2}^2 \le -3 \int \left( v \cdot \nabla \rho^{-1}_{(k)} \right) \rho^{-1}_{(k)} \, dx - k \int v \cdot \nabla \rho^{-1}_{(k)} \, dx.
\end{equation}
Using Young's inequality, we get
\begin{equation}\label{2}
\frac{d}{dt} \|\rho^{-1}_{(k)}\|_{L^2}^2 + \|\nabla \rho^{-1}_{(k)}\|_{L^2}^2 \le C \|v\|_{L^\infty}^2 \left( \|\rho^{-1}_{(k)}\|_{L^2}^2 + k^2 |\{\rho^{-1}(t) > k\}| \right).
\end{equation}
    Here 
$$
|\{\rho ^{-1}(t)>k\}|:=\mathcal{L} \left( \left\{ x\in \mathbb{R} ^3| \rho ^{-1}\left( t,x \right) >k \right\} \right) .
$$
In what follows, we shall use the following notations
$$
\begin{aligned}
	k_n&:=M\left( 1-2^{-n} \right)+2\left\| {\rho}^{-1} _0 \right\| _{L^{\infty}} ,\\
    A_n(t)&:=|\{x||\rho ^{-1}\left( x,t \right) >k_n\}|,
    \\
	\mu ^T(k_n)&:=\int_0^T{|\{\rho ^{-1}\left( t \right) >k_n\}|dt}=|\{\left( x,t \right),0\le t \le T |\rho ^{-1}\left( x,t \right) >k_n\}|,\\
	U_{n}^{T}&:=\parallel \rho _{(k_{n})}^{-1}\parallel _{L_{T}^{\infty}L^2}^2+\parallel \nabla \rho _{(k_{n})}^{-1}\parallel _{L_{T}^{2}L^2}^2,n\ge 0.\\
\end{aligned}
$$
where $M\geq 2\left\| {\rho}^{-1} _0 \right\| _{L^{\infty}}$ will be defined as follows.
Since from \eqref{2} as $k=k_{n+1}$, we get 
\begin{equation}\label{2.4}
\begin{aligned}
\|\rho^{-1}_{(k_{n+1})}(T)\|_{L^2}^2 + \int_0^T \|\nabla \rho^{-1}_{(k_{n+1})}(\tau)\|_{L^2}^2 d\tau \le & \; \|\rho^{-1}_{(k_{n+1})}(0)\|_{L^2}^2 \\
& + C \int_0^T \|v(\tau)\|_{L^\infty}^2 \|\rho^{-1}_{(k_{n+1})}(\tau)\|_{L^2}^2 d\tau \\
& + C k_{n+1}^2 \int_0^T \|v(\tau)\|_{L^\infty}^2 \left| \left\{ \rho^{-1}(\tau) > k_{n+1} \right\} \right| d\tau.
\end{aligned}
\end{equation}
Using Tchebytchev inequality, we have 
\begin{equation}
\begin{split}
    \mu ^T(k_{n+1})=\left| \left\{ \left( x,t \right) ,0\le t\le T|\rho ^{-1}>k_{n+1} \right\} \right|&\le \frac{1}{(k_{n+1}-k_n)^q}\int_0^T{\int_{A_n(t)}{\left| \rho _{(k_n)}^{-1} \right|^q}}\,dx\,dt
\\
&=\frac{2^{q(n+1)}}{M^q}\int_0^T{\int_{A_n(t)}{\left| \rho _{(k_n)}^{-1} \right|^q}}\,dx\,dt.
\end{split}
\end{equation}
Let $q=10/3$, we derive 
\begin{equation}
\begin{split}
\frac{2^{\frac{10}{3}(n+1)}}{M^{\frac{10}{3}}} \int_0^T \int_{\Omega} \left| \rho _{(k_n)}^{-1} \right|^{\frac{10}{3}} \, dx \, dt &\le \frac{2^{\frac{10}{3}(n+1)}}{M^{\frac{10}{3}}} \cdot C \left( \sup_{0 \le t \le T} \| \rho^{-1}_{(k_n)} \|_{L^2}^2 \right)^{\frac{2}{3}} \left( \int_0^T \| \nabla \rho^{-1}_{(k_n)} \|_{L^2}^2 dt \right).
\\
&\le C \frac{2^{\frac{10}{3}(n+1)}}{M^{\frac{10}{3}}} \left( U_n^T \right)^{\frac{5}{3}}.
\end{split}
\end{equation}
Consequently, we establish the following estimate
\begin{equation}\label{4.8}
\begin{aligned}
C k_{n+1}^2 \int_0^T \|v(\tau)\|_{L^\infty}^2 \left| \left\{ \rho^{-1}(\tau) > k_{n+1} \right\} \right| d\tau \le C\|v\|_{L^\infty_{T,x}}^2 \cdot M^{-\frac{4}{3}} \cdot 2^{\frac{10}{3}(n+1)} \left( U_n^T \right)^{\frac{5}{3}}.
\end{aligned}
\end{equation}
Here we need $k_{n+1} \le 2M$, this is guaranteed by the assumption $M \ge 2\|\rho_0^{-1}\|_{L^\infty}$.
On the other hand, using the relation involving the level set difference $k_{n+1} - k_n$, we can estimate the spatial integral as follows
$$\begin{aligned}
\int_{A_{n+1}(t)} \left(\rho^{-1}_{(k_{n+1})}\right)^2 dx &\le \int_{A_{n+1}(t)} \left(\rho^{-1}_{(k_n)}\right)^2 dx \\
&\le \int_{A_{n+1}(t)} \left(\rho^{-1}_{(k_n)}\right)^2 \cdot \left( \frac{\rho^{-1}_{(k_n)}}{k_{n+1} - k_n} \right)^{\frac{4}{3}} dx \\
&= \frac{1}{(k_{n+1} - k_n)^{\frac{4}{3}}} \int_{A_{n+1}(t)} \left(\rho^{-1}_{(k_n)}\right)^{\frac{10}{3}} dx \\
&= \frac{1}{\left( M \cdot 2^{-(n+1)} \right)^{\frac{4}{3}}} \int_{A_{n+1}(t)} \left(\rho^{-1}_{(k_n)}\right)^{\frac{10}{3}} dx \\
&= \frac{2^{\frac{4}{3}(n+1)}}{M^{\frac{4}{3}}} \int_{A_{n+1}(t)} \left(\rho^{-1}_{(k_n)}\right)^{\frac{10}{3}} dx,
\end{aligned}$$
Thus, we deduce that
\begin{equation}\label{4.9}
    \begin{aligned}
C \int_0^T \|v(\tau)\|_{L^\infty}^2 \| \rho^{-1}_{(k_{n+1})}(\tau) \|_{L^2}^2 d\tau
&\le C \|v\|_{L^\infty_{T,x}}^2 \int_0^T \int \left( \frac{2^{\frac{4}{3}(n+1)}}{M^{\frac{4}{3}}} \left| \rho^{-1}_{(k_n)} \right|^{\frac{10}{3}} \right) dx d\tau \\
&= C \|v\|_{L^\infty_{T,x}}^2 \frac{2^{\frac{4}{3}(n+1)}}{M^{\frac{4}{3}}} \int_0^T \int \left| \rho^{-1}_{(k_n)} \right|^{\frac{10}{3}} dx d\tau
\\
&\le C \|v\|_{L^\infty_{T,x}}^2 \frac{2^{\frac{4}{3}(n+1)}}{M^{\frac{4}{3}}} \left( U_n^T \right)^{\frac{5}{3}}.
\end{aligned}
\end{equation}
Therefore, combining \eqref{2.4}, \eqref{4.8} and \eqref{4.9} and using the fact that $\|\rho^{-1}_{(k_{n+1})}(0)\|_{L^2}^2=0$, we arrive at 
\begin{equation}
     U_{n+1}^T \le C \|v\|_{L^\infty_{T,x}}^2 \frac{2^{\frac{10}{3}(n+1)}}{M^{\frac{4}{3}}} \left( U_n^T \right)^{\frac{5}{3}}.
\end{equation}
Using Lemma \ref{lem:moser_iteration}. Let $K=C2^{\frac{10}{3}}\|v\|_{L^\infty_{T,x}}^2{M^{-\frac{4}{3}}}$, $A=2^{\frac{10}{3}}$ and $\nu=\frac{2}{3}$. To derive $U_{n+1}^T \to  0$, we need 
\begin{equation}
    U_0^T \le K^{-\frac{1}{\nu}} A^{-\frac{1}{\nu^2}},
\end{equation}
i.e.
\begin{equation}\label{2.10}
    C \cdot \|v\|_{L^\infty}^3 \cdot U_0^T\le  M^2 .
\end{equation}
Let us now turn our attention to the bound for $U_0^T$. We note that
\begin{equation}
\begin{aligned}
    & \frac{d}{dt} \left\| \rho^{-1}_{(2\| {\rho}^{-1}_0 \|_{L^\infty})} \right\|_{L^2}^2 + \left\| \nabla \rho^{-1}_{(2\| {\rho}^{-1}_0 \|_{L^\infty})} \right\|_{L^2}^2 \\
    &\le C \|v\|_{L^\infty}^2 \left( \left\| \rho^{-1}_{(2\| {\rho}^{-1}_0 \|_{L^\infty})} \right\|_{L^2}^2 + 4\left\| {\rho}^{-1}_0 \right\|_{L^\infty}^2 \left| \left\{ \rho^{-1}(t) > 2\left\| {\rho^{-1}_0} \right\|_{L^\infty} \right\} \right| \right).
\end{aligned}
\end{equation}
Using Gronwall's inequality, we derive
\begin{equation}
\begin{split}
& \underset{0\le t\le T}{\mathrm{sup}}\|\rho^{-1}_{(2\| {\rho}^{-1}_0 \|_{L^\infty})}(t)\|_{L^2}^2 + \int_0^T \|\nabla \rho^{-1}_{(2\| {\rho}^{-1}_0 \|_{L^\infty})}(\tau)\|_{L^2}^2 d\tau \\
&\le C \left\| {\rho}^{-1}_0 \right\|_{L^\infty}^2 \exp\left( C \int_0^T \|v(\tau)\|_{L^\infty}^2 d\tau \right) \int_0^T \|v(\tau)\|_{L^\infty}^2 \left| \left\{ \rho^{-1}(t) > 2\left\| {\rho}^{-1}_0 \right\|_{L^\infty} \right\} \right| d\tau \\
&=CV_T^{c_v^2T} c_v^2\log V_{T} \left\| {\rho}^{-1}_0 \right\|_{L^\infty}^2 \mu^T(k_0).
\end{split}
\end{equation}
We now estimate the term $\mu^T(k_0)$ by using Proposition \ref{lem:H1_bound} as follows
\begin{equation}
\begin{aligned}
    \mu ^T(k_0) &= \left| \left\{ \left( x,t \right) \mid \rho ^{-1}(x,t) > 2\left\| {\rho}^{-1}_0 \right\|_{L^{\infty}} \right\} \right| \\
    &\le T \sup_{0\le t\le T} \int_{\left\{ x \mid \rho ^{-1}(t) > 2\left\| {\rho}^{-1}_0 \right\|_{L^{\infty}} \right\}} \frac{\left( \sqrt{\rho}-\sqrt{\bar{\rho}} \right) ^2}{\left( \sqrt{\bar{\rho}}-\sqrt{\left( 2\left\| {\rho}^{-1}_0 \right\|_{L^{\infty}} \right) ^{-1}} \right) ^2} dx \\
    &\le T \sup_{0\le t\le T} \int_{\left\{ x \mid \rho ^{-1}(t) > 2\left\| {\rho}^{-1}_0 \right\|_{L^{\infty}} \right\}} \frac{\left( \sqrt{\rho}-\sqrt{\bar{\rho}} \right) ^2}{\left( \sqrt{\bar{\rho}}/4 \right) ^2} dx \\
    &\le \frac{16T}{\bar{\rho}} \sup_{0\le t\le T} \int_{\left\{ x \mid \rho ^{-1}(t) > 2\left\| {\rho}^{-1}_0 \right\|_{L^{\infty}} \right\}} \left( \sqrt{\rho}-\sqrt{\bar{\rho}} \right) ^2 dx \\
    &\le \frac{16CT}{\bar{\rho}}.
\end{aligned}
\end{equation}
For \eqref{2.10} to hold, it suffices to
\begin{equation}
     C\cdot V_{T}^{c_v^2T}\left(c_v^2 \log V_T \right) ^{5/2}\left\| {\rho}^{-1} _0 \right\| _{L^{\infty}}^2\frac{16T}{\bar{\rho}} \le M^2,
\end{equation}
here $C$ is a constant independent of $T$. Let $M=\sqrt{ C\cdot V_{T}^{c_v^2T}\left(c_v^2 \log V_T \right) ^{5/2}\left\| {\rho}^{-1} _0 \right\| _{L^{\infty}}^2\frac{16T}{\bar{\rho}}}+2\left\| {\rho}^{-1} _0 \right\| _{L^{\infty}}$($M\geq 2\left\| {\rho}^{-1} _0 \right\| _{L^{\infty}}$), we arrive at $U_{n+1}^T \to  0$, i.e.
\begin{equation}
    \|\rho^{-1}_{(M + 2\left\lVert \rho_0^{-1} \right\rVert _{L^\infty})}(T)\|_{L^2}^2 + \int_0^T \|\nabla \rho^{-1}_{(M + 2\left\lVert \rho_0^{-1} \right\rVert _{L^\infty})}(\tau)\|_{L^2}^2 d\tau \le0.
\end{equation}
Consequently, We get 
\begin{equation}
    \left\| {\rho}^{-1} \right\| _{L_{T}^{\infty}L^{\infty}}\le M+2\left\| {\rho}^{-1} _0 \right\| _{L^{\infty}}=\sqrt{ C\cdot V_{T}^{c_v^2T}\left(c_v^2 \log V_T \right) ^{5/2}\left\| {\rho}^{-1}_0 \right\| _{L^{\infty}}^2\frac{16T}{\bar{\rho}}}+4\left\| {\rho}^{-1}_0 \right\| _{L^{\infty}}.
\end{equation}
We now proceed to the first step of the time extension. Let $T=\min\{\frac{T^*}{2},\frac{1}{2c_v^2}\}$,
\begin{equation}
\begin{aligned}
\left\| {\rho}^{-1} \right\|_{L_{T}^{\infty}L^{\infty}} &\le M+2\left\| {\rho}^{-1} _0 \right\| _{L^{\infty}} = \sqrt{ C\cdot V_{T}^{c_v^2 T}\left(c_v^2 \log V_T \right) ^{5/2}\left\| {\rho}^{-1} _0 \right\| _{L^{\infty}}^2\frac{16T}{\bar{\rho}}}+4\left\| {\rho}^{-1}_0 \right\| _{L^{\infty}} \\
&\le \sqrt{ C\cdot V_{T}^{\frac{1}{2}}\left( c_v^2 \log V_T \right) ^{\frac{5}{2}} \left\| {\rho}^{-1}_0 \right\| _{L^{\infty}}^2 \frac{8T^*}{c_v^2 \bar{\rho}}}+4\left\| {\rho}^{-1}_0 \right\| _{L^{\infty}} \\
&\le C(T^*,\bar{\rho})V_{T}^{\frac{1}{2}}\left\| {\rho}^{-1}_0 \right\| _{L^{\infty}}+4\left\| {\rho}^{-1} _0 \right\| _{L^{\infty}} \\
&\le \frac{1}{2}V_T + C(T^*,\bar{\rho})\left\| {\rho}^{-1}_0 \right\| _{L^{\infty}}^{2} +4\left\| {\rho}^{-1} _0 \right\| _{L^{\infty}}\\
&\le \frac{e^{\frac{25}{9}}}{2} + C(T^*,\bar{\rho})\left\| {\rho}^{-1}_0 \right\| _{L^{\infty}}^{2} + \frac{1}{2}\left\| {\rho}^{-1} \right\| _{L_{T}^{\infty}L^{\infty}}+4\left\| {\rho}^{-1} _0 \right\| _{L^{\infty}}.
\end{aligned}
\end{equation}
So we arrive at
\begin{equation}
    \left\| {\rho}^{-1} \right\| _{L_{T}^{\infty}L^{\infty}}\le e^{\frac{25}{9}}+C(T^*,\bar{\rho})\left\| {\rho}^{-1}_0 \right\| _{L^{\infty}}^{2}+8\left\| {\rho}^{-1}_0 \right\| _{L^{\infty}}.
\end{equation}
This concludes the first step of the time extension. We now proceed to the second step. We denote
$$
\begin{aligned}
	k'_n&:=M'\left( 1-2^{-n} \right)+e^{\frac{25}{9}}+C(T^*,\bar{\rho})\left\| {\rho}^{-1}_0 \right\| _{L^{\infty}}^{2}+8\left\| {\rho}^{-1} _0 \right\| _{L^{\infty}} ,\\
	\mu ^{2T}(k’_n)&:=\int_T^{2T}{|\{\rho ^{-1}\left( t \right) >k'_n\}|dt}=|\{\left( x,t \right),T\le t \le 2T |\rho ^{-1}\left( x,t \right) >k'_n\}|,\\
	U_{n}^{2T}&:=\parallel \rho _{(k_{n}')}^{-1}\parallel _{L_{[T,2T]}^{\infty}L^2}^2+\parallel \nabla \rho _{(k_{n}')}^{-1}\parallel _{L_{[T,2T]}^{2}L^2}^2,n\ge 0,\\
\end{aligned}
$$
where $M' \geq 2(e^{\frac{25}{9}}+C(T^*,\bar{\rho})\left\| {\rho}^{-1}_0 \right\| _{L^{\infty}}^{2}+8\left\| {\rho}^{-1} _0 \right\| _{L^{\infty}})$ will be defined as well.  Using the same method, we will get the iterative inequality
\begin{equation}
     U_{n+1}^{2T} \le C \|v\|_{L^\infty_{[T,2T],x}}^2 \frac{2^{\frac{10}{3}(n+1)}}{M'^{\frac{4}{3}}} \left( U_n^{2T} \right)^{\frac{5}{3}}.
\end{equation}
Since we have
\begin{equation}
\begin{aligned}
    & \frac{d}{dt} \left\| \rho^{-1}_{(k'_0)} \right\|_{L^2}^2 + \left\| \nabla \rho^{-1}_{(k'_0)} \right\|_{L^2}^2 \\
    &\le C \|v\|_{L^\infty}^2 \left( \left\| \rho^{-1}_{(k'_0)} \right\|_{L^2}^2 + {k'_0}^2 \left| \left\{ \rho^{-1}(t) >k'_0  \right\} \right| \right).
\end{aligned}
\end{equation}
Using Gronwall’s inequality and select $t=T$ as the initial data, we arrive at
    \begin{equation}
\begin{split}
& \underset{T\le t\le 2T}{\mathrm{sup}}\|\rho^{-1}_{(k'_0)}(t)\|_{L^2}^2 + \int_T^{2T} \|\nabla \rho^{-1}_{(k'_0)}(\tau)\|_{L^2}^2 d\tau \\
&\le C {k'_0}^2 \exp\left( C \int_T^{2T} \|v(s)\|_{L^\infty}^2 ds \right) \int_T^{2T} \|v(\tau)\|_{L^\infty}^2 \left| \left\{ \rho^{-1}(t) > k'_0 \right\} \right| d\tau \\
&= CV_{2T}^{c_v^2T} c_v^2\log V_{2T} {k'_0}^2 \mu^{2T}(k'_0).
\end{split}
\end{equation}
Similarly, to derive $U^{2T}_{n+1} \to 0$, we need
\begin{equation}
    C \cdot \|v\|_{L^\infty}^3 \cdot U_0^{2T}\le  M'^2 .
\end{equation}
Let 
\begin{equation}
    \begin{split}
        M'=&\sqrt{C\cdot V_{2T}^{c_{v}^{2}T}\left( c_{v}^{2}\log V_{2T} \right) ^{5/2}\left( e^{\frac{25}{9}}+C(T^*,\bar{\rho})\left\| {\rho}^{-1} _0 \right\| _{L^{\infty}}^{2}+8\left\| {\rho}^{-1} _0 \right\| _{L^{\infty}} \right)^{2} \frac{16T}{\bar{\rho}}}
        \\
        &+2(e^{\frac{25}{9}}+C(T^*,\bar{\rho})\left\| {\rho}^{-1}_0 \right\| _{L^{\infty}}^{2}+8\left\| {\rho}^{-1} _0 \right\| _{L^{\infty}}),
    \end{split}
\end{equation}
here $C$ is still independent of $T$. Consequently, we arrive at
\begin{equation}
\begin{split}
 \left\| {\rho}^{-1} \right\| _{L_{2T}^{\infty}L^{\infty}}\le& M'+e^{\frac{25}{9}}+C(T^*,\bar{\rho})\left\| {\rho}^{-1}_0 \right\| _{L^{\infty}}^{2}+8\left\| {\rho}^{-1} _0 \right\| _{L^{\infty}}
 \\
 =&\sqrt{C\cdot V_{2T}^{c_{v}^{2}T}\left( c_{v}^{2}\log V_{2T} \right) ^{5/2}\left( e^{\frac{25}{9}}+C(T^*,\bar{\rho})\left\| {\rho}^{-1} _0 \right\| _{L^{\infty}}^{2}+8\left\| {\rho}^{-1} _0 \right\| _{L^{\infty}} \right)^{2} \frac{16T}{\bar{\rho}}}
        \\
        &+3(e^{\frac{25}{9}}+C(T^*,\bar{\rho})\left\| {\rho}^{-1}_0 \right\| _{L^{\infty}}^{2}+8\left\| {\rho}^{-1} _0 \right\| _{L^{\infty}}).
\end{split}
\end{equation}
Again let $T=\min\{\frac{T^*}{2},\frac{1}{2c_v^2}\}$, we get the boundness of $\underset{0\le t\le 2\min\{\frac{T^*}{2},\frac{1}{2c_v^2}\}}{\mathrm{sup}}\left\| {\rho}^{-1} \right\| _{L^{\infty}}$. By mathematical induction, we can extend the solution with a uniform time step until we proceed beyond $T^*$. This completes the proof.
\end{proof}
\section{High-order estimates}
Having secured the positive $L^\infty-$bounds for both $\rho$ and $\rho^{-1},$ we are now in a position to investigate the higher-order regularity of the solution.
\begin{proposition}\label{pro5.1}
    For any $0<T<T^*$, there exists a constant $C > 0$ depending on $T ^ { * } ,$ $\gamma ,$ $\| v _ { 0 } \| _ { L ^ { \infty } } ,$ $\| \rho _ { 0 }^{-1} \| _ { L ^ { \infty } } ,$ $\| \rho _ { 0 }-\bar{\rho} \| _ { H ^ { 2 } }$ and $\| v _ { 0 } \| _ { H ^ { 1 } }$ such that,
    \begin{equation}\label{prop:ho_est_1}
        \sup_{0 \le t \le T}\left( \left\lVert \partial_{t}\rho \right\rVert _{L^{2}} +\left\lVert \rho-\bar{\rho} \right\rVert _{H^{2}}+\left\lVert v \right\rVert _{H^{1}}\right)+\int^{T}_{0}\left( \left\lVert \partial_{t}\rho \right\rVert^{2}_{H^{1}}+\left\lVert \rho-\bar{\rho} \right\rVert^{2}_{H^{3}}+\left\lVert \partial_{t}v \right\rVert ^{2}_{L^{2}}+\left\lVert v \right\rVert ^{2}_{H^{2}}   \right) dt\leq C. 
    \end{equation}
\end{proposition}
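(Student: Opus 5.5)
With the two-sided bounds $C^{-1}\le\rho\le C$ on $[0,T]$ from \eqref{eq:19} and Proposition \ref{prop:density_lower_bound}, I would treat \eqref{parabolic_system} as a uniformly parabolic system with bounded coefficients. The plan is to run standard energy estimates, first for $v$ at the $H^1$ level and then for $\rho-\bar\rho$ at the $H^2$ level. Throughout I will use that $\|v\|_{L^\infty_TL^\infty}\le c_v(\log V_T)^{1/2}$ is now bounded, by \eqref{eq:v_L_infty_est}, since $V_T$ is bounded. I also use that $u=v-\nabla\log\rho$.

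\textbf{Step 1: $H^1$ estimate for $v$.} I multiply $\eqref{parabolic_system}_2$ by $\partial_t v$ and integrate:
\begin{equation*}
\int\rho|\partial_t v|^2dx+\frac12\frac{d}{dt}\int\rho|\nabla v|^2dx=\frac12\int\partial_t\rho\,|\nabla v|^2dx-\int\rho u\cdot\nabla v\cdot\partial_t v\,dx-\int\nabla P\cdot\partial_t v\,dx .
\end{equation*}
The term $\int\rho (v\cdot\nabla v)\cdot\partial_tv$ is absorbed using $\|v\|_{L^\infty}$. The remaining part of the convection term is $\int\rho\nabla\log\rho\cdot\nabla v\,\partial_tv$. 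This term, together with $\nabla P$ and $\partial_t\rho|\nabla v|^2$, requires $\nabla\rho$ in $L^4$ or $L^\infty$ and $\partial_t\rho=\Delta\rho-\nabla\cdot(\rho v)$.

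For this I pair Step 1 with Step 2 below. I write $\eqref{parabolic_system}_2$ as the elliptic equation
\begin{equation*}
-\nabla\cdot(\rho\nabla v)=-\rho\partial_tv-\rho u\cdot\nabla v-\nabla P .
\end{equation*}
Since $\rho$ is bounded above and below and $\nabla\rho$ is controlled, this gives
\begin{equation*}
\|\nabla^2v\|_{L^2}\le C\big(\|\partial_tv\|_{L^2}+\|\nabla\rho\|_{L^4}\|\nabla v\|_{L^4}+\|\nabla v\|_{L^2}+\|\nabla\rho\|_{L^2}\big).
\end{equation*}
Gagliardo--Nirenberg on $\|\nabla v\|_{L^4}$ then lets me absorb $\nabla^2v$.

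\textbf{Step 2: $H^2$ estimate for $\rho-\bar\rho$.} I apply $\Delta$ to $\eqref{parabolic_system}_1$, written as the heat equation \eqref{1.1}, and test with $\Delta\rho$:
\begin{equation*}
\frac12\frac{d}{dt}\|\Delta\rho\|_{L^2}^2+\|\nabla\Delta\rho\|_{L^2}^2\le\|\nabla\Delta\rho\|_{L^2}\,\|\nabla\nabla\cdot(\rho v)\|_{L^2}.
\end{equation*}
The right side is bounded by $\|\nabla\Delta\rho\|_{L^2}\,(\|v\|_{L^\infty}\|\nabla^2\rho\|_{L^2}+\|\nabla\rho\|_{L^4}\|\nabla v\|_{L^4}+\|\nabla^2v\|_{L^2})$. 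The lower-order pieces come for free: $\|\nabla\rho\|_{L^2}$ follows from Proposition \ref{lem:H1_bound} and the density bounds, and the basic $L^2$ estimate on $\rho-\bar\rho$ from the heat equation uses $\rho v\in L^\infty_TL^2$.

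I then add Step 1 and Step 2, weighting Step 2 by a small $\varepsilon$ so that $\varepsilon\|\nabla^2 v\|^2$ is absorbed by the $\|\partial_t v\|^2$ dissipation through the elliptic bound. All cubic terms are controlled through Gagliardo--Nirenberg by $\|\nabla^2\rho\|^{a}\|\nabla^3\rho\|^{b}$ and similar products, and closed with Young and Gronwall. Where needed, $\|\nabla\rho\|_{L^\infty}$ appears only via $\|\nabla\rho\|_{L^4}^4\lesssim\|\nabla\rho\|_{L^2}\|\nabla^2\rho\|_{L^2}^3$, which is Gronwall-admissible.

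\textbf{Step 3: time derivatives.} I read off the remaining bounds from the equations. From $\partial_t\rho=\Delta\rho-\nabla\cdot(\rho v)$ I get $\partial_t\rho\in L^\infty L^2$, and $\partial_t\rho\in L^2H^1$ follows from $\rho\in L^2H^3$ and $v\in L^2H^2$. The bound $\partial_tv\in L^2L^2$ is already the dissipation of Step 1.

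The main obstacle I anticipate is the coupling term $\int\rho\nabla\log\rho\cdot\nabla v\,\partial_tv$ together with $\partial_t\rho|\nabla v|^2$ in Step 1. It is quartic in derivatives, and in 3D it must be balanced exactly against the $\nabla^3\rho$ dissipation of Step 2 and the elliptic gain on $\nabla^2 v$. My first attempt is to choose exponents with $\|\nabla v\|_{L^3}$ and $\|\nabla\rho\|_{L^6}$ and interpolate, so that every term becomes $\delta(\|\partial_tv\|^2+\|\nabla^3\rho\|^2)+C(1+\|\nabla v\|_{L^2}^2+\|\nabla^2\rho\|_{L^2}^2)^2$. That Gronwall would only be local, and I would fix this by using the time-integrability $\int_0^T(\|\nabla v\|_{L^2}^2+\|\nabla^2\rho\|_{L^2}^2)\,dt\le C$, which follows from Proposition \ref{prop:2.2} and the corollary on $\rho|\nabla^2\log\rho|^2$ once $\rho$ is bounded below.
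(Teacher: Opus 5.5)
Your architecture (an $H^1$ estimate for $v$ coupled to an $H^2$ estimate for $\rho-\bar\rho$, closed by Gronwall) is the paper's, and testing with $\partial_t v$ instead of $\Delta v$ is harmless. The gap is in the closing step.

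You spend the bound \eqref{eq:v_L_infty_est} on $\|v\|_{L^\infty}$ only on the term $\rho v\cdot\nabla v\cdot\partial_t v$. With the exponents you propose ($\|\nabla\rho\|_{L^6}\lesssim\|\nabla^2\rho\|_{L^2}$, $\|\nabla v\|_{L^3}\lesssim\|\nabla v\|_{L^2}^{1/2}\|\nabla^2 v\|_{L^2}^{1/2}$), the coupling term only gives
\[
\int|\nabla\rho|\,|\nabla v|\,|\partial_t v|\,dx\le\delta\big(\|\partial_t v\|_{L^2}^2+\|\nabla^2 v\|_{L^2}^2\big)+C\|\nabla^2\rho\|_{L^2}^4\|\nabla v\|_{L^2}^2 .
\]
This is \emph{cubic} in $E:=\|\nabla v\|_{L^2}^2+\|\nabla^2\rho\|_{L^2}^2$, not quadratic. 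The same happens in $\frac12\int\partial_t\rho|\nabla v|^2$: for instance, $\|\nabla v\|_{L^3}^3\le\delta\|\nabla^2v\|_{L^2}^2+C\|\nabla v\|_{L^2}^6$.

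Your elliptic absorption is worse. With $\|\nabla v\|_{L^4}\lesssim\|\nabla v\|_{L^2}^{1/4}\|\nabla^2v\|_{L^2}^{3/4}$ you get $\|\nabla^2v\|_{L^2}^2\lesssim\|\partial_tv\|_{L^2}^2+\|\nabla\rho\|_{L^4}^8\|\nabla v\|_{L^2}^2+\cdots$. This feeds terms like $\|\nabla^2\rho\|_{L^2}^6\|\nabla v\|_{L^2}^2$ into Step 2.

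Choosing other exponents will not help. Take the rescaling $\rho(\lambda x)$, $\lambda v(\lambda x)$, $\partial_t v\mapsto\lambda^3(\partial_t v)(\lambda x)$ with $\lambda\ge1$. It does not increase the bounds on $\rho^{\pm1}$, $\|\nabla\rho\|_{L^2}$ or $\|\sqrt\rho\,v\|_{L^2}$. It multiplies the coupling term and $D:=\|\partial_tv\|_{L^2}^2+\|\nabla^2v\|_{L^2}^2+\|\nabla^3\rho\|_{L^2}^2$ by $\lambda^3$, and $E$ by $\lambda$. So no bound of the form $\delta D+C(1+E)^2$ can follow from those quantities alone.

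Your rescue via $\int_0^TE\,dt\le C$ does not cover this case. It handles $E'\lesssim(1+E)^2$. From $E'\lesssim E^3$ it only yields $E'\le gE^2$ with $g\in L^1_t$. That is a Riccati inequality, which gives a bound only while $\int_0^tg<1/E(0)$, so it is not global for large data.

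The missing idea is that $\|v\|_{L^\infty}$ is exactly the quantity that is not invariant under this scaling. It must be used in the cross terms, and doing so makes the inequality \emph{linear}. The paper integrates by parts:
\[
\int|\nabla\rho|^2|\nabla v|^2dx=-2\int(\nabla^2\rho\cdot\nabla\rho)\cdot(\nabla v\cdot v)\,dx-\int|\nabla\rho|^2v\cdot\Delta v\,dx\le C\|v\|_{L^\infty}\big(\|\nabla\rho\|_{L^2}\|\nabla^2\rho\|_{L^6}\|\nabla v\|_{L^3}+\|\nabla\rho\|_{L^4}^2\|\nabla^2 v\|_{L^2}\big).
\]
Since $\|\nabla\rho\|_{L^2}$ is bounded, this is at most $\delta(\|\nabla^3\rho\|_{L^2}^2+\|\nabla^2v\|_{L^2}^2)+C(1+E)$. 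A linear Gronwall then closes on all of $[0,T]$.

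In your framework the same effect comes from $\|\nabla v\|_{L^4}^2\le C\|v\|_{L^\infty}\|\nabla^2 v\|_{L^2}$. Use it in three places: the coupling term, the term $\frac12\int\partial_t\rho|\nabla v|^2$, and the elliptic absorption. The elliptic bound then reads $\|\nabla^2v\|_{L^2}\lesssim\|\partial_tv\|_{L^2}+\|\nabla\rho\|_{L^4}^2+\cdots$, with $\|\nabla\rho\|_{L^4}^4\lesssim\|\nabla\rho\|_{L^2}^{5/2}\|\nabla^3\rho\|_{L^2}^{3/2}$. With that change your Steps 1--3 go through; without it they do not give a bound on $[0,T]$.
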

\begin{proof}
    Noting the strict positivity of the density, we have the following equivalent form of system \eqref{parabolic_system}
\begin{equation}\label{eq:transformed_system}
\begin{cases}
\partial_{t}\rho + \nabla \cdot (\rho v) - \Delta \rho = 0, \\ \partial_{t}v  + (v - 2 \nabla \log \rho) \cdot \nabla v - \Delta v + \frac {\gamma}{\gamma - 1} \nabla \rho^ {\gamma - 1} = 0.
\end{cases} 
\end{equation}

We start with the density estimates by taking the $L^2$ inner product of $\eqref{eq:transformed_system}_1$ with $\Delta \partial_{t}\rho.$ Utilizing integration by parts and Young's inequality, we derive 
\begin{equation}\label{eq:rho_evolution_H2}
    \frac{1}{2} \frac{d}{dt}\left\lVert \Delta \rho \right\rVert ^{2}_{L^{2}}+\left\lVert \nabla \partial_{t}\rho \right\rVert ^{2}_{L^{2}}=-\left\langle \nabla \nabla \cdot\left(\rho v\right),\nabla \partial_{t}\rho \right\rangle_{L^{2}}\leq \frac{1}{4}\left\lVert \nabla \partial_{t}\rho \right\rVert  ^{2}_{L^{2}}+C\left\lVert \nabla  \nabla \cdot \left(\rho v\right) \right\rVert ^{2}_{L^{2}}.
\end{equation}
Applying the gradient operator to $\eqref{eq:transformed_system}_1$ provides the standard $L^2$ estimate
\begin{equation}\label{eq:rho_elliptic_H3}
    \frac{1}{8}\left\lVert \nabla\Delta \rho \right\rVert ^{2}_{L^{2}}\leq \frac{1}{4}\left\lVert \nabla \partial_{t}\rho \right\rVert ^{2}_{L^{2}}+ \frac{1}{4}\left\lVert \nabla \nabla \cdot \left(\rho v\right) \right\rVert^{2}_{L^{2}}.
\end{equation}
Combining \eqref{eq:rho_evolution_H2},\eqref{eq:rho_elliptic_H3}, and employing \eqref{eq:v_L_infty_est}, \eqref{eq:19} and \eqref{eq:35}, we arrive at 
\begin{equation}\label{eq:rho_time_evolution}
    \frac{1}{2} \frac{d}{dt}\left\lVert \Delta \rho \right\rVert ^{2}_{L^{2}}+ \frac{1}{2} \left\lVert \nabla \partial_{t}\rho \right\rVert^{2}_{L^{2}}+ \frac{1}{8}\left\lVert \nabla\Delta \rho \right\rVert^{2}_{L^{2}} \leq C\left\lVert \Delta \rho \right\rVert  ^{2}_{L^{2}}+C\left\lVert \left| \nabla \rho \right|  \left| \nabla v \right| \right\rVert ^{2}_{L^{2}}+C_{5}\left\lVert \Delta v \right\rVert ^{2}_{L^{2}}.
\end{equation}

Then we proceed to the estimation of the effective velocity. Taking the $L^{2}$ inner product of $\eqref{eq:transformed_system}_2$ with $\Delta v,$ and employing integration by parts alongside \eqref{eq:10}, \eqref{eq:19}, \eqref{eq:v_L_infty_est} and \eqref{eq:35}, we have
\begin{equation}\label{eq:v_time_evolution}
    \begin{aligned}
\frac{1}{2} \frac{d}{dt}\left\lVert \nabla v \right\rVert ^{2}_{L^{2}}+ \left\lVert \Delta v \right\rVert ^{2}_{L^{2}}&= \left\langle (v-2\nabla \log \rho)\cdot \nabla v,\Delta v \right\rangle _{L^{2}}+ \frac{\gamma}{\gamma-1}\left\langle \nabla \rho^{\gamma-1},\Delta v \right\rangle_{L^{2}}\\
&\leq \frac{1}{2}\left\lVert \Delta v \right\rVert ^{2}_{L^{2}}+C\left\lVert \nabla v \right\rVert ^{2}_{L^{2}}+C\left\lVert \left|  \nabla \rho\right|\left| \nabla v \right|   \right\rVert ^{2}_{L^{2}}+C.
\end{aligned}
\end{equation}

Next, we handle the term $C\left\lVert \left| \nabla \rho \right| \left| \nabla v \right| \right\rVert^{2}_{L^{2}}.$ Using H\"{o}lder's inequality, Sobolev embeddings, \eqref{v energy}, \eqref{eq:19}, \eqref{eq:35} and \eqref{eq:v_L_infty_est}, we have 
\begin{equation}\label{eq:cross term}
    \begin{aligned}
C\left\lVert \left| \nabla \rho \right| \left| \nabla v \right| \right\rVert^{2}_{L^{2}}=& -2C\int(\nabla^{2}\rho\cdot \nabla \rho)\cdot(\nabla v\cdot v )dx-C\int \left| \nabla \rho \right| ^{2}v\cdot\Delta vdx\\
\leq& C\int | \nabla^ {2} \rho | | \nabla \rho | | v | | \nabla v | d x +  C\int | \nabla \rho | ^ {2} | v | | \nabla^ {2} v | d x\\
\leq& C\left\lVert v \right\rVert _{L^{\infty}}\left\lVert \nabla \rho \right\rVert _{L^{2}}\left\lVert \nabla^{2}\rho \right\rVert _{L^{6}}\left\lVert \nabla v \right\rVert _{L^{3}}+ C\left\lVert v \right\rVert _{L^{\infty}}\left\lVert \nabla \rho \right\rVert ^{2}_{L^{4}}\left\lVert \nabla^{2}v \right\rVert _{L^{2}}\\
\leq&  \frac{1}{32}\left\lVert \nabla^{2}\rho \right\rVert ^{2}_{H^{1}}+ \frac{C_{5}}{4}\left\lVert \nabla v \right\rVert ^{2}_{H^{1}}+C\left\lVert \nabla v \right\rVert ^{2}_{L^{2}}\\
&+\frac{1}{32}\left\lVert \nabla^{2}\rho \right\rVert _{H^{1}}^{2}+ \frac{C_{5}}{4}\left\lVert \nabla^{2}v \right\rVert ^{2}_{L^{2}}+ C\left\lVert \Delta \rho \right\rVert ^{2}_{L^{2}}+C.
\end{aligned}
\end{equation}
We multiply \eqref{eq:v_time_evolution} by $4C_{5}$ and add it to \eqref{eq:rho_time_evolution}, combining \eqref{eq:cross term}, we derive 
\begin{equation}
\begin{aligned}
    &\frac{1}{2} \frac{d}{dt}\|\Delta \rho\|^{2}_{L^{2}} 
    + 2C_{5} \frac{d}{dt}\|\nabla v\|^{2}_{L^{2}} 
    + \frac{1}{2}\|\nabla \partial_{t}\rho\|^{2}_{L^{2}} + \frac{1}{16}\|\nabla\Delta \rho\|^{2}_{L^{2}} 
    + \frac{C_{5}}{2}\|\Delta v\|^{2}_{L^{2}}
   \\ \leq &C\|\Delta \rho\|^{2}_{L^{2}} + C\|\nabla v\|^{2}_{L^{2}} + C.
\end{aligned}
\end{equation}
Utilizing \eqref{eq:10}, \eqref{v energy}, \eqref{eq:19}, \eqref{eq:35}, \eqref{eq:H1_rho_bound} and Gronwall’s inequality yields
\begin{equation}\label{eq: high order temp}
    \sup_{0 \le t \le T}(\left\lVert \rho -\bar{\rho}\right\rVert _{H^{2}}+\left\lVert v \right\rVert _{H^{1}})+\int^{T}_{0}\left( \left\lVert \partial_{t}\rho \right\rVert ^{2}_{H^{1}}+\left\lVert \rho-\bar{\rho} \right\rVert^{2}_{H^{3}}+  \left\lVert v \right\rVert ^{2}_{H^{2}}  \right)dt\leq C.
\end{equation}
Finally, employing $\eqref{eq:transformed_system},$ \eqref{eq: high order temp}, \eqref{eq:v_L_infty_est} and \eqref{eq:35}, we obtain
\begin{equation}
    \begin{aligned}
\left\lVert \partial_{t}\rho \right\rVert_{L^{2}}&\leq C\left\lVert \Delta\rho \right\rVert _{L^{2}}+C\left\lVert \rho \right\rVert _{L^{\infty}}\left\lVert \nabla v \right\rVert _{L^{2}}+C\left\lVert \nabla \rho \right\rVert _{L^{2}}\left\lVert v \right\rVert _{L^{\infty}}\leq C,
\end{aligned}
\end{equation}
and 
\begin{equation}
    \left\lVert \partial_{t}v \right\rVert _{L^{2}}\leq C\left\lVert v \right\rVert _{L^{\infty}}\left\lVert \nabla v \right\rVert _{L^{2}}+C\left\lVert \nabla \rho \right\rVert _{L^{6}}\left\lVert \nabla v \right\rVert _{L^{3}}+C\left\lVert \Delta v \right\rVert _{L^{2}}+C\left\lVert \nabla \rho \right\rVert _{L^{2}}\leq C+C\left\lVert \Delta v \right\rVert _{L^{2}},
\end{equation}
Squaring this result and integrating over $[0,T],$ and combining with \eqref{eq: high order temp} completes the proof.
\end{proof} 
\begin{proposition}
    There exists a constant $C > 0$ depending on $T ^ { * } ,$ $\gamma ,$ $\lVert \rho^{-1} _ { 0 } \rVert _ { L ^ { \infty } } ,$ $\lVert \rho _ { 0 }-\bar{\rho} \rVert _ { H ^ { 2 } }$ and $\Vert v _ { 0 } \Vert _ { H ^ { 2 } }$ such that 
    \begin{equation}\label{prop:ho_est_2}
        \sup_{0 \le t \le T}\left( \left\lVert \partial_{t}v \right\rVert _{L^{2}}+\left\lVert v \right\rVert _{H^{2}} \right) + \int^{T}_{0}\left( \left\lVert \partial_{t}v \right\rVert ^{2}_{H^{1}}+ \left\lVert v \right\rVert ^{2}_{H^{3}} \right) dt\leq C.
    \end{equation}
\end{proposition}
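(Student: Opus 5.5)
The natural route is to differentiate the $v$-equation of \eqref{eq:transformed_system} in time and run an $L^2$ energy estimate for $\partial_t v$. Ellipticity of $-\Delta$ then converts the resulting bounds into $H^2$ and $H^3$ control of $v$. Throughout we may use the bounds $\rho^{-1},\rho\le C$ from \eqref{eq:35} and \eqref{eq:19}, $\|v\|_{L^\infty_{T,x}}\le C$ from \eqref{eq:v_L_infty_est}, and the estimate \eqref{prop:ho_est_1} of Proposition \ref{pro5.1}. The latter gives $\sup_t\|\rho-\bar\rho\|_{H^2}$, $\sup_t\|\partial_t\rho\|_{L^2}$, and the integrals $\int_0^T(\|\partial_t\rho\|_{H^1}^2+\|\rho-\bar\rho\|_{H^3}^2+\|v\|_{H^2}^2)\,dt$.

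\textbf{Step 1 (estimate for $\partial_t v$).} Write $w=\partial_t v$. Differentiating $\eqref{eq:transformed_system}_2$ in $t$ gives
\begin{equation*}
\partial_t w-\Delta w+(v-2\nabla\log\rho)\cdot\nabla w=-(w-2\nabla(\rho^{-1}\partial_t\rho))\cdot\nabla v-\tfrac{\gamma}{\gamma-1}\nabla\partial_t\rho^{\gamma-1}.
\end{equation*}
Testing with $w$ yields $\frac12\frac{d}{dt}\|w\|_{L^2}^2+\|\nabla w\|_{L^2}^2$ on the left. We then bound the terms on the right one by one.

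The transport term is $\int (v-2\nabla\log\rho)\cdot\nabla w\, w$. It is bounded by $(\|v\|_{L^\infty}+\|\nabla\rho\|_{L^\infty})\|w\|_{L^2}\|\nabla w\|_{L^2}$, with $\|\nabla\rho\|_{L^\infty}\le C\|\rho-\bar\rho\|_{H^3}$. Alternatively, we estimate $\|\nabla\rho\|_{L^6}\|w\|_{L^3}\|\nabla w\|_{L^2}$ and interpolate $L^3$. Either way it is absorbed up to $C(1+\|\rho-\bar\rho\|_{H^3}^2)\|w\|_{L^2}^2$, whose coefficient is integrable in time.

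The term $\int w\cdot\nabla v\, w$ is handled by $\|\nabla v\|_{L^2}\|w\|_{L^4}^2\le C\|w\|_{L^2}^{1/2}\|\nabla w\|_{L^2}^{3/2}$, which gives $C\|w\|_{L^2}^2$ after Young's inequality.

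For the term containing $\nabla(\rho^{-1}\partial_t\rho)\cdot\nabla v\,w$, we integrate by parts to move the derivative off $\partial_t\rho$. This produces $\int \rho^{-1}\partial_t\rho\,(\nabla^2 v\, w+\nabla v\,\nabla w)$. It is bounded by $\|\partial_t\rho\|_{L^3}(\|\nabla^2 v\|_{L^2}\|w\|_{L^6}+\|\nabla v\|_{L^6}\|\nabla w\|_{L^2})\le \tfrac18\|\nabla w\|^2_{L^2}+C\|\partial_t\rho\|_{H^1}^2\|v\|_{H^2}^2$.

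The pressure term becomes $\int\partial_t\rho^{\gamma-1}\nabla\!\cdot w$ after integration by parts, and is bounded by $\tfrac18\|\nabla w\|^2+C\|\partial_t\rho\|_{L^2}^2$.

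\textbf{Step 2 (closing the estimate).} We then need $\|v\|_{H^2}^2$ in $L^\infty_t$ multiplied by an $L^1_t$ quantity, or the other way round. From the equation,
\begin{equation*}
\|\Delta v\|_{L^2}\le \|w\|_{L^2}+C\|\nabla v\|_{L^2}+C\|\nabla\rho\|_{L^\infty}\|\nabla v\|_{L^2}+C,
\end{equation*}
so that $\|v\|_{H^2}^2\le C(1+\|\rho-\bar\rho\|_{H^3}^2)+C\|w\|_{L^2}^2$. Substituting this, we arrive at
\begin{equation*}
\frac{d}{dt}\|w\|_{L^2}^2+\|\nabla w\|_{L^2}^2\le C\big(1+\|\rho-\bar\rho\|_{H^3}^2+\|\partial_t\rho\|_{H^1}^2\big)(1+\|w\|_{L^2}^2),
\end{equation*}
where the bracketed coefficient is in $L^1(0,T)$ by \eqref{prop:ho_est_1}. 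For the initial value, $\|w(0)\|_{L^2}$ is controlled through the equation at $t=0$ by $\|v_0\|_{H^2}$, $\|\rho_0-\bar\rho\|_{H^2}$, and $\|\rho_0^{-1}\|_{L^\infty}$, using $\|\nabla\rho_0\|_{L^4}\|\nabla v_0\|_{L^4}$ for the product. Gronwall's inequality then gives $\sup_t\|\partial_t v\|_{L^2}+\int_0^T\|\nabla\partial_t v\|_{L^2}^2\,dt\le C$, and hence $\sup_t\|v\|_{H^2}$ fails to follow only where $\sup_t\|\rho-\bar\rho\|_{H^3}$ would be needed.

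\textbf{Step 3 (avoiding $\|\nabla\rho\|_{L^\infty}$ at fixed time, and the $H^3$ bound).} The step I expect to be most delicate is precisely this: $\sup_t$ bounds must not rely on $\|\nabla\rho\|_{L^\infty}$ pointwise in time, since only $L^2_t H^3$ control of $\rho$ is available. For the $\sup_t\|v\|_{H^2}$ bound I will instead estimate $\|\nabla\rho\cdot\nabla v\|_{L^2}\le\|\nabla\rho\|_{L^6}\|\nabla v\|_{L^3}\le C\|\nabla v\|_{L^2}^{1/2}\|\nabla v\|_{H^1}^{1/2}$ and absorb, using only the available $\sup_t\|\rho-\bar\rho\|_{H^2}$. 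Finally, for $\int_0^T\|v\|_{H^3}^2\,dt$, I apply $\nabla$ to the $v$-equation and use elliptic regularity:
\begin{equation*}
\|\nabla\Delta v\|_{L^2}\le \|\nabla w\|_{L^2}+C\|v\|_{H^2}\big(1+\|\rho-\bar\rho\|_{H^3}\big)+C\|\rho-\bar\rho\|_{H^2}.
\end{equation*}
Squaring and integrating in time, with the bounds already obtained, gives the stated $L^2_tH^3$ and $L^2_tH^1$ bounds for $v$ and $\partial_t v$.
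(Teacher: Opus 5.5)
\textbf{Gap in Step 2.} Your strategy is the same as the paper's: differentiate the $v$-equation in $t$, test with $w=\partial_t v$, then recover $H^2$ and $H^3$ control of $v$ from the equation. Your Step 1 bounds are fine. The problem is the Gronwall closure in Step 2.

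After integrating the cross term by parts you are left with $C\|\partial_t\rho\|_{H^1}^2\|v\|_{H^2}^2$. You then substitute $\|v\|_{H^2}^2\le C(1+\|\rho-\bar\rho\|_{H^3}^2)+C\|w\|_{L^2}^2$, which you obtained using $\|\nabla\rho\|_{L^\infty}$. This creates the term $C\|\partial_t\rho\|_{H^1}^2\|\rho-\bar\rho\|_{H^3}^2$. By \eqref{prop:ho_est_1} both factors are only known to lie in $L^1(0,T)$, so their product need not be integrable. It is also not dominated by $C(1+\|\rho-\bar\rho\|_{H^3}^2+\|\partial_t\rho\|_{H^1}^2)(1+\|w\|_{L^2}^2)$. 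Hence the differential inequality you display does not follow from your substitution, and the Gronwall step is unjustified as written. Using $\|\partial_t\rho\|_{L^3}^2\le C\|\partial_t\rho\|_{L^2}\|\nabla\partial_t\rho\|_{L^2}$ does not rescue it: that factor is in $L^2_t$, still multiplied by the merely $L^1_t$ quantity $\|v\|_{H^2}^2$.

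\textbf{The repair.} The fix is the estimate you reserve for Step 3, but it must be used already in Step 2. Since $\rho^{\pm1}$, $\|v\|_{L^\infty_{T,x}}$, $\sup_t\|v\|_{H^1}$ and $\sup_t\|\rho-\bar\rho\|_{H^2}$ are bounded, you have
\[
\|\nabla\log\rho\cdot\nabla v\|_{L^2}\le C\|\nabla\rho\|_{L^6}\|\nabla v\|_{L^3}\le C\|\nabla v\|_{L^2}^{1/2}\|\nabla^2 v\|_{L^2}^{1/2}\le \tfrac12\|\nabla^2 v\|_{L^2}+C.
\]
With $\|\nabla^2 v\|_{L^2}=\|\Delta v\|_{L^2}$, this gives $\|v\|_{H^2}\le C(1+\|w\|_{L^2})$ pointwise in time. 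Then $\|\partial_t\rho\|_{H^1}^2\|v\|_{H^2}^2\le C\|\partial_t\rho\|_{H^1}^2(1+\|w\|_{L^2}^2)$. Your displayed inequality now holds, and Gronwall closes because the coefficient is in $L^1(0,T)$. The same pointwise bound gives $\sup_t\|v\|_{H^2}$ directly, and your Step 3 $H^3$ estimate is then fine.

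\textbf{Comparison with the paper.} The paper handles this term differently. It adds the elliptic estimate for $\nabla\Delta v$ to the $\partial_t v$ energy identity, so that $\|\nabla^3 v\|_{L^2}^2$ appears as dissipation. It then uses the uniform bound $\sup_t\|\partial_t\rho\|_{L^2}$ from Proposition \ref{pro5.1}, together with $\|\Delta v\|_{L^3}$ and $\|\nabla v\|_{L^\infty}$ interpolated between $\nabla^2 v$ and $\nabla^3 v$. As a result its right-hand side contains only $\|\nabla^2 v\|_{L^2}^2\in L^1(0,T)$. Once repaired, your version avoids the coupled $H^3$ dissipation and recovers $\int_0^T\|v\|_{H^3}^2\,dt$ afterwards. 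Your explicit control of $\|\partial_t v(0)\|_{L^2}$ is a point the paper leaves implicit.
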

\begin{proof}
    To derive the evolution of the time derivatives and high-order spatial derivatives of the effective velocity, we differentiate $\eqref{eq:transformed_system}_2$ with respect to $t$ and test it against $\partial_{t}v.$ Simultaneously, we apply $\nabla$ to $\eqref{eq:transformed_system}_2$ and taking the squared $L^{2}$-norm. Performing a linear combination of these estimates and utilizing integration by parts, Young's inequality, \eqref{eq:v_L_infty_est}, \eqref{eq:35}, \eqref{prop:ho_est_1}, we obtain 
    \begin{equation}
        \begin{aligned}
&\frac{1}{2} \frac{d}{dt}\left\lVert \partial_{t}v \right\rVert ^{2}_{L^{2}}+\left\lVert \nabla \partial_{t}v \right\rVert ^{2}_{L^{2}}+ \frac{1}{12}\left\lVert \nabla\Delta v \right\rVert ^{2}_{L^{2}}\\
\leq &\frac{1}{4}\left\lVert \nabla \partial_{t}v \right\rVert ^{2}_{L^{2}}+C\left( \left\lVert \left| v \right| \left|  \partial_{t}v\right|  \right\rVert ^{2}_{L^{2}}+\left\lVert \left| \partial_{t}\rho \right|  \left| \nabla v \right| \right\rVert ^{2}_{L^{2}}+\left\lVert \left| \nabla \rho \right| \left| \partial_{t}v \right|  \right\rVert^{2}_{L^{2}}\right.\\
&+\left.\left\lVert \partial_{t}\rho \right\rVert ^{2}_{L^{2}}  \right)+C \int \left| \partial_{t}\rho \right| \left| \Delta v \right| \left| \partial_{t}v \right| dx\\
&+ \frac{1}{4}\left\lVert \nabla \partial_{t}v \right\rVert ^{2}_{L^{2}}+C\left( \left\lVert \nabla v \right\rVert ^{4}_{L^{4}}+\left\lVert \left| v \right| \left| \nabla^{2}v \right|  \right\rVert ^{2}_{L^{2}}+\left\lVert \left| \nabla \rho \right|^{2}\left| \nabla v \right|  \right\rVert _{L^{2}}\right.\\
&+\left.\left\lVert \left| \nabla^{2}\rho \right|\left| \nabla v \right|   \right\rVert^{2}_{L^{2}} +\left\lVert \left| \nabla \rho \right| \left| \nabla^{2}v \right|  \right\rVert _{L^{2}}+\left\lVert \nabla \rho \right\rVert ^{4}_{L^{4}}+\left\lVert \nabla^{2}\rho \right\rVert  ^{2}_{L^{2}}\right) \\
\le& \frac{1}{2} \|\nabla \partial_{t}v\|_{L^{2}}^{2} + C \left( \|v\|_{L^{\infty}}^{2} \|\partial_{t}v\|_{L^{2}}^{2} +\|\partial_{t}\rho\|_{L^{2}}^{2} \|\nabla v\|_{L^{\infty}}^{2} \right.\\
&+\left. \|\nabla \rho\|_{L^{\infty}}^{2} \|\partial_{t}v\|_{L^{2}}^{2} + \|\partial_{t}\rho\|_{L^{2}}^{2}+\|\partial_{t}\rho\|_{L^{2}} \|\Delta v\|_{L^{3}} \|\partial_{t}v\|_{L^{6}} \right)   \\
& + C \left( \|v\|_{L^{\infty}}\left\lVert \nabla v \right\rVert^{\frac{3}{2}}_{L^2}\left\lVert \nabla v \right\rVert^{\frac{1}{2}}_{L^{6}}\left\lVert \nabla^{2}v \right\rVert _{L^{6}}   + \|v\|_{L^{\infty}}^{2} \|\nabla^{2} v\|_{L^{2}}^{2} \right.\\
&+\left. \|\nabla \rho\|_{L^{4}}^{4} \|\nabla v\|_{L^{\infty}}^{2}  + \|\nabla^{2} \rho\|_{L^{2}}^{2} \|\nabla v\|_{L^{\infty}}^{2} + \|\nabla \rho\|_{L^{6}}^{2} \|\nabla^{2} v\|_{L^{3}}^{2} + \|\nabla \rho\|_{L^{4}}^{4} + \|\nabla^{2} \rho\|_{L^{2}}^{2} \right) \\
\le & \frac{3}{4} \|\nabla \partial_{t}v\|_{L^{2}}^{2} + \frac{1}{24}\left\lVert \nabla^{3} v \right\rVert^{2}_{L^{2}} + C \left( 1 + \|\nabla \rho\|_{H^{2}}^{2} \right) \|\partial_{t}v\|_{L^{2}}^{2} + C \left( 1 + \|\nabla v\|_{L^{2}}^{2} + \|\nabla^{2} v\|_{L^{2}}^{2} \right),
\end{aligned}
    \end{equation}that is 
\begin{equation}
    \begin{aligned}
&\frac{1}{2} \frac{d}{dt}\|\partial_{t}v\|^{2}_{L^{2}} 
+ \frac{1}{4}\|\nabla \partial_{t}v\|^{2}_{L^{2}} 
+ \frac{1}{24}\|\nabla\Delta v\|^{2}_{L^{2}} \\
\leq &C \left( 1 + \|\nabla \rho\|_{H^{2}}^{2} \right) \|\partial_{t}v\|_{L^{2}}^{2} 
+ C \left( 1 + \|\nabla v\|_{L^{2}}^{2} + \|\nabla^{2} v\|_{L^{2}}^{2} \right).
\end{aligned}
\end{equation}
    By Gronwall’s inequality and \eqref{prop:ho_est_1}, we have 
    \begin{equation}\label{eq:ho_tempt2}
        \sup_{0 \le t \le T}\left\lVert \partial_{t}v \right\rVert _{L^{2}}+\int^{T}_{0}\left( \left\lVert \partial_{t}v \right\rVert ^{2}_{H^{1}}+ \left\lVert v \right\rVert ^{2}_{H^{3}} \right) dt\leq C.
    \end{equation}
    Finally, by applying standard elliptic regularity estimates to $\eqref{eq:transformed_system}_2$ and together with \eqref{eq:v_L_infty_est}, \eqref{eq:35}, \eqref{prop:ho_est_1} and \eqref{eq:ho_tempt2}, we obtain 
   \begin{equation}
\begin{split}
    \|\nabla^{2}v\|_{L^{2}} 
    &\leq C \bigg( \|\partial_{t}v\|_{L^{2}} + \|v\|_{L^{\infty}}\|\nabla v\|_{L^{2}} \\
    &\quad + \|\rho^{-1}\|_{L^{\infty}} \|\nabla \rho\|_{L^{6}} \|\nabla v\|^{\frac{1}{2}}_{L^{2}}\|\nabla v\|^{\frac{1}{2}}_{H^{1}} 
    + \|\nabla \rho\|_{L^{2}} \bigg) \\
    &\leq \frac{1}{2}\|\nabla^{2}v\|_{L^{2}} + C.
\end{split}
\end{equation}
   That is 
    \begin{equation}
        \sup_{0 \le t \le T} \left\lVert \nabla^ {2} v \right\rVert _ {L ^ {2}} \leq C. 
    \end{equation}
\end{proof}
\begin{proposition}\label{pro5.3}
    There exists a constant $C > 0$ depending on $T ^ { * } ,$ $\gamma ,$ $\lVert \rho^{-1} _ { 0 } \rVert _ { L ^ { \infty } } ,$ $\lVert \rho _ { 0 }-\bar{\rho} \rVert _ { H ^ { 3 } }$ and $\Vert v _ { 0 } \Vert _ { H ^ { 2 } }$ such that
    \begin{equation}
        \sup_{0 \le t \le T} \left(\| \partial_{t}\rho  \| _ {H ^ {1}}+\| \rho-\bar{\rho} \| _ {H ^ {3}}  \right) + \int_ {0} ^ {T} \left(\| \partial_{tt}\rho \| _ {L ^ {2}} ^ {2}+\| \partial_{t}\rho  \| _ {H ^ {2}} ^ {2} +\| \rho-\bar{\rho} \| _ {H ^ {4}} ^ {2} \right) d t \leq C. 
    \end{equation}
\end{proposition}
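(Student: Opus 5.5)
We propose to treat $\eqref{eq:transformed_system}_1$ as a heat equation for $\rho-\bar\rho$ with source $-\nabla\cdot(\rho v)$. All bounds already available are used as coefficients: the two-sided density bounds \eqref{eq:19} and \eqref{eq:35}, the bound $\|v\|_{L^\infty}\le C$ (from \eqref{eq:v_L_infty_est} and \eqref{eq:35}), and the estimates \eqref{prop:ho_est_1} and \eqref{prop:ho_est_2}. In particular we may take as known
$$\sup_t\big(\|\rho-\bar\rho\|_{H^2}+\|\partial_t\rho\|_{L^2}+\|v\|_{H^2}+\|\partial_t v\|_{L^2}\big)+\int_0^T\big(\|v\|_{H^3}^2+\|\partial_t v\|_{H^1}^2+\|\rho-\bar\rho\|_{H^3}^2\big)dt\le C.$$

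\textbf{Step 1 (time derivative).} Differentiate $\eqref{eq:transformed_system}_1$ in $t$ to get $\partial_t\rho_t-\Delta\rho_t=-\nabla\cdot(\rho_t v+\rho v_t)$. Test this with $-\Delta\rho_t$ (equivalently, take $\nabla$ and test with $\nabla\rho_t$). This gives
$$\frac12\frac{d}{dt}\|\nabla\rho_t\|_{L^2}^2+\|\Delta\rho_t\|_{L^2}^2\le \frac12\|\Delta\rho_t\|_{L^2}^2+C\|\nabla\cdot(\rho_t v+\rho v_t)\|_{L^2}^2 .$$
The source terms are bounded by
$$\|\nabla\rho_t\|_{L^2}\|v\|_{L^\infty}+\|\rho_t\|_{L^3}\|\nabla v\|_{L^6}+\|\nabla\rho\|_{L^\infty}\|v_t\|_{L^2}+\|\rho\|_{L^\infty}\|\nabla v_t\|_{L^2}.$$
The factor $\|\nabla\rho\|_{L^\infty}\lesssim\|\rho-\bar\rho\|_{H^3}$ lies in $L^2_t$. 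The term $\|\rho_t\|_{L^3}$ is interpolated between $\|\rho_t\|_{L^2}$ and $\|\nabla\rho_t\|_{L^2}$. Together with $\|\nabla v_t\|_{L^2}\in L^2_t$, Gronwall gives $\sup_t\|\nabla\rho_t\|_{L^2}+\int_0^T\|\nabla^2\rho_t\|^2_{L^2}dt\le C$. The initial value $\|\nabla\rho_t(0)\|_{L^2}$ is controlled by $\|\rho_0-\bar\rho\|_{H^3}$ and $\|v_0\|_{H^2}$ through the equation. Testing the same differentiated equation with $\rho_t$ adds the $L^2$ part, and $\int_0^T\|\partial_{tt}\rho\|^2_{L^2}dt$ is then read off from the equation itself.

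\textbf{Step 2 (elliptic recovery of $H^3$ and $H^4$).} Rewrite $\eqref{eq:transformed_system}_1$ as $\Delta(\rho-\bar\rho)=\rho_t+\nabla\cdot(\rho v)$ and apply $\nabla$, respectively $\nabla^2$. This gives
$$\|\nabla^3\rho\|_{L^2}\le C\big(\|\nabla\rho_t\|_{L^2}+\|\nabla^2(\rho v)\|_{L^2}\big),\qquad \|\nabla^4\rho\|_{L^2}\le C\big(\|\nabla^2\rho_t\|_{L^2}+\|\nabla^3(\rho v)\|_{L^2}\big).$$
For the first, $\|\nabla^2(\rho v)\|_{L^2}\lesssim \|\nabla^2\rho\|_{L^2}\|v\|_{L^\infty}+\|\nabla\rho\|_{L^4}\|\nabla v\|_{L^4}+\|\rho\|_{L^\infty}\|\nabla^2 v\|_{L^2}$, which is bounded uniformly in time. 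Hence $\sup_t\|\rho-\bar\rho\|_{H^3}\le C$.

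For the second, $\|\nabla^3(\rho v)\|_{L^2}$ contains $\|\nabla^3\rho\|_{L^2}\|v\|_{L^\infty}$, $\|\nabla^2\rho\|_{L^4}\|\nabla v\|_{L^4}$, $\|\nabla\rho\|_{L^\infty}\|\nabla^2v\|_{L^2}$ and $\|\rho\|_{L^\infty}\|\nabla^3 v\|_{L^2}$. Each is in $L^2_t$ given the $\sup_t$ bounds just obtained and $v\in L^2_tH^3$. This yields $\int_0^T\|\rho-\bar\rho\|^2_{H^4}dt\le C$.

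\textbf{Main obstacle.} We expect the delicate point to be Step 1, specifically the term $\rho\nabla v_t$. It is only $L^2_t$, not bounded in time, so it must be absorbed through the dissipation $\|\Delta\rho_t\|^2$ rather than estimated pointwise in time. Applying $\nabla$ to $\nabla\cdot(\rho v_t)$ would produce $\nabla^2 v_t$, which is not controlled. To avoid this, we test with $-\Delta\rho_t$ and integrate by parts only once, so that only $\|\nabla\cdot(\rho v_t)\|_{L^2}\lesssim\|\nabla\rho\|_{L^\infty}\|v_t\|_{L^2}+\|\nabla v_t\|_{L^2}$ appears. A second subtlety is that $\|\nabla\rho\|_{L^\infty}$ is bounded only in $L^2_t$ at this stage. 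We would therefore place it as a Gronwall coefficient, or interpolate it as $\|\nabla\rho\|_{L^\infty}\lesssim\|\nabla^2\rho\|_{L^2}^{1/2}\|\nabla^3\rho\|_{L^2}^{1/2}$ and then use the elliptic bound of Step 2 in terms of $\|\nabla\rho_t\|_{L^2}$. This closes a Gronwall inequality for $\|\nabla\rho_t\|_{L^2}^2$ with $L^1_t$ coefficients.
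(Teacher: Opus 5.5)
Your proposal is correct and follows essentially the paper's route: differentiate $\eqref{eq:transformed_system}_1$ in time, derive an energy estimate for $\|\nabla\partial_t\rho\|_{L^2}^2$ with dissipation $\|\Delta\partial_t\rho\|_{L^2}^2$ and source $\partial_t\nabla\cdot(\rho v)$ controlled by the earlier propositions, and then recover $\sup_t\|\nabla^3\rho\|_{L^2}$ and $\int_0^T\|\rho-\bar\rho\|_{H^4}^2\,dt$ elliptically from the equation. The differences are cosmetic. The paper tests with $\partial_{tt}\rho$, recovers $\Delta\partial_t\rho$ from the equation, and folds the $\Delta\Delta\rho$ bound into the same inequality. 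You test with $-\Delta\partial_t\rho$ and read off $\partial_{tt}\rho$ afterwards, and you also check $\|\nabla\partial_t\rho(0)\|_{L^2}$ via $\|\rho_0-\bar\rho\|_{H^3}$ and $\|v_0\|_{H^2}$, a point the paper leaves implicit.
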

\begin{proof}
    We differentiate $\eqref{eq:transformed_system}_1$ with respect to $t$ and take the $L^2$ inner product with $\partial_{tt}\rho.$ Simultaneously, utilizing the $L^2$ estimate of this differentiated equation to control $\|\Delta \partial_{t}\rho\|_{L^{2}}.$ Through linear combination, we derive 
    \begin{equation}\label{eq:ho_est_tp1}
        \begin{aligned}
&\frac{1}{2} \frac{d}{dt}\left\lVert \nabla \partial_{t}\rho \right\rVert ^{2}_{L^{2}}+\left\lVert \partial_{tt} \rho \right\rVert ^{2}_{L^{2}}+ \frac{1}{8}\left\lVert \Delta \partial_{t}\rho \right\rVert ^{2}_{L^{2}}\\
\leq& -\left\langle \partial_{t}(\nabla \cdot \left(\rho v\right)),\partial_{tt}\rho \right\rangle_{L^{2}}+ \frac{1}{4}\left\lVert \partial_{tt}\rho \right\rVert^{2}_{L^{2}}+  \frac{1}{4} \left\lVert \partial_{t}(\nabla \cdot \left(\rho v\right)) \right\rVert ^{2}_{L^{2}}  \\
\leq& \frac{1}{2}\left\lVert \partial_{tt}\rho \right\rVert ^{2}_{L^{2}}+C\left\lVert \partial_{t}(\nabla \cdot\left(\rho v\right)) \right\rVert ^{2}_{L^{2}}.
\end{aligned}
    \end{equation}
    Next, applying $\Delta$ to $\eqref{eq:transformed_system}_1$ and performing $L^{2}$ estimate yields
    \begin{equation}\label{eq:ho_est_tp2}
        \frac{1}{32}\left\lVert \Delta\Delta \rho \right\rVert ^{2}_{L^{2}}\leq \frac{1}{16}\left\lVert \Delta \partial_{t}\rho \right\rVert^{2}_{L^{2}}+ \frac{1}{16}\left\lVert \Delta \nabla \cdot \left(\rho v\right) \right\rVert ^{2}_{L^{2}}.
    \end{equation}
    Adding \eqref{eq:ho_est_tp1}, \eqref{eq:ho_est_tp2}, together with \eqref{prop:ho_est_1}, \eqref{prop:ho_est_2}, we have 
    \begin{equation}
        \begin{aligned}
&\frac{1}{2} \frac{d}{dt}\left\lVert \nabla \partial_{t}\rho \right\rVert ^{2}_{L^{2}}+ \frac{1}{2}\left\lVert \partial_{tt}\rho \right\rVert^{2}_{L^{2}}+ \frac{1}{16} \left\lVert \Delta \partial_{t}\rho \right\rVert  ^{2}_{L^{2}}+ \frac{1}{32} \left\lVert \Delta\Delta \rho \right\rVert ^{2}_{L^{2}}\\
\leq&C\left\lVert \partial_{t}(\nabla \cdot \left(\rho v\right)) \right\rVert ^{2}_{L^{2}}+ C\left\lVert \Delta \nabla \cdot \left(\rho v\right) \right\rVert ^{2}_{L^{2}}\\
\leq&C\left( \left\lVert \partial_{t} \rho \right\rVert ^{2}_{L^{2}}\left\lVert \nabla v \right\rVert ^{2}_{L^{\infty}}+\left\lVert \rho \right\rVert ^{2}_{L^{\infty}}\left\lVert \nabla \partial_{t}v \right\rVert ^{2}_{L^{2}}+ \left\lVert \nabla \partial_{t}\rho \right\rVert ^{2}_{L^{2}}\left\lVert v \right\rVert ^{2}_{L^{\infty}}+\left\lVert \nabla \rho \right\rVert ^{2}_{L^{\infty}}\left\lVert \partial_{t}v \right\rVert ^{2}_{L^{2}} \right.\\
&+\left. \left\lVert \nabla^{3}\rho \right\rVert^{2}_{L^{2}}\left\lVert v \right\rVert ^{2}_{L^{\infty}}+\left\lVert \nabla^{2}\rho \right\rVert  ^{2}_{L^{2}}\left\lVert \nabla v \right\rVert ^{2}_{L^{\infty}}+\left\lVert \nabla \rho \right\rVert^{2}_{H^{1}}\left\lVert \nabla^{2}v \right\rVert ^{2}_{L^{3}} +\left\lVert \rho \right\rVert ^{2}_{L^{\infty}}\left\lVert \nabla^{3}v \right\rVert ^{2}_{L^{2}}\right) \\
\leq&C\left( \left\lVert \nabla v \right\rVert ^{2}_{H^{2}}+\left\lVert \nabla \partial_{t}v \right\rVert^{2}_{L^{2}}+ \left\lVert \nabla \partial_{t}\rho \right\rVert^{2}_{L^{2}}+ \left\lVert \nabla \rho \right\rVert ^{2}_{H^{2}}  \right) .
\end{aligned}
    \end{equation}
    Integrating the inequality above over $[0,T]$ and utilizing \eqref{prop:ho_est_1}, \eqref{prop:ho_est_2}, \eqref{eq:H1_rho_bound}, we derive 
    \begin{equation}\label{eq:ho_est_tp3}
        \sup_{0 \le t \le T}\left\lVert \nabla \partial_{t}\rho \right\rVert _{L^{2}}+ \int^{T}_{0}\left\lVert \partial_{tt}\rho \right\rVert ^{2}_{L^{2}}+ \left\lVert \partial_{t}\rho \right\rVert ^{2}_{H^{2}}+ \left\lVert \rho -\bar{\rho}\right\rVert ^{2}_{H^{4}}dt\leq C.
    \end{equation}
    Finally, applying the gradient operator to $\eqref{eq:transformed_system}_1$ and combining \eqref{prop:ho_est_1}, \eqref{prop:ho_est_2} and \eqref{eq:ho_est_tp3}, we obtain 
    \begin{equation}
        \left\| \nabla \Delta \rho \right\| _ {L ^ {2}} \leq C \left( \left\| \nabla \partial_{t}\rho  \right\| _ {L ^ {2}} + \left\| \nabla^ {2} \rho \right\| _ {L ^ {2}} \| v \| _ {L ^ {\infty}} + \| \nabla \rho \| _ {L ^ {6}} \| \nabla v \| _ {L ^ {3}} + \| \rho \| _ {L ^ {\infty}} \| \nabla^ {2} v \| _ {L ^ {2}} \right)\leq C.
    \end{equation}That is 
    \begin{equation}
        \sup_{0 \le t \le T} \left\| \nabla^ {3} \rho \right\| _ {L ^ {2}} \leq C.
    \end{equation}
\end{proof}
\section{Proof of Theorem \ref{thm:global_existence_cauchy}}
We argue by contradiction. the maximal existence time satisfies $T^* < \infty$. The a priori estimates established in Propositions \ref{pro5.1}-\ref{pro5.3} guarantee that the solution remains uniformly bounded. Specifically, there exists a constant $C$ depending on $T ^ { * } ,$ $\gamma ,$ $\lVert \rho^{-1} _ { 0 } \rVert _ { L ^ { \infty } } ,$ $\lVert \rho _ { 0 } \rVert _ { H ^ { 3 } }$ and $\Vert v _ { 0 } \Vert _ { H ^ { 2 } }$ but independent of $T$ such that
 $$\sup_{0 \le t \le T} \left( \|\rho(t) - \bar{\rho}\|_{H^3} + \|v(t)\|_{H^2} \right) \le C < \infty,$$
 for any $0<T<T^*$. Utilizing the integrability of the time derivatives derived from the system $\rho-\bar{\rho} \in H^1(0, T; H^2) \cap L^2(0,T;H^4)$ and $v \in H^1(0, T^*; H^1) \cap L^2(0,T^*;H^3)$. This implies that for any $T < T^*$, the solution is continuous in time with values in the high-order spaces
 $$\rho - \bar{\rho} \in C([0, T]; H^3(\mathbb{R}^3)) \quad \text{and} \quad v \in C([0, T]; H^2(\mathbb{R}^3)).$$
 Therefore, we can define the value at the maximal time $T^*$ as
 $$(\rho(T^*), v(T^*)) := \lim_{t \to T^*} (\rho(t), v(t)).$$Crucially, this limit preserves the far-field behavior and regularity$$\rho(T^*) - \bar{\rho} \in H^3(\mathbb{R}^3), \quad v(T^*) \in H^2(\mathbb{R}^3),$$
 Since $\rho \in C([0,T];H^3)$ and $\underset{0\le t\le T}{\mathrm{sup}}\left\| \rho ^{-1} \right\| _{L^{\infty}}\le C$ ($C$ is independent of $T$).
 Thus, $\|{\rho(T^*)}^{-1}\|_{L^{\infty}} \le C$. Now we can extend the solution to $[T^*, T^* + \delta)$, which contradicts the maximality of $T^*$. Hence, $T^* = \infty$. The proof of uniqueness is standard and is therefore omitted. \par
For the critical 3D case $(\gamma=1)$ and the 2D case $(\gamma \ge 1)$, our method applies equally well to the proof of the density upper bound, although one may also refer to the approach in \cite{Yu-Wu}. Regarding the density lower bound, our technique remains valid in this case. Finally, utilizing these upper and lower density bounds, we derive high-order estimates to establish the global existence of solutions.
	\section*{Acknowledgments}
X. D. Huang is partially supported by CAS Project for Young Scientists in Basic Research
(Grant No. YSBR-031), NNSFC (Grant Nos. 12494542, 11688101) and National Key R\&D Program of China (Grant No. 2021YFA1000801). 

\vspace{1cm}
	\noindent\textbf{Data availability statement.} Data sharing is not applicable to this article.
	
	\vspace{0.3cm}
	\noindent\textbf{Conflict of interest.} The authors declare that they have no conflict of interest.

	\bibliographystyle{siam}
	
 \end{document}